\newtheorem{thm}{Theorem}[section]
\newtheorem{prop}[thm]{Proposition}
\newtheorem{lem}[thm]{Lemma}
\newtheorem{lemma}[thm]{Lemma}
\newtheorem{cor}[thm]{Corollary}
\newtheorem{fact}[thm]{Fact}
\theoremstyle{definition}
\newtheorem{definition}[thm]{Definition}
\newtheorem*{t1}{Theorem \ref{thm:ClosedFormula}}
\newtheorem*{mt2}{Theorem \ref{thm:RespectsOrdering}}
\newtheorem*{mt3}{Theorem \ref{thm:wdvv}}
\newtheorem{assumption}[thm]{Assumption}
\theoremstyle{remark}
\newtheorem{rmk}[thm]{Remark}
\newtheorem{remark}[thm]{Remark}
\newtheorem{ex}[thm]{Example} 
\newtheorem{notation}[thm]{Notation}
\newtheorem{claim}[thm]{Claim}
\newcommand{\trop}{\rm trop}
\DeclareMathOperator{\gl}{gl}
\DeclareMathOperator{\rank}{rank}
\newcommand{\Z}{\mathbb{Z}}
\newcommand{\C}{\mathbb{C}}
\newcommand{\Q}{\mathbb{Q}}
\renewcommand{\P}{\mathbb{P}}
\newcommand{\M}{M}
\newcommand{\abs}[1]{\left\lvert#1\right\rvert}
\newcommand{\Mbar}{\overline{M}}
\title{Genus-zero  $r$-spin theory}
\author[Cavalieri]{Renzo Cavalieri}
\address{
	\begin{tabular}{l}
		Renzo Cavalieri \\ Colorado State University, 1874 Campus Mail, Fort Collins, CO, 80523-1874, USA
	\end{tabular}
}
\email{renzo@math.colostate.edu}
\author[Kelly]{Tyler L. Kelly}
\address{
\begin{tabular}{l}
     Tyler L. Kelly\\
     School of Mathematics, University of Birmingham, Edgbaston, Birmingham B15 2TT, UK
\end{tabular}}
\email{t.kelly.1@bham.ac.uk}
\author[Silversmith]{Rob Silversmith}
\address{
  \begin{tabular}{l}
     Rob Silversmith\\
     Mathematics Institute, University of Warwick, Coventry CV4 7AL, UK
\end{tabular}}
\email{Rob.Silversmith@warwick.ac.uk}
\begin{document}

\begin{abstract}
    We provide an explicit formula for all primary genus-zero $r$-spin invariants. Our formula is piecewise polynomial in the monodromies at each marked point and in $r$. To deduce the structure of these invariants, we use a tropical realization of the corresponding cohomological field theories. We observe that the collection of all WDVV relations is equivalent to the relations deduced from the fact that genus-zero tropical CohFT cycles are balanced. 
\end{abstract}
\maketitle

\setcounter{tocdepth}{1}
\tableofcontents

\section{Introduction}

This paper achieves two distinct goals. The first is to establish a connection between the combinatorial properties of the genus-zero part of any cohomological field theory (CohFT) and of its tropicalization.
The second is to apply this analysis to the CohFT of $r$-spin Witten classes (see Section \ref{sec:RSpin} for a discussion) and obtain 
a detailed understanding of the corresponding numerical invariants. We first discuss this second goal, which allows for simple and concrete statements, and then enlarge the scope to discuss the more general combinatorial journey that led to these results.

A {\it genus-zero $r$-spin invariant} $w_r(\vec m)$, often denoted $\langle \tau^{m_1} \cdots \tau^{m_n}\rangle$ in the literature, is an intersection number on a moduli stack of $n$-pointed, genus-zero $r$-spin curves obtained by integrating the Euler class of the Witten bundle over a connected component of the moduli stack. Here $r$ is a positive integer, and $\vec m = (m_i)\in \{ 1, 2, \ldots , r\}^n$, with $\sum_{i=1}^nm_i = (n-2)(r+1)$, is called a {\it numerical monodromy vector}. These intersection numbers have a rich tradition: originally proposed by Witten \cite{Witten}, they were developed for many reasons including their CohFT structure \cite{JKV01}, their setting in mirror symmetry as an enumerative theory for the Landau-Ginzburg model $(\mathbb{C}, \mu_r, x^r)$ \cite{FJR11}, and their applications towards tautological relations \cite{PPZ2019}. Our first main result gives a closed formula for (primary) genus-zero $r$-spin invariants.

\begin{t1}
The genus-zero $r$-spin invariant of a numerical monodromy vector $\vec m=(m_1,\ldots,m_n)$ is
    \begin{align}\label{eq:ClosedFormulai}
        w_r(\vec m)=\frac{1}{2\cdot r^{n-3}}\sum_{\substack{S\subseteq[n]\\\sum_{i\in S}m_i\ge(\abs{S}-1)r+n-2}}(-1)^{1+\abs{S}}\prod_{k=1}^{n-3}\Bigl(\Bigl(\sum_{i\in S}m_i\Bigr)-(\abs{S}-1)r-k\Bigr).
    \end{align}
\end{t1}

Viewing $ m_1,\ldots,m_n,r$  as variables, one may think of $w_r(\vec m)$ as a function defined on the integral lattice points of an unbounded polyhedron $\mathbf{M}_n$ in $\mathbb{R}^{n}\times \mathbb{R}$.  Formula \eqref{eq:ClosedFormulai} implies that $w_r(\vec m)$ is a piecewise-polynomial function  of degree $n-3$. The chambers of polynomiality are polyhedra that overlap in affine linear strips of width 
$n-4$. The precise statements and the equations for the walls are given in {Corollary \ref{cor:PiecewisePolynomial}}. Previously, it was known that $w_r(\vec m)$ is a piecewise polynomial when $n=3,4$ \cite[Prop. 6.1]{JKV01}. It was also known that $w_r(\vec m)$ is equal to the dimension of $\mathfrak{sl}_2(\C)$-invariant subspace of a certain tensor product of symmetric powers of the standard representation of $\mathfrak{sl}_2(\C)$ \cite[Thm. 2]{PPZ2019}, and it is the genus-zero part of the r-KdV hierarchy \cite{JKV01}.

We also unveil properties of genus-zero $r$-spin invariants which are not apparent from their expression in \eqref{eq:ClosedFormulai}. For a fixed value of $r$, the {\it dominance order} is a partial ordering on the monodromy vectors $\vec m$ that, roughly speaking, says they become smaller as they get closer to the small diagonal of $\mathbb{R}^n$---equivalently, as the entries of $\vec m$ get more equidistributed (see Section~\ref{sec:monotonicity} for a precise definition). We show that genus-zero $r$-spin invariants are monotonic with respect to this partial ordering.

\begin{mt2}[Part (1)]
For any fixed $r,n$, the function $w_r(\vec m)$ is weakly order-reversing with respect to the dominance ordering.   That is, 
\begin{equation} \label{eq:monot}
  \vec m\le\vec m'   \ \ \ \implies  \ \ \ w_r(\vec m)\ge w_r(\vec m').
\end{equation}
\end{mt2}

In \cite[Prop. 1.4]{PPZ2019} the authors use character theory of $\mathfrak{sl}_2(\C)$ to show that for a monodromy vector of length $n$, $w_r(\vec m) = 0$ if $m_i\leq n-3$ for any of the entries of $\vec m$; we give a direct combinatorial proof of this fact. This vanishing, together with Theorem \ref{thm:RespectsOrdering} Part (1),  implies the following positivity statement.

\begin{mt2}[Part (2)]
For any $r$ and $\vec m$, we have
 $w_r(\vec m)\ge 0$, with
 $w_r(\vec m)\not =0$ if and only if $n-2\le m_i \le r-1$ for all $i$.
\end{mt2}

\noindent Notably, the formula for genus-zero $r$-spin invariants is more complicated than the closed formula for \emph{open} $r$-spin invariants found in \cite[Theorem 1.2]{BCT-OpenII}.

All these structure results about genus-zero $r$-spin invariants follow from a collection of linear recursive relations, proved in Theorem \ref{thm:Recursion}. Such relations may be derived in two equivalent ways, which leads us to discussing the first stated goal of the paper.

Genus-zero $r$-spin invariants are degrees of the zero-dimensional cycles for the CohFT of Witten classes, and comprise the \emph{numerical part} of the CohFT. Very roughly speaking, a CohFT (see \cite{PandhaCohCa} for precise definitions) is an infinite collection of Chow classes on moduli spaces of curves that are self-referential with respect to restriction to boundary strata; this means that the intersection of a CohFT class with a stratum $$
\Delta = \gl_\ast\left( \prod_i \overline{M}_{g_i, n_i}\right)$$ is equal to the pushforward via the corresponding gluing morphism of (a linear combination of products of pullbacks via the coordinate projections of) CohFT classes from the factors.
The basic linear equivalence among the three boundary points of $\overline{M}_{0,4}\cong \P^1$ may be pulled-back via forgetful morphisms and/or pushed-forward via gluing morphisms, then intersected with a CohFT class and integrated to obtain a collection of relations among the numerical CohFT classes. Such relations are called {\it WDVV relations} and are familiar tools in and around Gromov-Witten theory, where they are, for example, responsible for the associativity of the quantum product \cite{FuPa}. One way to obtain the recursions in Theorem \ref{thm:Recursion} is via WDVV. Another way is to study the {\it tropicalization} of the CohFT. We summarize here the discussion in Section \ref{sec:TropCohFT}.

Tropical geometry \cite{BIMS,MikhalkinICM} provides a combinatorialization of standard algebraic geometry concepts, and in the last few decades it has found interesting applications to enumerative geometry (e.g. \cite{MikhalkinEnumerative,GMcapoharris, GathmannKerberMarkwig}) and tautological intersection theory of moduli spaces of curves (e.g. \cite{MikhalkinModuli,RauModuli,KerberMarkwig2009,Gross, tropicalpsi}). The moduli space of tropical curves $M_{g,n}^{\trop}$ \cite{ACP, CCUW} may be identified with the boundary complex of $\overline{M}_{g,n}$. For any Chow class $\alpha \in A^k(\overline{M}_{g,n})$, its tropicalization $\alpha^{\trop}$ is a codimension-$k$ weighted cone subcomplex of $M_{g,n}^{\trop}$ where each codimension-$k$ cone of $M_{g,n}^{\trop}$ is weighted by the intersection number of $\alpha$ with the corresponding $k$-dimensional stratum of $\overline{M}_{g,n}$. If $\Omega$ is a CohFT class, then it follows from the strata restriction properties that the coefficients of $\Omega^{\trop}$ are functions of numerical CohFT invariants. In other words, the tropicalization of a CohFT is completely controlled by its numerical part (Proposition \ref{thm:TropicalCycle}).

In genus zero, the moduli space of curves is a {\it tropical compactification} \cite{Tev, GM}, meaning that $\overline{M}_{0,n}$ admits an embedding into a (non-proper) toric variety $X_\Sigma$ such that the boundary stratification of $\overline{M}_{0,n}$ coincides with the restriction of the toric boundary of $X_\Sigma$. It follows that $M_{0,n}^{\trop}$ may be identified with $\Sigma$, and  therefore viewed as a {\it balanced fan} inside the vector space $N_\mathbb{R}$, spanned by the cocharacter lattice of the torus of $X_\Sigma$. There is a natural isomorphism $\varphi: A^\ast(\overline{M}_{0,n}){\to} A^\ast(X_{\Sigma})$ 
and the tropicalization $\alpha^{\trop}$ of a cycle $\alpha\in A^\ast(\overline{M}_{0,n})$ coincides with the Minkowski weight presentation of $\varphi(\alpha)$ \cite{FuStu,Katz}.

This in turn implies that $\alpha^{\trop}$ satisfies the {\it balancing condition} \eqref{eq:balancingcondition}, which translates into a collection of linear equations on the weights of the cones of $\alpha^{\trop}$. When $\alpha$ is a CohFT class, one then obtains equations among the numerical CohFT invariants. Our next result relates these equations to the relations obtained from WDVV.  

\begin{mt3}
The collection of balancing equations for all tropical cycles $\Omega^{\trop}$ imposes the same constraints as all WDVV relations on the numerical invariants of $\Omega$.
\end{mt3}

The statement of this theorem can be made precise (but probably unnecessarily confusing) by saying that the two sets of equations cut down the same subvariety in some countable dimensional affine space coordinatized by the discrete invariants indexing the classes of \emph{arbitrary} CohFTs. It could also be explained in working terms: if one is trying to reconstruct numerical CohFT invariants recursively given some initial conditions, it is equivalent to use the set of relations coming from WDVV or the balancing equations. 
The proof of Theorem \ref{thm:wdvv} shows just  how tight the connection is: balancing at the cone point for one-dimensional tropical cycles corresponds to WDVV relations obtained by pull-back via forgetful morphisms, and balancing along faces of higher dimensional cycles correspond to WDVV relations pushed-forward via appropriate gluing morphisms.

The tropicalization of the moduli space of $r$-spin curves has been studied in \cite{CMP20, AbreuPaciniSecco}, where the authors identify a skeleton of the Berkovich analytification of algebraic $r$-spin curves with a cone complex parameterizing tropical $r$-spin curves. Our perspective is to directly  tropicalize the cycles of the $r$-spin CohFT, bypassing the need for a tropical version of the theory of Witten classes on moduli spaces of tropical $r$-spin curves. Once a theory of tropical vector bundles and Chern classes is established, it would be natural to try to exhibit the tropicalization of a genus-zero $r$-spin Witten class as the Euler class of a tropical version of the Witten bundle.  

The paper is written with the intention of being accessible to readers from different mathematical communities, in order to stimulate hopefully productive interactions. Section \ref{sec:RSpin} provides background and intuition on the CohFT of $r$-spin Witten classes, aimed at readers from tropical enumerative geometry. Section \ref{sec:TropCohFT} gives background on tropical intersection theory, aimed at algebraic geometers interested in intersection theory on moduli spaces.
Section \ref{sec:recursivestructure} derives recursions among genus-zero $r$-spin numerical invariants, and these recursions are used in Section \ref{sec:ClosedFormula} to compute the closed formula \eqref{eq:ClosedFormulai}.
In Section \ref{sec:monotonicity} we study the monotonicity of genus-zero $r$-spin invariants with respect to the dominance ordering.

\subsection*{Acknowledgements}
The authors would like to thank Alexander Barvinok, Alessandro Chiodo,  Hannah Markwig and Diane Maclagan for discussions relating to this work. We also thank the referee for their detailed comments and feedback that have improved the paper. The first author is grateful for support from Simons Collaboration Grant 420720 and  NSF grant DMS-2100962.
The second author acknowledges support provided by the EPSRC under grant EP/N004922/2, the UK Research and Innovation (UKRI) Talent and Research Stabilisation Fund, and the UKRI Future Leaders Fellowship MR/T01783X/1. The second and third authors would also like to thank the hospitality of Colorado State University during their visit. 
Lastly, all the authors would like to thank the Instituto Nacional de Matemática Pura e Aplicada (IMPA) for their hospitality; we discovered Theorem~\ref{thm:ClosedFormula} while participating in the conference ``ALGA XV.'' 

\section{The $r$-spin cohomological field theory}\label{sec:RSpin}
We are interested in computing $r$-spin invariants, which are defined as intersection numbers over moduli spaces of roots of line bundles on curves.  The literature on $r$-spin curves is both technical and full of conflicting conventions, hence we provide in this section a self-contained overview of this story. Since our focus is combinatorial, we sweep technicalities under the rug when possible. 

\subsection{Moduli of $r$-spin curves}
 Fix an integer $r\ge 2$. Let $(C, p_1, \dots, p_n)\in\M_{g,n}$ be a smooth $n$-marked curve of genus $g$, and fix $m_1, \dots, m_n \in \{1, \dots, r\}$ such that
\begin{equation}\label{divisibility}
2g - 2 + n - \sum_{i=1}^n m_i \in r\mathbb{Z}.
\end{equation} An \emph{$r$-spin structure of type $\vec{m} := (m_1, \dots, m_n)$} on $C$ is a line bundle $L$ on $C$ together with an isomorphism
$$
\varphi: L^{\otimes r} \stackrel{\sim}{\longrightarrow} \omega_{C, \textup{log}}\left(-\sum_{i=1}^n m_i p_i\right)
$$
where $\omega_{C, \textup{log}}=\omega_C(\sum_{i=1}^n p_i)$ is the log canonical bundle 
of $C$. We  refer to $m_i$ as the \emph{monodromy} of $L$ at $p_i$; the terminology arises from an equivalent formulation of $r$-spin curves in which $p_1,\ldots,p_n$ are orbifold points of $C$, and $m_i$ is the monodromy of $L$ at $p_i$. We call $\vec m$ (subject to the condition \eqref{divisibility}) a \emph{monodromy vector}, and call the data $(C, p_i, L, \varphi)$ a (smooth) \emph{$r$-spin curve}.

\begin{remark}\label{rem:offbyone}
The integers $m_i$ are off by one from some standard references on $r$-spin curves (e.g., \cite{JKV01,BCT-Extended,PPZ2019}), but match others (e.g., \cite{Chi08a,FJR11, FJR13}).
\end{remark}

For an $r$-spin curve of type $\vec m$, by the existence of $\varphi$, we immediately see
\begin{equation}\label{deg L}
\deg L = \frac{2g - 2 + n - \sum_{i=1}^n m_i}{r} \in \Z.
\end{equation}

We denote by $\M_{g,n}^{r}(\vec{m})$ the moduli stack of smooth $r$-spin curves of type $\vec{m}$, and let $\M_{g,n}^r=\bigsqcup_{\vec m}\M_{g,n}^{r}(\vec{m})$. There is a natural compactification $\Mbar_{g,n}^r=\bigsqcup_{\vec m}\Mbar_{g,n}^{r}(\vec{m})$ due to Abramovich-Jarvis and Chiodo \cite{AJ03, Chi08}---though other compactifications exist \cite{Jar98, Jar00, CCC07}. In the language of \cite{Chi08}, the stack $\Mbar_{g,n}^r(\vec m)$ is the moduli stack of \emph{stable $r$-spin curves}, i.e. $r$-th roots $L$ of $\omega_{C,\log}$ on nodal orbicurves $C$ with $\Z/r\Z$-orbifold structure at marked points and nodes (and nowhere else), where $m_i$ is the monodromy of $L$ at $p_i$. This space admits a (finite flat surjective) forgetful map $\rho:\Mbar_{g,n}^{r}(\vec{m}) \to \Mbar_{g,n}$, see \cite[Thm. 4.2.3]{Chi08}. 
\begin{remark}\label{rem:GenusZeroForget}
    When $g=0$ (and necessarily $n\ge3)$, the forgetful map $\rho$ is a bijection. It is not an isomorphism for stack-theoretic reasons---due to the fact that $r$-spin curves admit extra automorphisms---but we can largely ignore this subtlety for our purposes. 
\end{remark} 

An important aspect of $\Mbar_{g,n}^r(\vec m)$---similarly to $\Mbar_{g,n}$---is its recursive structure, which we briefly describe. 
On a stable $r$-spin curve $(C,p_i,L,\varphi)$, each node $\eta$ of $C$ locally looks like the quotient of an ordinary node $V(xy)$ by an action of $\Z/r\Z$ via $$(x,y) \mapsto (\zeta x, \zeta^{-1}y)$$ where $\zeta$ is an $r$-th root of unity. Under this identification, $L$ is locally the quotient of $V(xy)\times\C$ by $\Z/r\Z$, acting by $$(x,y,t)\mapsto(\zeta x, \zeta^{-1}y,\zeta^a t)$$ for some $a\in\{1,\ldots,r\}.$ 
Exchanging the roles of $x$ and $y$ would instead give an action $t \mapsto \zeta^b t$ where $a+ b = 0 \pmod r$. We thus have a well-defined notion of the monodromy at $\eta$, \emph{after picking a branch of the node}. 

\begin{ex}\label{ex: codim 1 stratum}
Consider a genus-$g$ stable $r$-spin curve $(C, p_1, \dots, p_n, L, \varphi)$ of type $(m_1, \dots, m_n)$, with exactly two irreducible components $C_1$ and $C_2$ of genera $g_1$ and $g_2$, joined at a single node $\eta$. Denote by $J\subseteq[n]$ the set of marked points on $C_1$ (so $J^c$ is the set of marked points on $C_2$). We additionally mark the two preimages $p_J\in C_1$ and $p_{J^c}\in C_2$ of $\eta$ under the normalization map, and pull back $L$ to each component. This yields two smooth $r$-spin curves $$(C_1,\{p_i\}_{i\in J}\cup\{p_J\},L,\varphi)\quad\quad\text{and}\quad\quad(C_2,\{p_i\}_{i\in J^c}\cup\{P_{J^c}\},L,\varphi)$$ with monodromies $m_J,m_{J^c}\in\{1,\ldots,r\}$ at $p_J$ and $p_{J^c}$ respectively, where\footnote{There is a small subtlety here involving the dimension of the Witten class when $m_J=r$, which we may ignore by Proposition \ref{prop:RamondVanishing} below. See \cite[Sec. 3.2]{BCT-Extended} for details.}
\begin{equation}\label{monodromy at nodes higher genus}
    m_J \equiv 2g -2+(|J|+1) - \sum_{i\in J} m_i \pmod r \quad\text{ and }\quad m_{J^c} \equiv 2g - 2 + (|J^c|+1) - \sum_{i\in J^c} m_i \pmod r.
\end{equation}
\end{ex}
Example \ref{ex: codim 1 stratum} illustrates the \emph{recursive} 
boundary stratification  of $\Mbar_{g,n}^r(\vec m)$, which compatibly matches the well-known recursive boundary stratification of $\Mbar_{g,n}$. If $D_{g_1,g_2,J}^r\subseteq\Mbar_{g,n}^r(\vec m)$ denotes 
the divisor of stable $r$-spin curves generically of the type in the example, we have a bijection $$D_{g_1,g_2,J}^r\to\Mbar_{g_1,\abs{J}+1}^r(\{m_i\}_{i\in J}\cup\{m_J\})\times\Mbar_{g_2,\abs{J^c}+1}^r(\{m_i\}_{i\not\in J}\cup\{m_{J^c}\}).$$ The bijection is compatible with the decomposition of the corresponding boundary divisor $D_J\subseteq\Mbar_{g,n}$ as a product\footnote{If $g_1=g_2$ and $n=0$ one must take a quotient of the product, and the same is true for $r$-spin curves above.} $D_J\cong\Mbar_{g_1,\abs{J}+1}\times\Mbar_{g_2,\abs{J^c}+1}.$ (The bijection is not  an isomorphism because the stack structure on the two sides is different.)
\begin{ex}\label{ex: another codimension 1 stratum}
    We may similarly consider the locus in $\Mbar_{g,n}^r(\vec m)$ associated to any fixed combinatorial type of curve (i.e. fixed \emph{dual graph} $\Gamma$). We may carry out a similar analysis to that in Example \ref{ex: codim 1 stratum}, again via the normalization map, with one caveat, illustrated in an example as follows. If $D^{\mathrm{loop}}\subseteq\Mbar_{g,n}^r(\vec m)$ denotes  the closure of the locus of stable $r$-spin curves $C$ with one irreducible component with a self-node $\eta$ (so $\Gamma$ has a single vertex with a self-loop), then the monodromies at the branches of $\eta$ are not uniquely determined; we instead get a bijection $$D^{\mathrm{loop}}\to\bigsqcup_{k=1}^r\Mbar_{g-1,n+2}^r(\vec m\cup\{k,r-k\}).$$
\end{ex}
Adapting the ideas from  Examples \ref{ex: codim 1 stratum} and \ref{ex: another codimension 1 stratum}, one can characterize any stratum in $\Mbar_{g,n}^r(\vec m)$ recursively.

\subsection{Genus-zero Witten classes and $r$-spin invariants}

In this section we describe the natural class of intersection numbers on $\Mbar_{g,n}^r(\vec m)$ we are interested in,  restricting to the case $g=0$. 
The \emph{Witten bundle} is the derived pushforward
$$
\mathcal{W}_{0,n}^r = (\mathbf{R}^1 \pi_* \mathcal{L}_{0,n}^r)^\vee,
$$ where $\mathcal{L}_{0,n}^r$ is the universal line bundle on the universal curve $\mathcal{C}_{0,n}^r \stackrel{\pi}{\rightarrow} \Mbar_{0,n}^r$.
We denote by $\mathcal{W}_{0,n}^r(\vec{m})$ the restriction of the Witten bundle to $\Mbar_{0,n}^r(\vec{m})\subseteq\Mbar_{0,n}^r$.

A standard argument (see \cite[Prop. 4.4]{JKV01}) shows that $\mathbf{R}^0 \pi_* \mathcal{L}_{0,n}^r=0$. This fact depends crucially on having $m_i\ge1$ for all $i$. Applying Riemann-Roch and~\eqref{deg L} yields
\begin{align}\label{eq:RankOfWittenBundle}
\rank\mathcal{W}_{0,n}^r(\vec{m}) = -1 +\frac{ 2 - n + \sum_i m_i}{r}.
\end{align}
\begin{definition}\label{def:Witten class}
    The (genus-zero) $r$-spin \emph{Witten class} with monodromy vector $\vec m$ is $$W_r(\vec m)=r\cdot\rho_*\left(e(\mathcal{W}_{0,n}^{r}(\vec m))\right)\in A^*(\Mbar_{0,n}),$$ where $e$ denotes the Euler class, and $\rho:\Mbar_{0,n}^r(\vec m)\to\Mbar_{0,n}$ is the (bijective, by Remark \ref{rem:GenusZeroForget}) forgetful map. 
    The (primary, genus-zero) \emph{$r$-spin invariant} associated to the monodromy vector $\vec{m}=(m_1, \dots, m_n)$ is
$$
w_r(\vec m):= \int_{[\Mbar_{0,n}]}W_r(\vec m)=r\cdot\int_{[\Mbar_{0,n}^r(\vec m)]}e(\mathcal{W}_{0,n}^{r}(\vec m)).
$$ 

\end{definition}

In the literature, $w_r(\vec m)$ is often denoted $\langle \tau^{m_1} \cdots \tau^{m_n} \rangle$. Note that $w_r(\vec m)\ne0$ only if $\textrm{rank}\ \mathcal{W}_{0,n}^r(\vec{m})=\dim\Mbar_{0,n}^r(\vec m)=n-3$, or equivalently, only if
\begin{equation}\label{eqn: dim rank match}
\sum_i m_i = (n-2)(r+1).
\end{equation}
In this case we say $\vec m$ is \emph{numerical}. This is the case when $W_r(\vec m)$ is a zero-dimensional cycle on $\Mbar_{0,n}^r(\vec w)$; in general, $W_r(\vec m)$ is a cycle of dimension $\frac{1}{r}((n-2)(r+1)-\sum_im_i$).

\begin{rmk}
    The factor of $r$ in Definition \ref{def:Witten class} is essentially a matter of convention---see \cite[Cor. 3.9]{JKV01} and the succeeding discussion.
\end{rmk}

\begin{rmk}\label{rmk: rspinhistory}
    The Witten bundle, Witten class, and $r$-spin invariants may also be defined in higher genus; we briefly mention some of the history. Jarvis, Kimura and Vaintrob first gave a collection of axioms for---but not a construction of---a \emph{virtual $r$-spin class}, as a collection of cohomology classes on each boundary stratum of each moduli space $\Mbar_{g,n}^r(\vec m)$ of $r$-spin curves \cite[Def. 4.1]{JKV01}. They proved that any virtual $r$-spin class defines a \emph{cohomological field theory (CohFT)} (see Section \ref{sec:RSpinProperties}) by pushing forward to $\Mbar_{g,n}$, and also that 
    any virtual $r$-spin class agrees with the Witten class  defined above in genus zero \cite[Rem. 4.2.4]{JKV01}. 
    
    In \cite[Sec. 1.3]{Witten}, Witten had earlier outlined an analytic construction of $r$-spin invariants. 
    Fan, Jarvis, and Ruan \cite{FJR11,FJR13} carried out and generalized this construction, and proved it defines a virtual $r$-spin class in the sense of \cite{JKV01}. 
    As desired in \cite[Rem. 4.2.5]{JKV01}, an algebraic construction soon followed, constructed by Polishchuk and Vaintrob \cite{PV11} using matrix factorizations, which has been generalised to enumerative theories associated to general gauged linear sigma models \cite{CFGKS18, FK20}.
\end{rmk}

\subsection{Properties of $r$-spin classes and numerical invariants}\label{sec:RSpinProperties}
We now list the properties of the classes $W_r(\vec m)$ that we will need. Propositions \ref{ax:RestrictToStratum}--\ref{prop:34Point} come directly from \cite{JKV01}.

Fix a monodromy vector $\vec m.$ Recalling Example \ref{ex: codim 1 stratum}, let $D_J\subseteq\Mbar_{0,n}$ be the closure of the locus of curves with two components $C_1$ and $C_2$, connected at a node $\eta$, with $C_1$ containing the marks in $J$. As before, let $p_J\in C_1$ and $p_{J^c}\in C_2$ be the preimages of $\eta$ under normalization, and define $m_J,m_{J^c}\in\{1,\ldots,r\}$ by:
\begin{align}\label{monodromy at nodes}
m_J&\equiv(\abs{J}-1)(r+1)-\sum_{i\in J}m_i\pmod r&m_{J^c}&\equiv(\abs{J^c}-1)(r+1)-\sum_{i\in J^c}m_i\pmod r.
\end{align}
Note that \eqref{monodromy at nodes} is equivalent to \eqref{monodromy at nodes higher genus} when $g=0$. The particular form of \eqref{monodromy at nodes} will be convenient, see Proposition \ref{prop:1DVanish}.

\begin{prop}[{\cite[Axiom C2, Cor. 3.9]{JKV01}}]\label{ax:RestrictToStratum}
We have $$W_r(\vec m)|_{D_J}=W_r((m_i)_{i\in J}\cup\{m_J\})\boxtimes W_r((m_i)_{i\in J^c}\cup\{m_{J^c}\}),$$ where $\boxtimes$ denotes the product of pullbacks from the factors of $D_J\cong\Mbar_{0,\abs{J}+1}\times\Mbar_{0,\abs{J^c}+1}$.
\end{prop}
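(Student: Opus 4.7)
The plan is to lift the statement to $\Mbar_{0,n}^r(\vec m)$ and prove it there for the unpushed class $\widetilde W_r(\vec m)$, then push forward via the forgetful map $\rho$. The key input is the recursive structure described in Example \ref{ex: codim 1 stratum}: in genus zero, the preimage $\widetilde D_J^r := \rho^{-1}(D_J)$ is identified (up to the stacky subtlety noted in Remark \ref{rem:GenusZeroForget}) with the product $\Mbar_{0,\abs J+1}^r(\vec m_J) \times \Mbar_{0,\abs{J^c}+1}^r(\vec m_{J^c})$, where $\vec m_J = (m_i)_{i\in J}\cup\{m_J\}$, $\vec m_{J^c} = (m_i)_{i\in J^c}\cup\{m_{J^c}\}$, and the nodal monodromies are determined by~\eqref{monodromy at nodes}.

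The core step is to compute the restriction of the Witten bundle to $\widetilde D_J^r$. For a stable $r$-spin curve parameterized by a point of $\widetilde D_J^r$, with node $\eta$ and normalization $\nu: \widetilde C \to C$, the normalization sequence $0 \to L \to \nu_* \nu^* L \to L|_\eta \to 0$ degenerates to an isomorphism $L \cong \nu_* \nu^* L$: in the non-Ramond case ($m_J, m_{J^c} \in \{1,\ldots,r-1\}$), the fiber of $L$ at the orbifold node has no $\Z/r$-invariants, so the quotient sheaf vanishes. Taking derived pushforward to the universal family over $\widetilde D_J^r$ and dualizing gives a direct sum decomposition of Witten bundles
$$\mathcal{W}_{0,n}^r(\vec m)|_{\widetilde D_J^r} \cong p_1^* \mathcal{W}_{0,\abs J+1}^r(\vec m_J) \oplus p_2^* \mathcal{W}_{0,\abs{J^c}+1}^r(\vec m_{J^c}),$$
where $p_1, p_2$ are the two projections of $\widetilde D_J^r$ onto its factors. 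Multiplicativity of the Euler class under direct sums then yields $\widetilde W_r(\vec m)|_{\widetilde D_J^r} = \widetilde W_r(\vec m_J) \boxtimes \widetilde W_r(\vec m_{J^c})$.

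Finally I push forward along $\rho$. Since $\rho$ is finite and flat, and its restriction to $\widetilde D_J^r$ is compatible with the product decomposition (the forgetful map on each factor of $\Mbar_{0,\abs J+1}^r\times \Mbar_{0,\abs{J^c}+1}^r$ is the relevant $\rho$ on that factor), the projection formula converts the factorization on $\widetilde D_J^r$ into the claimed factorization of $W_r(\vec m)|_{D_J}$. The principal technical obstacle is justifying the orbifold normalization computation rigorously enough in families to produce a global direct sum decomposition of vector bundles, rather than merely a fiberwise statement; the potential Ramond case $m_J = r$, which would obstruct the normalization isomorphism, does not arise here thanks to the Ramond vanishing result Proposition \ref{prop:RamondVanishing} (referenced in the footnote to Example \ref{ex: codim 1 stratum}), allowing us to restrict attention to the non-Ramond case throughout.
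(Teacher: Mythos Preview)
The paper does not prove this proposition; it is stated as a known fact, citing \cite[Axioms~1--3]{JKV01} and noting just before the statement that the axioms hold for $\widetilde W_r(\vec m)$ by \cite[Thm.~4.5]{JKV01}. So there is no ``paper's own proof'' to compare against.

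Your sketch is essentially the standard geometric argument behind the cited result: the normalization exact sequence for $L$ at a non-Ramond node loses its cokernel because the $\Z/r\Z$-representation on the fiber at the orbifold node is nontrivial, so $R^1\pi_*\mathcal L$ splits over the two branches, and Euler classes are multiplicative on direct sums. This is correct in outline and is how the factorization is established in the literature. Two points deserve more care if you want this to stand as a full proof rather than a sketch. First, the identification of $\widetilde D_J^r$ with the product of smaller $r$-spin moduli spaces is only a bijection, not an isomorphism of stacks (as the paper notes after Example~\ref{ex: codim 1 stratum}): the gluing map has degree $r$, coming from the $r$ choices of isomorphism between the two $r$-th roots at the node, while each forgetful map $\rho$ is a $\mu_r$-gerbe. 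These degree factors must be tracked through the pushforward step; they conspire to cancel, but your invocation of ``the projection formula'' glosses over this. Second, the normalization argument needs to be run for the universal curve over the boundary divisor, not fiberwise, to yield a genuine short exact sequence of sheaves and hence a splitting of vector bundles; you flag this yourself as the ``principal technical obstacle,'' and indeed it is where the actual work lies.
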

\begin{remark}
    In genus zero, Proposition \ref{ax:RestrictToStratum} is essentially the defining property of a cohomological field theory, so we'll refer to it as the \emph{CohFT property}. See Section \ref{sec:TropCohFT} for further discussion.
\end{remark}

\begin{prop}[{\cite[Axiom 4]{JKV01}}]\label{prop:RamondVanishing}
    If $\vec m$ is a monodromy vector and $m_i = r$ for some $i$, then $W_r(\vec m) = 0$.
\end{prop}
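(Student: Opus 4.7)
The statement is exactly Axiom 4 of the axiomatic characterization of virtual $r$-spin classes in \cite[Def. 4.1]{JKV01}, and is established for the Witten class $\widetilde W_r(\vec m)$ by \cite[Thm. 4.5]{JKV01}; the quickest route is simply to appeal to those references. Since the authors have prefaced the proposition by saying it ``comes directly from the axioms,'' I will sketch the geometric mechanism that a direct proof would use.

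Geometrically, $m_i = r$ corresponds to trivial monodromy of $L$ at $p_i$ (since $r \equiv 0 \pmod r$), so $L$ is locally an honest (non-orbifold) line bundle in a neighborhood of $p_i$. My plan is to exhibit a trivial quotient line bundle of the Witten bundle $\mathcal{W}^r_{0,n}(\vec m)$, which forces its top Chern class---and hence its Euler class---to vanish. Using Grothendieck--Serre duality I would identify $\mathcal{W}^r_{0,n}(\vec m) \cong \pi_*(\omega_\pi \otimes \mathcal{L}^{-1})$, so that fiber-wise a section of the Witten bundle is an element of $H^0(C, \omega_C \otimes L^{-1})$. Letting $\sigma_i$ denote the $i$-th section of the universal curve, I would then consider the short exact sequence
$$
0 \to (\omega_\pi \otimes \mathcal{L}^{-1})(-\sigma_i) \to \omega_\pi \otimes \mathcal{L}^{-1} \to \sigma_{i*}\sigma_i^*(\omega_\pi \otimes \mathcal{L}^{-1}) \to 0,
$$
push forward via $\pi$, and use the triviality of the monodromy at $p_i$ to extract a surjection from the Witten bundle onto a trivial line bundle (equivalently, a nowhere-vanishing cosection).

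The main obstacle is verifying two things: that the pushforward restriction map is surjective on sections (which should follow from an $H^1$-vanishing on each fiber via Riemann--Roch, crucially using the hypothesis $m_i = r$), and that the resulting quotient line bundle on $\Mbar^r_{0,n}(\vec m)$ is canonically trivial. These checks are the technical heart of the argument, and ultimately reduce to the local analysis at a Ramond puncture already present in \cite{JKV01}. A perhaps cleaner alternative route is the Polishchuk--Vaintrob matrix-factorization construction of the Witten class \cite{PV11}, in which the Ramond sector ($m_i \equiv 0 \pmod r$) corresponds to a degenerate component of the two-periodic complex computing the virtual class, making the vanishing manifest at the cochain level.
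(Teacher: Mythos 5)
Your primary route---citing \cite[Axiom 4, Thm.\ 4.5]{JKV01}---is exactly what the paper does: the authors state the proposition without proof, noting that it ``comes directly from the axioms of virtual $r$-spin classes in \cite{JKV01}.'' The additional geometric sketch you give (Serre duality identifying $\mathcal{W}^r_{0,n}(\vec m)\cong\pi_*(\omega_\pi\otimes\mathcal{L}^{-1})$, then extracting a canonically trivial quotient line bundle by restricting to the Ramond section $\sigma_i$) is a faithful outline of the mechanism underlying the reference; one small calibration is that the $H^1$-vanishing needed for surjectivity of the restriction map, namely $H^0(C, L(p_i))=0$, follows because the hypothesis $m_i=r$ together with divisibility forces $\deg L \le -2$, so $\deg L(p_i)<0$---whereas the hypothesis $m_i=r$ is used more essentially in identifying $\sigma_i^*(\omega_\pi\otimes\mathcal{L}^{-1})$ with the trivial bundle via the residue map (this is exactly where the Ramond condition kills the orbifold twist at $p_i$). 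Since you explicitly defer these checks to \cite{JKV01} rather than claim them, there is no gap; the sketch is a helpful gloss on the citation that the paper itself leaves implicit.
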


\begin{remark}
    In the physics literature, a marked point $p_i$ is referred to as \emph{Ramond} if $m_i=r$ (and  \emph{Neveu-Schwarz} otherwise), hence Proposition~\ref{prop:RamondVanishing} is often referred to as \emph{Ramond vanishing}.
\end{remark}
The following follows from \cite[Axiom 5]{JKV01}.
\begin{prop}\label{prop:Insertion1Vanish}
   Let $\vec m$ be a numerical monodromy vector of length $n\ge 4$ with $m_i =1$ for some $i$. Then $w_r(\vec m) = 0$.
\end{prop}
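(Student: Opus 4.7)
The plan is to reduce to a dimensional vanishing on a lower-dimensional moduli space via the forgetful axiom (Axiom 5 of \cite{JKV01}). Without loss of generality, assume $m_n=1$, and let $\pi\colon\Mbar_{0,n}\to\Mbar_{0,n-1}$ be the map forgetting the last marked point (which is well-defined since $n\ge4$). In the conventions of this paper, the monodromy value $m_i=1$ corresponds to the ``trivial'' insertion (it is off-by-one from the ``flat'' or ``untwisted'' marking in \cite{JKV01}, cf.\ Remark \ref{rem:offbyone}), and the forgetful axiom asserts that the Witten class pulls back:
\[
W_r(m_1,\ldots,m_{n-1},1)=\pi^*\,W_r(m_1,\ldots,m_{n-1}).
\]

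First I would verify that $\vec m':=(m_1,\ldots,m_{n-1})$ is an admissible monodromy vector for $\Mbar_{0,n-1}^r$. The numerical condition \eqref{eqn: dim rank match} for $\vec m$ gives $\sum_{i=1}^n m_i=(n-2)(r+1)$, so $\sum_{i=1}^{n-1} m_i=(n-2)(r+1)-1$. Subtracting the divisibility condition \eqref{divisibility} for $\vec m'$ on $\Mbar_{0,n-1}$ (which reads $-1+(n-1)-\sum m_i'\in r\Z$), we compute $n-2-\bigl((n-2)(r+1)-1\bigr)=-(n-2)r+1-1=-(n-2)r\in r\Z$, so $\vec m'$ satisfies the divisibility condition and the axiom applies.

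Next, I would compute the rank of the Witten bundle $\mathcal W_{0,n-1}^r(\vec m')$ using \eqref{eq:RankOfWittenBundle}:
\[
\rank\mathcal W_{0,n-1}^r(\vec m')=-1+\frac{2-(n-1)+\bigl((n-2)(r+1)-1\bigr)}{r}=-1+\frac{(n-2)r}{r}=n-3.
\]
Since $\dim\Mbar_{0,n-1}=n-4$ and the Euler class of a vector bundle of rank strictly greater than the dimension of the base lies in a vanishing cohomology group, we conclude $W_r(\vec m')=e(\mathcal W_{0,n-1}^r(\vec m'))=0$ in $A^{n-3}(\Mbar_{0,n-1})=0$. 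Therefore $W_r(\vec m)=\pi^*W_r(\vec m')=0$, and integrating gives $w_r(\vec m)=0$.

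The only potential obstacle is checking that Axiom 5 of \cite{JKV01} applies cleanly in our conventions; once the off-by-one reindexing (Remark \ref{rem:offbyone}) is matched up, the rest is a rank-versus-dimension count. The hypothesis $n\ge4$ is exactly what ensures that the target $\Mbar_{0,n-1}$ of the forgetful map has nonnegative dimension $n-4$ strictly below the bundle rank $n-3$.
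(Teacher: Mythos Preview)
Your approach is correct and is exactly what the paper intends: it simply cites Axiom 5 of \cite{JKV01} (the forgetful/unit axiom), and you have spelled out the resulting rank-versus-dimension argument. One small slip: the divisibility condition \eqref{divisibility} for $\vec m'$ on $\Mbar_{0,n-1}$ reads $-2+(n-1)-\sum m_i'\in r\Z$, not $-1+(n-1)-\sum m_i'$; your final answer $-(n-2)r$ is nonetheless correct because a second off-by-one in your arithmetic cancels the first.
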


\begin{prop}[{\cite[Prop. 6.1]{JKV01}}]\label{prop:34Point} The 3-point and 4-point $r$-spin invariants are as follows:
\begin{enumerate}
    \item For any numerical $r$-spin monodromy vector $\vec m=(m_1,m_2,m_3)$, we have $w_r(\vec m)=1.$
    \item \label{item:4pt} For any numerical $r$-spin monodromy vector $\vec m=(m_1,m_2,m_3,m_4)$, we have $$w_r(m_1,m_2,m_3,m_4)=
\frac{1}{r}\min(m_1-1,m_2-1,m_3-1,m_4-1,r-m_1,r-m_2,r-m_3,r-m_4).$$
\end{enumerate}
\end{prop}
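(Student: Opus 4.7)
The plan for Part (1) is a quick definition chase. When $n=3$, the space $\overline{M}_{0,3}$ is a point, and a numerical 3-vector satisfies $\sum_i m_i = r+1$, so the rank formula \eqref{eq:RankOfWittenBundle} gives $\textup{rank}\,\mathcal{W}^r_{0,3}(\vec m) = 0$ and hence $\widetilde W_r(\vec m) = 1$. The coarse space of $\overline{M}_{0,3}^r(\vec m)$ is a single point, since $\textup{Pic}(\mathbb{P}^1)=\Z$ is torsion-free and the $r$-th root of the relevant line bundle is unique; the identity $w_r(\vec m)=1$ then reflects the standard normalization of stacky integration on the $\mu_r$-gerbe, equivalently the unit axiom of the CohFT.

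For Part (2), the goal is to compute the degree of the Witten line bundle $\mathcal{W}^r_{0,4}(\vec m)$. By Remark~\ref{rem:GenusZeroForget} one may view this as a line bundle on (a stack with coarse moduli) $\overline{M}_{0,4}\cong\mathbb{P}^1$. My primary plan is to apply Grothendieck--Riemann--Roch (equivalently, Chiodo's explicit formula) to the universal family $\pi:\mathcal{C}\to\overline{M}^r_{0,4}(\vec m)$ to express $c_1(\mathcal{W}^r_{0,4}(\vec m)) = -\textup{ch}_1(R\pi_* \mathcal{L})$ as a linear combination of $\psi$-classes and boundary divisors, with coefficients quadratic in $m_i/r$ coming from Bernoulli polynomials. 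Integrating against the fundamental class of $\overline{M}^r_{0,4}(\vec m)$ then reduces to evaluating a handful of intersection numbers on $\mathbb{P}^1$.

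The hard part --- and the source of the piecewise-linear $\min$ in the final answer --- is the interplay between the nodal monodromy formula \eqref{monodromy at nodes} and Ramond vanishing (Proposition~\ref{prop:RamondVanishing}). The integers $m_J\in\{1,\ldots,r\}$ are defined by a mod-$r$ reduction, and whether $m_J = r$ (in which case the corresponding boundary contribution vanishes) depends on a linear inequality among the $m_i$. These inequalities partition the numerical locus into chambers on which $w_r(\vec m)$ is linear, and a careful case analysis at the three boundary points $D_{12,34}, D_{13,24}, D_{14,23}$ of $\overline{M}_{0,4}$ reproduces the eight linear quantities $\{m_i-1\}$ and $\{r-m_i\}$ appearing in the $\min$.

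A lighter-weight alternative, which I would actually try first, is to bootstrap using only the axioms already at hand: (i) Propositions~\ref{prop:RamondVanishing} and~\ref{prop:Insertion1Vanish} immediately give $w_r(\vec m)=0$ whenever some $m_i\in\{1,r\}$; (ii) the involution $m_i\mapsto r+1-m_i$ (induced geometrically by $L\mapsto\omega_{C,\log}\otimes L^{\vee}$ and Serre duality) swaps the two families $\{m_i-1\}$ and $\{r-m_i\}$, so the answer should be invariant under it; (iii) pulling back to $\overline{M}_{0,5}$ the linear equivalence of the three boundary points of $\overline{M}_{0,4}$ along a forgetful morphism and applying Proposition~\ref{ax:RestrictToStratum} yields linear WDVV-style relations among the 4-point invariants, which together with (i) should pin down the formula uniquely, after which one checks the claimed expression satisfies all constraints.
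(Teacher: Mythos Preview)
The paper does not prove this proposition; it is cited without argument from \cite[Prop.~6.1]{JKV01}, so there is no proof here to compare against. Your Part~(1) sketch and your primary GRR/Chiodo plan for Part~(2) are essentially what \cite{JKV01} does, with one correction: the piecewise-linear $\min$ arises from the mod-$r$ reduction in \eqref{monodromy at nodes} (which residue $m_J$ lands in) feeding into the boundary coefficients of the GRR expression, not from Ramond vanishing per se.

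Two cautions about your alternative axiomatic route. First, the map $L\mapsto\omega_{C,\log}\otimes L^{\vee}$ does not send $r$-spin structures to $r$-spin structures for $r>2$: one computes $(\omega_{\log}\otimes L^{\vee})^{\otimes r}=\omega_{\log}^{r-1}(\sum_i m_ip_i)$, which is not of the required form, so the symmetry $m_i\mapsto r+1-m_i$ (which is genuine for the four-point formula) needs a different justification. Second, beware circularity within this paper: Theorem~\ref{thm:Recursion} is proved \emph{using} Proposition~\ref{prop:34Point} --- the four-point invariants $w_r(m_2,m_k,2,2r-m_2-m_k)$ are evaluated explicitly in its proof --- so you cannot invoke that recursion here. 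You would have to work directly with the raw five-point WDVV relation and independently argue that it, together with the vanishing in (i), determines all four-point numbers.
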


\begin{prop}\label{prop:1DVanish}
Let $\vec m=(m_1,\ldots,m_n)$ be an $r$-spin monodromy vector with $n>4$ such that $W_r(\vec m)$ is a 1-dimensional cycle, i.e. $\sum_im_i=(n-2)(r+1)-r$. Then for $J\subseteq\{1,\ldots,n\}$ with $2\le\abs{J}\le n-2$, the restriction $W_r(\vec m)|_{D_J}$ is nonzero only if the following two inequalities hold:
$$
\sum_{i\in J} m_i < (|J|-1)(r+1) \quad\quad\text{ and } \quad\quad\sum_{i\in J^c} m_i < (|J^c|-1)(r+1).
$$
\end{prop}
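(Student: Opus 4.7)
The plan is to apply the CohFT property (Proposition~\ref{ax:RestrictToStratum}) to express $W_r(\vec m)|_{D_J}$ as a product of Witten classes on the two factors of $D_J$, and then to observe that for this product to be nonzero, the rank of each factor's Witten bundle must be at most the dimension of the corresponding moduli space. Each rank-vs-dimension inequality will translate directly into one of the two desired strict inequalities, with the strictness coming from $m_J, m_{J^c} \ge 1$.

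Concretely, I would first invoke Proposition~\ref{ax:RestrictToStratum} to write $W_r(\vec m)|_{D_J} = W_r(\vec m_J)\boxtimes W_r(\vec m_{J^c})$, where $\vec m_J := (m_i)_{i\in J}\cup\{m_J\}$ and similarly for $\vec m_{J^c}$. If the left side is nonzero, both factors must be nonzero; in particular, each is an Euler class of a Witten bundle whose rank cannot exceed the dimension of the underlying moduli space, else the Euler class would vanish for dimensional reasons. Applying the rank formula \eqref{eq:RankOfWittenBundle} to $\mathcal{W}_{0,|J|+1}^r(\vec m_J)$ and using $\dim\Mbar_{0,|J|+1} = |J|-2$, a short rearrangement gives
\[
\rank\mathcal{W}_{0,|J|+1}^r(\vec m_J) - (|J|-2) \;=\; \frac{m_J + \sum_{i\in J} m_i - (|J|-1)(r+1)}{r}.
\]
For $W_r(\vec m_J)\ne 0$ the left-hand side must be non-positive, forcing
\[
\sum_{i\in J} m_i \;\le\; (|J|-1)(r+1) - m_J \;<\; (|J|-1)(r+1),
\]
where the strict inequality uses $m_J \ge 1$. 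Running the symmetric argument on $W_r(\vec m_{J^c})$ yields the second inequality.

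I do not anticipate a serious obstacle: the entire argument is an arithmetic consequence of the rank formula together with the CohFT property, once one recognizes that the correct comparison is rank versus dimension on each factor. The main step is the bookkeeping needed to rearrange the rank formula into the clean form above; the hypothesis that $W_r(\vec m)$ is 1-dimensional enters only to ensure that the product decomposition lands in the correct codimension for the dimensional comparison to be informative on both factors simultaneously.
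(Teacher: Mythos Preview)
Your proposal is correct and follows essentially the same approach as the paper: both arguments invoke the CohFT decomposition (Proposition~\ref{ax:RestrictToStratum}) and then compare the rank of the Witten bundle on each factor to the dimension of that factor via \eqref{eq:RankOfWittenBundle}, with strictness coming from $m_J\ge1$. The only cosmetic difference is that the paper argues the contrapositive (assuming an inequality fails and deducing vanishing), while you argue directly; as you note, the hypotheses $n>4$ and the 1-dimensionality are not actually used in the argument itself.
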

\begin{proof}
Let $m_J,m_{J^c}$ be as in \eqref{monodromy at nodes}. Without loss of generality, suppose that 
$
\sum_{i\in J} m_i \ge (|J|-1)(r+1),
$
so in particular $$m_J+\sum_{i\in J} m_i > (|J|-1)(r+1).$$ Then by \eqref{eq:RankOfWittenBundle}, we have $$\rank{\mathcal W}_r((m_i)_{i\in J}\cup\{m_J\})>\abs{J}-2=\dim\Mbar_{0,\abs{J}+1},$$ and so $W_r((m_i)_{i\in J}\cup\{m_J\})=0$ for dimension reasons. By Proposition~\ref{ax:RestrictToStratum}, $W_r(\vec m)|_{D_J} = 0$.
\end{proof}

\section{Tropical realizations of classes from cohomological field theories.}\label{sec:TropCohFT}

In this section we recall some notions about the tropical intersection theory of $M_{0,n}^{\trop}$ and its relation to the  intersection theory of $\overline{M}_{0,n}$. We define the tropicalization of a CohFT, then restrict our attention to genus zero and describe the relationship between the balancing condition for tropicalized CohFT cycles and the WDVV relations. Basics of tropical moduli spaces that are assumed here may be found in \cite{SpeyerSturmfels2004,MikhalkinModuli,GathmannKerberMarkwig, ACP}.

\subsection{Tropicalization of Cycle Classes in $\overline{M}_{0,n}$} 

\label{sec:tctc}

The moduli space $\overline{M}_{0,n}$ of rational, stable, $n$-pointed curves is a tropical compactification of $M_{0,n}$ \cite{Tev}: the noncompact space $M_{0,n}$ may be realized as a closed subvariety of a torus $T$. Denote the cocharacter lattice of $T$ by $N_T$. The tropicalization of $M_{0,n}$ lives inside the vector space $Q_{[n]} := N_T\otimes_{\mathbb{Z}}{\mathbb{R}}$ as a balanced fan, which is naturally identified with
$M_{0,n}^{\trop}$, the moduli space of rational, stable, tropical $n$-pointed curves (Fact \ref{fact:BalancedFan}).
Given a Chow class in $\overline{M}_{0,n}$, one may define (Definition \ref{def:tropcycles}) its \emph{tropicalization}, a weighted subfan of $M_{0,n}^{\trop}$ that satisfies the balancing condition. We review the aspects of this story that we will be using, and refer the reader to \cite{KerberMarkwig2009,Katz} for proofs. Some extended examples are included for the benefit of the unfamiliar readers.

 \subsubsection*{The ambient space} The vector space $Q_{[n]}$ has dimension $\binom{n-1}{2}-1$; we describe it by giving it a (non-canonical) set of generators and relations. Fix $k\in [n]$.  Then the set of pairs $\{i,j\}$ with $i,j\not=k$  gives a system of $\binom{n-1}{2}$ vectors $\mathbf{r}_{\{i,j\}}^k$ generating $Q_{[n]}$. Their $\mathbb{Z}$-span gives the integral lattice  in $Q_{[n]}$ and they are subject to the unique relation (up to scaling): \begin{equation}\label{eq:uniquerelations}
    \sum_{\{i,j\} \in [n]\setminus\{k\}} \mathbf{r}_{\{i,j\}}^k = 0 \in Q_{[n]}. 
 \end{equation} In other words, the vectors $\mathbf{r}_{\{i,j\}}^k$ may be thought of as the the primitive vectors generating the rays for the fan of projective space $\mathbb{P}^{\binom{n-1}{2}-1}$.
\subsubsection*{The embedding of $M_{0,n}^{\trop}$ in $Q_{[n]}$} The space
$M_{0,n}^{\trop}$ is a cone complex parameterizing stable metric trees with ``legs'' labeled by $[n]$. (A \emph{leg} is a labeled half-edge incident to a vertex, usually taken to have infinite length.) For each topological type of tree $\Gamma$, one has a cone isomorphic to $\mathbb{R}_{\ge 0}^{|E(\Gamma)|}$ corresponding to all possible edge lengths, and the cones are glued together along faces by declaring a graph with an edge of length zero equivalent to the graph obtained by contracting of that edge. The abstract cone complex $M_{0,n}^{\trop}$ admits a natural embedding into $Q_{[n]}$, which we now describe. We first give the image ${\mathbf{v}_I}\in Q_{[n]}$ of primitive vectors  generating the rays of $M_{0,n}^{\trop}$; these correspond to trees with a single edge of length one, separating the set of legs into two parts $I, I^c$ both of size at least $2$. Without loss of generality, we assume that $k\not \in I$. We declare
\begin{equation} \label{eq:relvr}
    {\mathbf{v}_I} = \sum_{\{i,j\}\subseteq I} \mathbf{r}_{\{i,j\}}^k.
\end{equation}
Given this information, the image of $M_{0,n}^{\trop}$ in $Q_{[n]}$  is determined by multi-linearity. A point ${\bf x}\in Q_{[n]}$ of (the image of) 
 $M_{0,n}^{\trop}$ corresponds to an $n$-marked metric tree $\Gamma$. Each edge $e\in E(\Gamma)$ produces a two-part partition $I_e, I_e^c$ of the set of indices, where we again assume $k\not\in I_e$. If $l_e$ denotes the length of the edge $e$, then 
\begin{equation}
    {\bf x} = \sum_{e\in E} l_e \mathbf{v}_{I_e}.
\end{equation}

\begin{rmk}
    Both the vector space $Q_{[n]}$ and the embedding of $ M_{0,n}^{\trop}$ may be described in a canonical way (i.e. without choosing a distinguished index $k$), see \cite[Section 2]{KerberMarkwig2009}. The choice of $k$ provides a non-canonical system of generators that makes both statements and computations more concrete.
\end{rmk}
\subsubsection*{Balancing} A pure $d$-dimensional weighted fan $\Sigma$ in a vector space $Q$ with an integral structure is called \emph{balanced} if for every face $\tau$ of dimension $d-1$, one has
\begin{equation}
  \label{eq:balancingcondition}  \sum_{\sigma\succ \tau} w(\sigma) \ \mathbf{u}_{\tau/\sigma} = 0 \in Q/\langle \tau \rangle_{\mathbb{R}},
\end{equation}
where the sum ranges over all $d$-dimensional cones $\sigma$ of $\Sigma$ that contain $\tau$ as a face, $w(\sigma)$ is the weight of the cone $\sigma$,  and $\mathbf{u}_{\tau/\sigma}$ is a primitive normal vector to $\tau$ in $\sigma$, i.e. a primitive vector in the quotient space $\langle\sigma\rangle_\mathbb{R}/\langle\tau\rangle_\mathbb{R} \subseteq Q/\langle\tau\rangle_\mathbb{R}$.

\begin{fact}[For a proof, see {\cite[Thm. 3.7]{GathmannKerberMarkwig}}]\label{fact:BalancedFan} 
The space $M_{0,n}^{\trop}\subseteq Q_{[n]}$, with the weight function $w(\sigma) = 1$ for all top dimensional cones $\sigma$, is a balanced fan of pure dimension $n-3$.
\end{fact}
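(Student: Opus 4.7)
The plan is to verify the two assertions in order: purity of dimension, then the balancing condition at every codimension-one face.

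Purity is immediate from the combinatorial description of $M_{0,n}^{\trop}$: top-dimensional cones correspond to combinatorial types of $n$-marked trivalent trees, parametrized by their internal edge lengths, and any trivalent $n$-leaf tree has exactly $n-3$ internal edges. So each maximal cone is a copy of $\mathbb{R}_{\ge 0}^{n-3}$.

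For balancing, I would work locally at a codimension-one cone $\tau$, which corresponds to a tree $\Gamma$ with exactly one $4$-valent internal vertex $v$ (the others being trivalent). The four subtrees meeting $v$ induce a partition of $[n]$ into sets $A, B, C, D$, and the three top-dimensional cones containing $\tau$ correspond to the three resolutions of $v$ into a pair of trivalent vertices joined by a new internal edge, i.e.\ the three $2{+}2$ pairings of $\{A,B,C,D\}$. Assuming without loss of generality that $k \in D$, the primitive normal vectors of the three resolutions are $\mathbf{v}_{A\cup B}$, $\mathbf{v}_{A\cup C}$, and $\mathbf{v}_{B\cup C}$ (always taking the side not containing $k$).

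The heart of the proof is then showing $\mathbf{v}_{A\cup B}+\mathbf{v}_{A\cup C}+\mathbf{v}_{B\cup C}\in\langle\tau\rangle_{\mathbb{R}}$. Expanding each summand via \eqref{eq:relvr} as $\mathbf{v}_X+\mathbf{v}_Y+\sum_{i\in X,\,j\in Y}\mathbf{r}^k_{\{i,j\}}$ and collecting the cross-terms, the sum simplifies to $\mathbf{v}_A + \mathbf{v}_B + \mathbf{v}_C + \mathbf{v}_{A\cup B\cup C}$; each of these four vectors is either a ray generator of $\tau$ itself, corresponding to an edge of $\Gamma$ adjacent to $v$, or vanishes because the associated subtree is a single leaf (by \eqref{eq:relvr} for leaves other than $k$, and by \eqref{eq:uniquerelations} for $k$ itself). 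The main bookkeeping subtlety is precisely this case distinction between internal and leaf edges around $v$, together with tracking the convention $k\notin I$ through the three resolutions; conventions \eqref{eq:uniquerelations} and \eqref{eq:relvr} handle these cleanly, so no essential new difficulty arises.
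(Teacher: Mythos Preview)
The paper does not give its own proof of this fact; it simply cites \cite[Thm.~3.7]{GathmannKerberMarkwig}. Your argument is correct and is essentially the standard one found there, and it also matches the special case the paper works out in Example~\ref{ex:M05} (balancing along the ray $\langle\mathbf{v}_{\{1,2,3\}}\rangle_{\mathbb{R}_{\ge0}}$). The identity $\mathbf{v}_{A\cup B}+\mathbf{v}_{A\cup C}+\mathbf{v}_{B\cup C}=\mathbf{v}_A+\mathbf{v}_B+\mathbf{v}_C+\mathbf{v}_{A\cup B\cup C}$ and your case analysis for leaf versus internal edges are exactly right.

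One small point you pass over: for the balancing condition \eqref{eq:balancingcondition} as stated, you need not just that $\mathbf{v}_{A\cup B}$ etc.\ are lattice vectors in the direction of the new rays, but that their images in the quotient lattice of $Q_{[n]}/\langle\tau\rangle_{\mathbb{R}}$ are \emph{primitive}. This is a separate (easy) check; in the paper's language it is essentially the content of Claim~\ref{claim:comb}, which identifies a neighborhood of $\tau$ with a product of smaller $Q_{[n_{\mathrm v}]}$'s, so that the three normal directions at the $4$-valent vertex are identified with the three primitive ray generators of $M_{0,4}^{\trop}\subset Q_{[4]}$.
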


\begin{ex} We illustrate the ideas presented in the simplest nontrivial example, $M_{0,4}^{\trop}$, drawn in Figure \ref{fig:m04}.
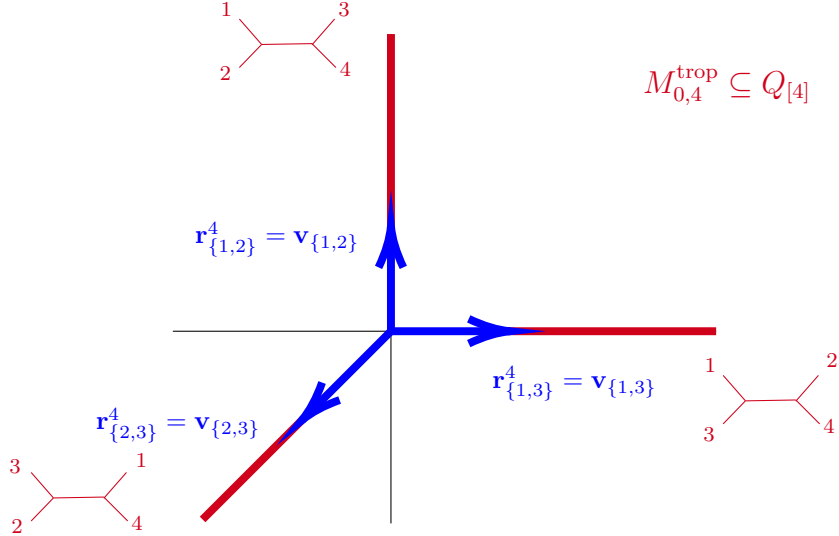
\begin{figure}[tb]
    \centering
\begin{tikzpicture}[x=0.75pt,y=0.75pt,yscale=-1,xscale=1]

\draw    (200,287) -- (200,40) ;
\draw    (90,190) -- (364,190) ;
\draw [color={rgb, 255:red, 208; green, 2; blue, 27 }  ,draw opacity=1 ][line width=3]    (260,190) -- (364,190) ;
\draw [color={rgb, 255:red, 208; green, 2; blue, 27 }  ,draw opacity=1 ][line width=3]    (200,137) -- (200,40) ;
\draw [color={rgb, 255:red, 208; green, 2; blue, 27 }  ,draw opacity=1 ][line width=3]    (155,235) -- (105,285) ;
\draw [color={blue}  ,draw opacity=1 ][line width=3]    (200,190) -- (260,190) ;
\draw [shift={(260,190)}, rotate = 180] [color={blue}  ,draw opacity=1 ][line width=3]    (20.77,-6.25) .. controls (13.2,-2.65) and (6.28,-0.57) .. (0,0) .. controls (6.28,0.57) and (13.2,2.65) .. (20.77,6.25)   ;
\draw [color={blue}  ,draw opacity=1 ][line width=3]    (200,190) -- (200,137) ;
\draw [shift={(200,137)}, rotate = 90] [color={blue}  ,draw opacity=1 ][line width=3]    (20.77,-6.25) .. controls (13.2,-2.65) and (6.28,-0.57) .. (0,0) .. controls (6.28,0.57) and (13.2,2.65) .. (20.77,6.25)   ;
\draw [color={blue},draw opacity=1 ][line width=3]    (200,190) -- (155,235);
\draw [shift={(155,235)}, rotate = 315] [color={blue}  ,draw opacity=1 ][line width=3]    (20.77,-6.25) .. controls (13.2,-2.65) and (6.28,-0.57) .. (0,0) .. controls (6.28,0.57) and (13.2,2.65) .. (20.77,6.25)   ;
\draw [color={rgb, 255:red, 208; green, 2; blue, 27 }  ,draw opacity=1 ][fill={rgb, 255:red, 208; green, 2; blue, 27 }  ,fill opacity=1 ]   (160.19,44.83) -- (172.23,56.9) ;
\draw [color={rgb, 255:red, 208; green, 2; blue, 27 }  ,draw opacity=1 ][fill={rgb, 255:red, 208; green, 2; blue, 27 }  ,fill opacity=1 ]   (160.19,45.44) -- (171.6,32.4) ;
\draw [color={rgb, 255:red, 208; green, 2; blue, 27 }  ,draw opacity=1 ][fill={rgb, 255:red, 208; green, 2; blue, 27 }  ,fill opacity=1 ]   (160.19,44.83) -- (134.78,45.26) ;
\draw [color={rgb, 255:red, 208; green, 2; blue, 27 }  ,draw opacity=1 ][fill={rgb, 255:red, 208; green, 2; blue, 27 }  ,fill opacity=1 ]   (123.54,32.4) -- (134.78,45.26) ;
\draw [color={rgb, 255:red, 208; green, 2; blue, 27 }  ,draw opacity=1 ][fill={rgb, 255:red, 208; green, 2; blue, 27 }  ,fill opacity=1 ]   (134.78,45.26) -- (123.54,56.9) ;
\draw [color={rgb, 255:red, 208; green, 2; blue, 27 }  ,draw opacity=1 ][fill={rgb, 255:red, 208; green, 2; blue, 27 }  ,fill opacity=1 ]   (404.19,224.7) -- (416.23,236.78) ;
\draw [color={rgb, 255:red, 208; green, 2; blue, 27 }  ,draw opacity=1 ]   (404.19,225.31) -- (415.6,212.28) ;
\draw [color={rgb, 255:red, 208; green, 2; blue, 27 }  ,draw opacity=1 ][fill={rgb, 255:red, 208; green, 2; blue, 27 }  ,fill opacity=1 ]   (404.19,224.7) -- (378.78,225.14) ;
\draw [color={rgb, 255:red, 208; green, 2; blue, 27 }  ,draw opacity=1 ][fill={rgb, 255:red, 208; green, 2; blue, 27 }  ,fill opacity=1 ]   (367.54,212.28) -- (378.78,225.14) ;
\draw [color={rgb, 255:red, 208; green, 2; blue, 27 }  ,draw opacity=1 ][fill={rgb, 255:red, 208; green, 2; blue, 27 }  ,fill opacity=1 ]   (378.78,225.14) -- (367.54,236.78) ;
\draw [color={rgb, 255:red, 208; green, 2; blue, 27 }  ,draw opacity=1 ][fill={rgb, 255:red, 208; green, 2; blue, 27 }  ,fill opacity=1 ]   (55.19,273.7) -- (67.23,285.78) ;
\draw [color={rgb, 255:red, 208; green, 2; blue, 27 }  ,draw opacity=1 ][fill={rgb, 255:red, 208; green, 2; blue, 27 }  ,fill opacity=1 ]   (55.19,274.31) -- (66.6,261.28) ;
\draw [color={rgb, 255:red, 208; green, 2; blue, 27 }  ,draw opacity=1 ][fill={rgb, 255:red, 208; green, 2; blue, 27 }  ,fill opacity=1 ]   (55.19,273.7) -- (29.78,274.14) ;
\draw [color={rgb, 255:red, 208; green, 2; blue, 27 }  ,draw opacity=1 ][fill={rgb, 255:red, 208; green, 2; blue, 27 }  ,fill opacity=1 ]   (18.54,261.28) -- (29.78,274.14) ;
\draw [color={rgb, 255:red, 208; green, 2; blue, 27 }  ,draw opacity=1 ][fill={rgb, 255:red, 208; green, 2; blue, 27 }  ,fill opacity=1 ]   (29.78,274.14) -- (18.54,285.78) ;

\draw (326,54.4) node [anchor=north west][inner sep=0.75pt][font=\Large,color={rgb, 255:red, 208; green, 2; blue, 27 }  ,opacity=1 ]    { $M_{0,4}^{\trop}\subseteq Q_{[4]}$};
\draw (250,206.4) node [anchor=north west][inner sep=0.75pt] [color = blue]   {$\mathbf{r}^{4}_{\{1,3\}} = \mathbf{v}_{\{1,3\}}$};
\draw (100,133.4) node [anchor=north west][inner sep=0.75pt][color = blue]    {$\mathbf{r}^{4}_{\{1,2\}} = \mathbf{v}_{\{1,2\}}$};
\draw (50,227.4) node [anchor=north west][inner sep=0.75pt] [color = blue]   {$\mathbf{r}^{4}_{\{2,3\}} = \mathbf{v}_{\{2,3\}}$};
\draw (112.99,22.53) node [anchor=north west][inner sep=0.75pt]  [font=\footnotesize,color={rgb, 255:red, 208; green, 2; blue, 27}  ,opacity=1 ]  {$1$};
\draw (112.37,54.99) node [anchor=north west][inner sep=0.75pt]  [font=\footnotesize,color={rgb, 255:red, 208; green, 2; blue, 27 }  ,opacity=1 ]  {$2$};
\draw (172.29,22.53) node [anchor=north west][inner sep=0.75pt]  [font=\footnotesize,color={rgb, 255:red, 208; green, 2; blue, 27 }  ,opacity=1 ]  {$3$};
\draw (172.92,53.15) node [anchor=north west][inner sep=0.75pt]  [font=\footnotesize,color={rgb, 255:red, 208; green, 2; blue, 27 }  ,opacity=1 ]  {$4$};
\draw (356.99,202.4) node [anchor=north west][inner sep=0.75pt]  [font=\footnotesize,color={rgb, 255:red, 208; green, 2; blue, 27 }  ,opacity=1 ]  {$1$};
\draw (418.37,199.86) node [anchor=north west][inner sep=0.75pt]  [font=\footnotesize,color={rgb, 255:red, 208; green, 2; blue, 27 }  ,opacity=1 ]  {$2$};
\draw (356.29,237.4) node [anchor=north west][inner sep=0.75pt]  [font=\footnotesize,color={rgb, 255:red, 208; green, 2; blue, 27 }  ,opacity=1 ]  {$3$};
\draw (416.92,233.03) node [anchor=north west][inner sep=0.75pt]  [font=\footnotesize,color={rgb, 255:red, 208; green, 2; blue, 27 }  ,opacity=1 ]  {$4$};
\draw (69.99,250.4) node [anchor=north west][inner sep=0.75pt]  [font=\footnotesize,color={rgb, 255:red, 208; green, 2; blue, 27 }  ,opacity=1 ]  {$1$};
\draw (7.37,283.86) node [anchor=north west][inner sep=0.75pt]  [font=\footnotesize,color={rgb, 255:red, 208; green, 2; blue, 27 }  ,opacity=1 ]  {$2$};
\draw (6.29,253.4) node [anchor=north west][inner sep=0.75pt]  [font=\footnotesize,color={rgb, 255:red, 208; green, 2; blue, 27 }  ,opacity=1 ]  {$3$};
\draw (67.92,282.03) node [anchor=north west][inner sep=0.75pt]  [font=\footnotesize,color={rgb, 255:red, 208; green, 2; blue, 27 }  ,opacity=1 ]  {$4$};

\end{tikzpicture}
    \caption{The space $M_{0,4}^{\trop}$ inside $Q_{[4]}$ as a balanced fan.}
    \label{fig:m04}
\end{figure}
The vector space $Q_{[4]}$ is two-dimensional, presented as generated by three vectors. Choosing $k = 4$, the generators are $\mathbf{r}^4_{\{1,2\}},$ $\mathbf{r}^4_{\{1,3\}},$ and $\mathbf{r}^4_{\{2,3\}}$, and the relation is 
\begin{equation}\label{eq:balm04}
 \mathbf{r}^4_{\{1,2\}}+ \mathbf{r}^4_{\{1,3\}}+ \mathbf{r}^4_{\{2,3\}} = 0.   
\end{equation}
One may define a linear isomorphism $Q_{[4]}\xrightarrow{\cong}{}\mathbb{R}^2$ by identifying the first two generators with the standard basis vectors; the third generator becomes the vector $(-1,-1)$.
The rays of $M_{0,4}^{\trop}$ are spanned by the primitive vectors $\mathbf{v}_{\{1,2\}},\mathbf{v}_{\{1,3\}},\mathbf{v}_{\{2,3\}}$, which in this simple case are equal to the corresponding vectors $\mathbf{r}^4_{\{1,2\}},\mathbf{r}^4_{\{1,3\}},\mathbf{r}^4_{\{2,3\}}$ generating $Q_{[4]}$.
Since $M_{0,4}^{\trop}$ is a one-dimensional fan, balancing needs to be checked only at the vertex (the unique $0$-dimensional face). Since all rays are given weight one, the balancing equation \eqref{eq:balancingcondition} is readily seen to reduce to \eqref{eq:balm04} in this case. 
\end{ex}

\subsubsection*{Tropicalization of  Chow Classes}
While tropical intersection theory is an actively developing field \cite{MikhalkinICM, AllermannRau2010, Katz, Shaw, Gross}  we make use of a limited portion of it, which we recall in the context of $\overline{M}_{0,n}$. We refer the reader to \cite{Katz} for proofs and for a more complete treatment.
 
 If $Y\subseteq\overline{M}_{0,n}$ is a pure $k$-dimensional cycle that meets each boundary stratum in the expected dimension, then its tropicalization is supported on the $k$-dimensional cones of $M_{0,n}^{\trop}$, with the coefficient of a given cone $\sigma\subseteq\M_{0,n}^{\trop}$ equal to the intersection number of $Y$ with the corresponding boundary stratum $\Delta_\sigma\subseteq\overline{M}_{0,n}$ \cite[Prop. 9.4]{Katz}. By the moving lemma, one may apply this construction directly to Chow classes, to obtain the following.
\begin{definition}\label{def:tropcycles}
    Let $\alpha\in A^k(\overline{M}_{0,n})$ be a codimension-$k$ Chow class. Define the tropicalization $ \alpha^{\trop}$ of $\alpha$ to be
    \begin{equation}
    \alpha^{\trop}  = \sum_{\sigma} \left(\int_{\Mbar_{0,n}} \Delta_\sigma \cdot \alpha\right)   \sigma,    
    \end{equation}
    where one may sum  over all cones of $M^{\trop}_{0,n}$, but the coefficients are nonzero only for codimension-$k$ cones.
\end{definition}
 \begin{fact}[\cite{Katz}, Lemma 8.13] The weighted fan $\alpha^{\trop}\subseteq Q_{[n]}$  from Definition \ref{def:tropcycles} is balanced. 
 \end{fact}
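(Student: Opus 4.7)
The plan is to deduce the fact from the general principle that tropicalizations of subvarieties of tori are automatically balanced, by showing that $\alpha^{\trop}$ as defined here coincides with a torus-theoretic tropicalization of a suitable representative of $\alpha$.

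First, I would use the moving lemma on $\overline{M}_{0,n}$ to replace $\alpha \in A^k(\overline{M}_{0,n})$ by an honest cycle $Y$ of pure dimension $n-3-k$ whose intersection with every boundary stratum has the expected dimension (and is in fact transverse after a generic perturbation). Then for a codimension-$k$ cone $\sigma$ of $M_{0,n}^{\trop}$ the intersection number $\int_{\Mbar_{0,n}} \Delta_\sigma \cdot \alpha$ is computed by honestly counting $Y \cap \Delta_\sigma$ with multiplicity, so the coefficient of $\sigma$ in $\alpha^{\trop}$ equals this transverse intersection number.

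Next, I would invoke the Sturmfels--Tevelev multiplicity formula for tropical compactifications (as $\overline{M}_{0,n} \subseteq X_\Sigma$ with $\Sigma = M_{0,n}^{\trop}$ is such a compactification by Tevelev): for a subvariety $Y \subseteq \overline{M}_{0,n}$ meeting the toric boundary of $X_\Sigma$ properly, the torus-theoretic tropicalization $\trop(Y \cap M_{0,n}) \subseteq N_T \otimes \mathbb{R}$ is supported on the $(n-3-k)$-dimensional skeleton of $\Sigma$, and the multiplicity on a cone $\sigma$ equals the intersection number of the closure $Y$ with the corresponding boundary stratum $\Delta_\sigma$. Under the identification of $\Sigma$ with $M_{0,n}^{\trop} \subseteq Q_{[n]}$ from Section \ref{sec:tctc}, this is precisely $\alpha^{\trop}$ as a weighted polyhedral complex.

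Finally, balancing is a property that holds for every torus-theoretic tropicalization of a subvariety of a torus — this is the classical structure theorem (Bieri--Groves / Speyer, as presented in Maclagan--Sturmfels), and it applies immediately to $\trop(Y \cap M_{0,n})$. The main obstacle in turning this sketch into a complete proof is verifying the multiplicity-matching in the middle step: one has to check that the transverse geometric intersection $Y \cdot \Delta_\sigma$ computed inside $\overline{M}_{0,n}$ and the initial-degeneration multiplicity defining the tropicalization of $Y \cap M_{0,n}$ inside the torus agree cone by cone. This compatibility is exactly the content of the Sturmfels--Tevelev / Katz results being cited, and once it is in hand, balancing is immediate.
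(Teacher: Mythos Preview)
The paper does not prove this statement; it records it as a Fact and cites \cite[Lemma~8.13]{Katz}. The surrounding discussion in the paper (just before Definition~\ref{def:tropcycles}, and in the introduction) indicates that the intended justification is via the Minkowski weight description of $A^*(X_\Sigma)$ due to Fulton--Sturmfels: the map $A^*(\overline{M}_{0,n})\to A^*(X_\Sigma)$ lands in the space of Minkowski weights on $\Sigma=M_{0,n}^{\trop}$, and Minkowski weights are balanced by definition.

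Your approach is correct in spirit and is essentially the geometric unpacking of that same Minkowski-weight argument, filtered through the Sturmfels--Tevelev picture. One point to tighten: the moving lemma produces a \emph{cycle} $Y=\sum a_iY_i$, not a single subvariety, and the classical structure theorem (Bieri--Groves/Speyer) applies to tropicalizations of irreducible subvarieties of tori. You should therefore argue componentwise---each $\trop(Y_i\cap M_{0,n})$ is balanced---and then invoke linearity of the balancing condition to conclude for the $\Z$-linear combination. As written, your phrase ``the torus-theoretic tropicalization of $Y\cap M_{0,n}$'' elides this. Once that is said explicitly, your sketch is sound, and the multiplicity-matching you flag as the main obstacle is exactly \cite[Prop.~9.4]{Katz}, which the paper already cites just above the Fact.
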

We refer to a balanced weighted subfan of $M_{0,n}^{\trop}$ as a {\it tropical cycle}.

\begin{ex}\label{ex:M05}
    We  consider the case $n=5$, and give an example of how to check the balancing condition along a ray. We then construct the tropicalization of the boundary divisor class $D_{\{1,2\}}\in A^1(\overline{M}_{0,5})$ and show it is a pure 1-dimensional weighted balanced fan.

    The vector space $Q_{[5]}$ is $5$-dimensional; choosing $k=5$, we have six generators $\mathbf{r}^{5}_{\{i,j\}}$ with $i,j\not=5$, which agree with six of the ten primitive vectors spanning the rays of $M_{0,5}^{\trop}$, i.e. \begin{equation}
        \label{eq:equalrays}
        \mathbf{v}_{\{i,j\}} = \mathbf{r}^{5}_{\{i,j\}}\ \ \ \ \mbox{for $i,j \not = 5$}.
    \end{equation} By \eqref{eq:relvr}, the primitive vectors for the remaining four rays of $M_{0,5}^{\trop}$ are
    \begin{align}\label{eq:expand}
        \mathbf{v}_{\{1,2,3\}} = \mathbf{r}^{5}_{\{1,2\}}+\mathbf{r}^{5}_{\{1,3\}}+\mathbf{r}^{5}_{\{2,3\}},\nonumber \\
         \mathbf{v}_{\{1,2,4\}} = \mathbf{r}^{5}_{\{1,2\}}+\mathbf{r}^{5}_{\{1,4\}}+\mathbf{r}^{5}_{\{2,4\}},\\
         \mathbf{v}_{\{1,3,4\}} = \mathbf{r}^{5}_{\{1,3\}}+\mathbf{r}^{5}_{\{1,4\}}+\mathbf{r}^{5}_{\{3,4\}},\nonumber\\
          \mathbf{v}_{\{2,3,4\}} = \mathbf{r}^{5}_{\{2,3\}}+\mathbf{r}^{5}_{\{2,4\}}+\mathbf{r}^{5}_{\{3,4\}}.\nonumber
    \end{align}

Each 2-dimensional cone of $M_{0,5}^{\trop}$ is spanned by two rays, and its image in $Q_{[5]}$ is determined by the images of the rays. We next check that giving all two-dimensional cones weight $1$, the fan $M_{0,5}^{\trop}$ is balanced along the ray $\tau$ spanned by $\mathbf{v}_{\{1,2,3\}}$, see Figure \ref{fig:balancingtau}. There are three two-dimensional cones containing $\tau$, denoted $\sigma_{\{1,2\}\{4,5\}}, \sigma_{\{1,3\}\{4,5\}}, \sigma_{\{2,3\}\{4,5\}}$. One may check that for each $\sigma_{\{i,j\}\{4,5\}}$, a normal vector to $\tau$ is $\mathbf{u}_{\tau/\sigma_{\{i,j\}\{4,5\}}} = \mathbf{v}_{\{i,j\}}$.
Then \eqref{eq:balancingcondition} becomes
\begin{equation}
     \mathbf{v}_{\{1,2\}}+\mathbf{v}_{\{1,3\}}+\mathbf{v}_{\{2,3\}} = 0 \in Q_{[5]}/\langle \mathbf{v}_{\{1,2,3\}}\rangle_{\mathbb{R}},
\end{equation}
which follows from \eqref{eq:equalrays},\eqref{eq:expand}. Thus $\M_{0,5}^{\trop}$ is balanced along $\mathbf{v}_{\{1,2,3\}}$.
\begin{figure}
    \centering

\tikzset{every picture/.style={line width=0.75pt}} 

\begin{tikzpicture}[x=0.75pt,y=0.75pt,yscale=-1,xscale=1]

\draw [color={rgb, 255:red, 208; green, 2; blue, 27 }  ,draw opacity=1 ][line width=2.25]    (199.43,15.43) -- (200.43,180.43) ;
\draw    (181.43,162.43) -- (305.43,284.43) ;
\draw [color={rgb, 255:red, 208; green, 2; blue, 27 }  ,draw opacity=1 ][line width=2.25]    (200.43,180.43) -- (399.43,183.43) ;
\draw [color={rgb, 255:red, 246; green, 148; blue, 148 }  ,draw opacity=1 ] [dash pattern={on 0.84pt off 2.51pt}]  (282.43,167.43) -- (198.43,49.43) ;
\draw [color={rgb, 255:red, 248; green, 161; blue, 161 }  ,draw opacity=1 ] [dash pattern={on 0.84pt off 2.51pt}]  (282.43,167.43) -- (283.43,263.43) ;
\draw [color={rgb, 255:red, 245; green, 164; blue, 164 }  ,draw opacity=1 ] [dash pattern={on 0.84pt off 2.51pt}]  (282.43,167.43) -- (349.43,181.43) ;
\draw [color={rgb, 255:red, 208; green, 2; blue, 27 }  ,draw opacity=1 ][line width=2.25]    (399.43,150.43) -- (200.43,180.43) ;
\draw [color={rgb, 255:red, 208; green, 2; blue, 27 }  ,draw opacity=1 ][line width=2.25]    (200.43,180.43) -- (310.43,289.43) ;
\draw    (170.43,179.43) -- (200.43,180.43) ;
\draw    (200.43,209.43) -- (200.43,180.43) ;
\draw  [draw opacity=0][fill={rgb, 255:red, 208; green, 100; blue, 100 }  ,fill opacity=0.42 ][dash pattern={on 4.5pt off 4.5pt}] (198.43,49.43) -- (282.43,167.43) -- (200.43,180.43) -- (198.43,49.43) -- (198.43,49.43) -- cycle ;
\draw  [draw opacity=0][fill={rgb, 255:red, 247; green, 203; blue, 203 }  ,fill opacity=0.43 ] (200.43,180.43) -- (282.43,167.43) -- (283.43,263.43) -- (200.43,180.43) -- (200.43,180.43) -- cycle ;
\draw  [draw opacity=0][fill={rgb, 255:red, 246; green, 105; blue, 105 }  ,fill opacity=0.46 ] (200.43,180.43) -- (282.43,167.43) -- (349.43,181.43) -- (200.43,180.43) -- (200.43,180.43) -- cycle ;
\draw [color={rgb, 255:red, 208; green, 2; blue, 27 }  ,draw opacity=1 ]   (368,62.28) -- (344.31,72.51) ;
\draw [color={rgb, 255:red, 208; green, 2; blue, 27 }  ,draw opacity=1 ]   (385.56,73.33) -- (368,62.28) ;
\draw [color={rgb, 255:red, 208; green, 2; blue, 27 }  ,draw opacity=1 ]   (367.51,46.17) -- (368,62.28) ;
\draw [color={rgb, 255:red, 208; green, 2; blue, 27 }  ,draw opacity=1 ]   (385.56,73.33) -- (386.05,89.44) ;
\draw [color={rgb, 255:red, 208; green, 2; blue, 27 }  ,draw opacity=1 ]   (344.31,72.51) -- (344.8,88.61) ;
\draw [color={rgb, 255:red, 208; green, 2; blue, 27 }  ,draw opacity=1 ]   (403.61,73.33) -- (385.56,73.33) ;
\draw [color={rgb, 255:red, 208; green, 2; blue, 27 }  ,draw opacity=1 ]   (344.31,72.51) -- (326.26,72.51) ;
\draw [color={rgb, 255:red, 208; green, 2; blue, 27 }  ,draw opacity=1 ]   (525.45,139.24) -- (511.76,131.5) ;
\draw [color={rgb, 255:red, 208; green, 2; blue, 27 }  ,draw opacity=1 ]   (511.37,120.24) -- (511.76,131.5) ;
\draw [color={rgb, 255:red, 208; green, 2; blue, 27 }  ,draw opacity=1 ]   (525.45,139.24) -- (525.84,150.5) ;
\draw [color={rgb, 255:red, 208; green, 2; blue, 27 }  ,draw opacity=1 ]   (511.99,131.66) -- (512.09,144.2) ;
\draw [color={rgb, 255:red, 208; green, 2; blue, 27 }  ,draw opacity=1 ]   (539.54,139.24) -- (525.45,139.24) ;
\draw [color={rgb, 255:red, 208; green, 2; blue, 27 }  ,draw opacity=1 ]   (511.21,131.66) -- (497.13,131.66) ;
\draw [color={rgb, 255:red, 208; green, 2; blue, 27 }  ,draw opacity=1 ]   (131.86,29.06) -- (115.42,36.39) ;
\draw [color={rgb, 255:red, 208; green, 2; blue, 27 }  ,draw opacity=1 ]   (131.52,17.52) -- (131.86,29.06) ;
\draw [color={rgb, 255:red, 208; green, 2; blue, 27 }  ,draw opacity=1 ]   (131.55,29.09) -- (131.89,40.64) ;
\draw [color={rgb, 255:red, 208; green, 2; blue, 27 }  ,draw opacity=1 ]   (115.42,36.39) -- (115.76,47.93) ;
\draw [color={rgb, 255:red, 208; green, 2; blue, 27 }  ,draw opacity=1 ]   (144.08,29.09) -- (131.55,29.09) ;
\draw [color={rgb, 255:red, 208; green, 2; blue, 27 }  ,draw opacity=1 ]   (115.42,36.39) -- (102.9,36.39) ;

\draw (380,126.4) node [anchor=north west][inner sep=0.75pt]  [font=\normalsize]  {$\tau = \langle \mathbf{v}_{\{1,2,3\}}\rangle_{\mathbb{R}_{\geq 0}}$};
\draw (323,218.4) node [anchor=north west][inner sep=0.75pt]  [font=\footnotesize]  {$\sigma_{\{1,2\}\{4,5\}}$};
\draw[->] (323,218.4)--(275,210);
\draw (256,64.4) node [anchor=north west][inner sep=0.75pt]  [font=\footnotesize]  {$\sigma_{\{2,3\}\{4,5\}}$};
\draw[->] (266,80) -- (240,130);
\draw (354,163.4) node [anchor=north west][inner sep=0.75pt]  [font=\footnotesize]  {$\sigma_{\{1,3\}\{4,5\}}$};
\draw[->] (350,168)--(305,175);
\draw (319,279.4) node [anchor=north west][inner sep=0.75pt]    {$\langle\mathbf{v}_{\{1,2\}}\rangle_{\mathbb{R}_{\geq 0}}$};
\draw (410,173.4) node [anchor=north west][inner sep=0.75pt]    {$\langle\mathbf{v}_{\{1,3\}}\rangle_{\mathbb{R}_{\geq 0}}$};
\draw (140,-5) node [anchor=north west][inner sep=0.75pt]    {$\langle\mathbf{v}_{\{2,3\}}\rangle_{\mathbb{R}_{\geq 0}}$};
\draw (386.07,90.6) node [anchor=north west][inner sep=0.75pt]  [font=\scriptsize]  {$\textcolor[rgb]{0.82,0.01,0.11}{4}$};
\draw (408.41,71.67) node [anchor=north west][inner sep=0.75pt]  [font=\scriptsize]  {$\textcolor[rgb]{0.82,0.01,0.11}{5}$};
\draw (362,32.16) node [anchor=north west][inner sep=0.75pt]  [font=\scriptsize]  {$\textcolor[rgb]{0.82,0.01,0.11}{1}$};
\draw (313.02,69.2) node [anchor=north west][inner sep=0.75pt]  [font=\scriptsize]  {$\textcolor[rgb]{0.82,0.01,0.11}{2}$};
\draw (334.5,93.07) node [anchor=north west][inner sep=0.75pt]  [font=\scriptsize]  {$\textcolor[rgb]{0.82,0.01,0.11}{3}$};
\draw (524.53,149.93) node [anchor=north west][inner sep=0.75pt]  [font=\scriptsize]  {$\textcolor[rgb]{0.82,0.01,0.11}{4}$};
\draw (541.97,136.69) node [anchor=north west][inner sep=0.75pt]  [font=\scriptsize]  {$\textcolor[rgb]{0.82,0.01,0.11}{5}$};
\draw (505.76,109.06) node [anchor=north west][inner sep=0.75pt]  [font=\scriptsize]  {$\textcolor[rgb]{0.82,0.01,0.11}{1}$};
\draw (485.48,127.97) node [anchor=north west][inner sep=0.75pt]  [font=\scriptsize]  {$\textcolor[rgb]{0.82,0.01,0.11}{2}$};
\draw (502.24,144.66) node [anchor=north west][inner sep=0.75pt]  [font=\scriptsize]  {$\textcolor[rgb]{0.82,0.01,0.11}{3}$};
\draw (130.07,40.16) node [anchor=north west][inner sep=0.75pt]  [font=\scriptsize]  {$\textcolor[rgb]{0.82,0.01,0.11}{4}$};
\draw (145.57,26.6) node [anchor=north west][inner sep=0.75pt]  [font=\scriptsize]  {$\textcolor[rgb]{0.82,0.01,0.11}{5}$};
\draw (125.86,6.18) node [anchor=north west][inner sep=0.75pt]  [font=\scriptsize]  {$\textcolor[rgb]{0.82,0.01,0.11}{1}$};
\draw (91.87,32.71) node [anchor=north west][inner sep=0.75pt]  [font=\scriptsize]  {$\textcolor[rgb]{0.82,0.01,0.11}{2}$};
\draw (106.78,49.81) node [anchor=north west][inner sep=0.75pt]  [font=\scriptsize]  {$\textcolor[rgb]{0.82,0.01,0.11}{3}$};

\end{tikzpicture}
    \caption{A local picture of $M_{0,5}^{\trop}$ around the ray $\tau = \langle \mathbf{v}_{\{1,2,3\}}\rangle_{\mathbb{R}_{\geq 0}}$. The red trees drawn next to the rays $\tau, \langle \mathbf{v}_{\{2,3\}}\rangle_{\mathbb{R}_{\geq 0}}$ and the two dimensional cone $\sigma_{\{2,3\}\{4,5\}}$ show the tropical curves parameterized by those cones.}
    \label{fig:balancingtau}
\end{figure}
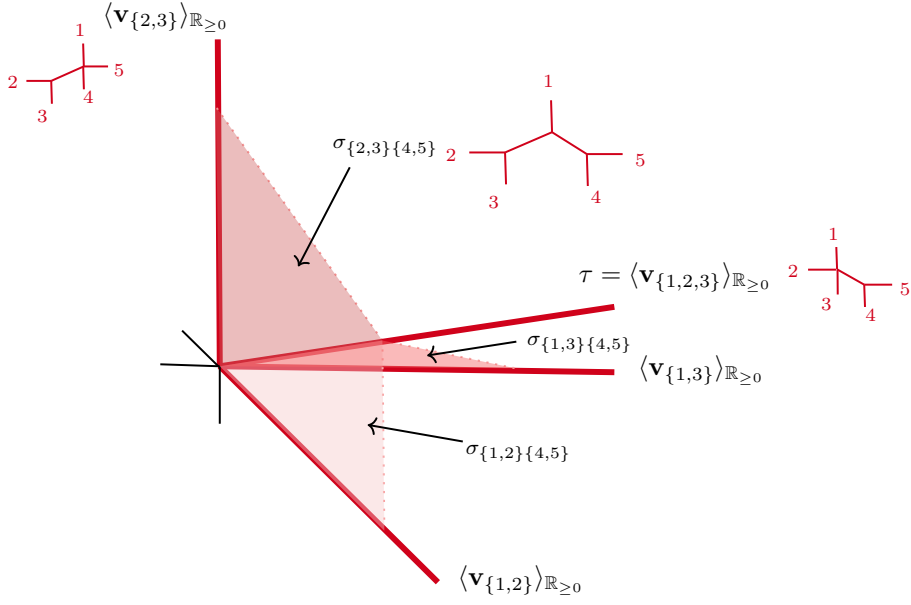

Consider  the class $D_{\{1,2\}} \in A^1(\overline{M}_{0,5})$ of the divisor generically parameterizing stable curves with two components, one containing the first two marks, the other containing the other marks. We construct its tropicalization $D_{\{1,2\}}^{\trop}$ and check it is balanced.

The intersections of boundary strata in $\overline{M}_{0,n}$ are well understood (see for example \cite{RenzonotesonM0n}). The intersection numbers of $D_{\{1,2\}}$ with all boundary divisors are as follows: \begin{align*}
    &\deg(D_{\{1,2\}}^2) = -1,\\
    \deg(D_{\{1,2\}}\cdot D_{\{3,4\}})  = &\deg(D_{\{1,2\}}\cdot D_{\{3,5\}}) = \deg(D_{\{1,2\}}\cdot D_{\{4,5\}})= 1,
\end{align*}
and the intersection numbers of $D_{\{1,2\}}$ with the remaining $6$ boundary divisors are all zero.

By Definition \ref{def:tropcycles}, we have
\begin{equation}\label{eq:CoeffsOfD12}
  D_{\{1,2\}}^{\trop} =  
  \langle \mathbf{v}_{\{3,4\}}\rangle_{\mathbb{R}_{\geq 0}}+
  \langle \mathbf{v}_{\{1,2,3\}}\rangle_{\mathbb{R}_{\geq 0}}+
  \langle \mathbf{v}_{\{1,2,4\}}\rangle_{\mathbb{R}_{\geq 0}}-
  \langle \mathbf{v}_{\{1,2\}}\rangle_{\mathbb{R}_{\geq 0}}.
\end{equation}
To check that $D_{\{1,2\}}^{\trop}$ satisfies the balancing condition at the origin, we write each of the primitive vectors of the rays of $D_{\{1,2\}}^{\trop}$ as linear combinations of the generators $\mathbf{r}^5_{\{i,j\}}$ using \eqref{eq:relvr}. The computation is done in Table \ref{tab:balforD12trop}.  Indeed, Table \ref{tab:balforD12trop} shows that the sum of the weighted primitive normal vectors of the rays of $D_{\{1,2\}}^{\trop}$ can be expressed as $\sum_{\{i,j\}\in[5]\setminus\{5\}}\mathbf r^5_{\{i,j\}},$ which is equal to zero by \eqref{eq:uniquerelations}. See also Example \ref{ex:M05Addendum} below.
\begin{table}[bt]
    \centering
    \begin{tabular}{|c|c|c|c|c|c|c|}
    \hline
         & $\mathbf{r}^5_{\{1,2\}} $ 
         & $\mathbf{r}^5_{\{1,3\}} $
         & $\mathbf{r}^5_{\{1,4\}} $
         & $\mathbf{r}^5_{\{2,3\}} $
         & $\mathbf{r}^5_{\{2,4\}} $
         & $\mathbf{r}^5_{\{3,4\}} $\\
         \hline \hline
       $\mathbf{v}_{\{3,4\}}$ &0 &0 &0 &0 &0 &1 \\ \hline
        $\mathbf{v}_{\{1,2,3\}}$ &1 &1 &0 &1 &0 &0 \\ \hline
         $\mathbf{v}_{\{1,2,4\}}$ &1 &0 &1 &0 &1 &0 \\ \hline
          $-\mathbf{v}_{\{1,2\}}$ & -1 &0 &0 &0 &0 &0 \\ \hline \hline
 sum & 1&1 &1 &1 &1 &1 \\\hline
          \end{tabular}
    \caption{The computation that $D_{\{1,2\}}^{\trop}$ is a balanced cycle. The first four rows of the tables write the coefficients of the linear combination expressing each of the weighted primitive vectors in terms  of the generators $\mathbf{r}^5_{\{i,j\}}$. The last row, which adds up the previous ones,  computes equation \eqref{eq:balancingcondition} at the origin: one obtains the relation among the $\mathbf{r}^5_{\{i,j\}}$'s. }
    \label{tab:balforD12trop}
\end{table}

\end{ex}

We now show how to use the balancing condition to obtain explicit linear  relations among the coefficients of a tropical cycle on $\M_{0,n}^{\trop}$. We describe the process for a one-dimensional tropical cycle, which is the case that will be used the most later on.

Consider a 1-dimensional tropical cycle  $A = \sum_{I} a_I \langle \mathbf{v}_I \rangle_{\mathbb{R}_\geq 0}$ on $\M_{0,n}^{\trop}$. We are using the notation introduced in this section; that is, we have fixed $k\in[n]$, and we write $\mathbf{v}_I=\sum_{\{i,j\subseteq I\}}\mathbf{r}_{\{i,j\}}^k$ for the image in $Q_{[n]}$ of the tropical curve with one edge of length one, separating the marks into sets $I,I^c$ with $k\in I^c$. The origin is the only zero-dimensional cone of $A$, so checking balancing for this cycle amounts to verifying that 
\begin{equation}\label{eq:lhsofbal}
  \sum_{\substack{I\subseteq [n]\smallsetminus\{k\}\\2\le\abs{I}\le n-2}} a_I \mathbf{v}_I   = 0 \in Q_{[n]}.
\end{equation}
We use the same mark $k$ to pick a set of generators for $Q_{[n]}$, and  rewrite each of the $\mathbf{v}_I$ using equation \eqref{eq:relvr}, so  that the left hand side of \eqref{eq:lhsofbal} becomes:
\begin{equation}\label{eq:quasiboom}
   \sum_{\substack{I\subseteq [n]\setminus\{k\}\\2\le\abs{I}\le n-2}} a_I \left(\sum_{\{i,j\}\subseteq I} \mathbf{r}_{\{i,j\}}^k \right)
    =
\sum_{\{i,j\}\in[n]\setminus\{k\}}
\left(
\sum_{\substack{\{i,j\}\subseteq I\subseteq [n]\setminus\{k\}\\ \abs{I}\le n-2}} a_I 
\right)
\mathbf{r}^k_{\{i,j\}}
    =
    \sum_{\{i,j\}\in[n]\setminus\{k\}} B^k_{\{i,j\}} \mathbf{r}^k_{\{i,j\}},
\end{equation}
where the notation $B^k_{\{i,j\}}$ is defined by the second equality. Equation \eqref{eq:quasiboom}
 equals the zero vector if and only if for every pair $\{i_1, j_1\}, \{i_2, j_2\}$ with $i_1,j_1,i_2,j_2\ne k$ we have 
\begin{equation}\label{eq:boomerang}
    B^k_{\{i_1, j_1\}} = B^k_{\{i_2, j_2\}}.
\end{equation}
We observe that \eqref{eq:boomerang} is a linear homogeneous equation in the coefficients of the tropical cycle $A$. We call any such equation a {\it balancing relation}.

There is a simple description for the coefficient  $B^k_{\{i, j\}}$: it is obtained by adding the coefficient $a_{\{i, j\}}$ of the ray $\langle \mathbf{v}_{\{i, j\}} \rangle_{\mathbb{R}_\geq 0}$ and  the coefficients of the $2^{n-3}-2$ rays $\langle \mathbf{v}_I\rangle_{\mathbb{R}_\geq 0}$ adjacent to it\footnote{Here $\mathbf v_I$ and $\mathbf v_{\{i,j\}}$ being adjacent means that they span a two dimensional cone in $\M_{0,n}^{\trop}$.} such that $i,j\in I$. 
\begin{ex}\label{ex:M05Addendum}
    We illustrate how to use \eqref{eq:boomerang} concretely. As in Example \ref{ex:M05}, let $n=5$ and choose $k=5.$ By \eqref{eq:quasiboom} we have
    \begin{align*}
        B_{\{1,2\}}^5&=a_{\{1,2\}}+a_{\{1,2,3\}}+a_{\{1,2,4\}}
        \\
        B_{\{3,4\}}^5&=a_{\{3,4\}}+a_{\{1,3,4\}}+a_{\{2,3,4\}},
    \end{align*}
    so \eqref{eq:boomerang} in this case yields $$a_{\{1,2\}}+a_{\{1,2,3\}}+a_{\{1,2,4\}}=a_{\{3,4\}}+a_{\{1,3,4\}}+a_{\{2,3,4\}}.$$
    We observe that this is consistent with our calculations in Example \ref{ex:M05}; for $D_{\{1,2\}}^{\trop}$ the above equation reads: $$-1+1+1=1+0+0.$$
    We illustrate $B_{\{1,2\}}^5$ and $B_{\{3,4\}}^5$ in Figure \ref{fig:boomerangs}, and note it amounts to computing the sums of the first and last columns of Table \ref{tab:balforD12trop}.
\end{ex}

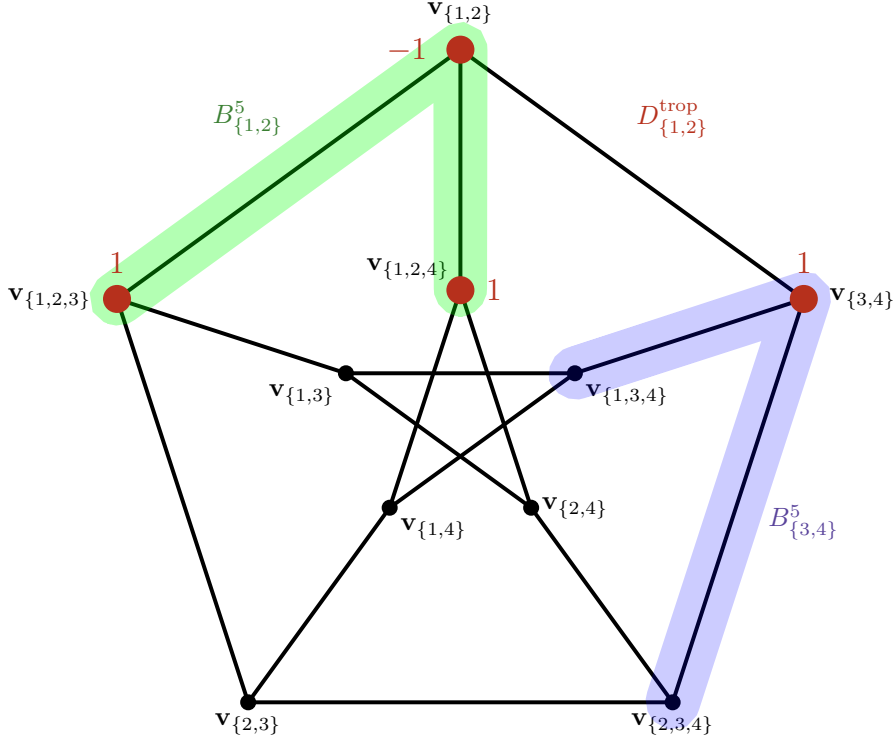
\begin{figure}
    \centering

\tikzset{every picture/.style={line width=1.5pt}} 

\begin{tikzpicture}[scale=1.6]
    \foreach \i in {0,1,2,3,4} {
    \draw (18+72*\i:1)--(162+72*\i:1);
    \draw (18+72*\i:1)--(18+72*\i:3);
    \draw (18+72*\i:3)--(90+72*\i:3);
    \filldraw (18+72*\i:1) circle(.05);
    \filldraw (18+72*\i:3) circle(.05);
    };
    \draw (18+72*0:1) node[below right] {$\mathbf{v}_{\{1,3,4\}}$};
    \draw (18+72*1:1) node[above left] {$\mathbf{v}_{\{1,2,4\}}$};
    
    \draw (18+72*2:1) node[below left] {$\mathbf v_{\{1,3\}}$};
    \draw (18+72*3:1) node[below right] {$\mathbf v_{\{1,4\}}$};
    \draw (18+72*4:1) node[right] {$\mathbf v_{\{2,4\}}$};
    \draw (18+72*0:3) node[right=.2cm] {$\mathbf v_{\{3,4\}}$};
    
    \draw (18+72*1:3) node[above=.2cm] {$\mathbf v_{\{1,2\}}$};
    
    \draw (18+72*2:3) node[left=.2cm] {$\mathbf{v}_{\{1,2,3\}}$};
    
    \draw (18+72*3:3) node[below] {$\mathbf v_{\{2,3\}}$};
    \draw (18+72*4:3) node[below] {$\mathbf{v}_{\{2,3,4\}}$};
    \draw[blue,opacity=0.2,line width=20,rounded corners=1] (18+72*5:2.99)--(18+72*4:2.99)--(18+72*4:3.01)--(18.1+72*5:3.01)--(18.1+72*5:1)--(17.9+72*5:1)--(17.9+72*5:3);
    \draw[green,opacity=0.3,line width=20,rounded corners=1] (18+72*1:2.99)--(18+72*2:2.99)--(18+72*2:3.01)--(17.9+72*1:3.01)--(17.9+72*1:1)--(18.1+72*1:1)--(18.1+72*1:3);
    \draw[OliveGreen] (18+72*1.5:3) node {$B_{\{1,2\}}^5$};
    \draw[Violet] (18+72*4.5:3) node {$B_{\{3,4\}}^5$};
    \filldraw[BrickRed] (18+72*0:3) circle(.1);
    \filldraw[BrickRed] (18+72*1:3) circle(.1);
    \filldraw[BrickRed] (18+72*2:3) circle(.1);
    \filldraw[BrickRed] (18+72*1:1) circle(.1);
    \draw[BrickRed] (18+72*1:1) node[right=.2cm] {\Large $1$};
    \draw[BrickRed] (18+72*0:3) node[above=.2cm] {\Large $1$};
    \draw[BrickRed] (18+72*1:3) node[left=.3cm] {\Large $-1$};
    \draw[BrickRed] (18+72*2:3) node[above=.2cm] {\Large $1$};
    \draw[BrickRed] (18+72*.5:3) node {$D_{\{1,2\}}^{\trop}$};
\end{tikzpicture}
\caption{The Petersen graph gives a slice of the cone complex of $M_{0,5}^{\trop}$, parameterizing  tropical curves where the total length of all the edges is equal to one (unfortunately it is not possible to draw a two-dimensional projection of $M_{0,5}^{\trop}$ which allows simultaneously to observe the  combinatorial structure of the cone complex as well as the structure of balanced fan). The vertices correspond to the primitive vectors spanning the rays of $M_{0,5}^{\trop}$ and are labeled by the corresponding $\mathbf{v}_I$'s. In red we have the cycle $D_{\{1,2\}}^{\trop}$, where the coefficients written next to the dots are the weights of the corresponding rays. Rays without a red dot have weight $0$. The sum of the coefficients in the shaded areas in green (resp. in blue) gives the coefficient $B^5_{\{1,2\}}$ (resp. $B^5_{\{3,4\}}$). One can observe the balancing relation $B^5_{\{1,2\}}= B^5_{\{3,4\}}=1$ in this case.}
\label{fig:boomerangs}
\end{figure}

\subsection{Balancing and reconstruction}\label{sec:Balancing}

In this section we apply the techniques of Section \ref{sec:tctc} 
to the genus-zero part of any cohomological field theory (CohFT). At a basic level, a CohFT should be thought of as a way to obtain a collection of Chow classes $\Omega_{g,n}(v_1, \ldots, v_n)\in A^\ast(\overline{M}_{g,n})$, indexed by elements $v_i$ of some vector space, that behave recursively when restricted to any boundary stratum. We  refer the reader to the introductory paper \cite{PandhaCohCa} for the precise definitions. Let $\Omega$ be an arbitrary CohFT over a vector space $V$.  In order to simplify the exposition and notation, we impose the following assumption.

\begin{assumption} \label{ass:pd}
There exists a basis $\{e_\alpha\}_{\alpha \in A}$ of $V$ such that any class of the form $\Omega_{g,n}(e_{\alpha_1}, \ldots, e_{\alpha_n})$ is of pure dimension, denoted $d_{g,n, \alpha_1, \ldots, \alpha_n}$.
\end{assumption}
Assumption \ref{ass:pd} is not conceptually necessary; one may decompose any mixed-degree class $\Omega_{g,n}(v_1, \ldots, v_n)$ into its homogeneous parts, apply the constructions that follow to each homogeneous part, and then formally add everything up. Assumption \ref{ass:pd} essentially avoids having to carry around these sums.
Many CohFTs that are constructed from geometric properties of curves satisfy Assumption \ref{ass:pd}, including the main example in this work, the CohFT of Witten's $r$-spin classes (see Section \ref{sec:RSpin}). 
\begin{definition}
    The {\bf numerical part}  $\omega$ of a CohFT $\Omega$ records the degree of the zero-dimensional classes of $\Omega$, i.e.  
    \begin{equation}
        \omega_{g,n}(v_1, \ldots, v_n) = \int_{\Mbar_{g,n}} \Omega_{g,n}(v_1, \ldots, v_n).
    \end{equation}
  We call elements of the numerical part of a CohFT {\bf numerical CohFT invariants}. 
\end{definition}

Given  a CohFT class $\Omega_{g,n}(v_1, \ldots, v_n)$ of pure dimension $d_{g,n,v_1, \ldots, v_n }$, one may construct its  tropicalization following Definition \ref{def:tropcycles}:

\begin{equation} \label{eq:trop}
 \Omega^{\trop}_{g,n}(v_1, \ldots, v_n)  = \sum_{\sigma} \left(\int_{\Mbar_{g,n}} \Delta_\sigma \cdot \Omega_{g,n}(v_1, \ldots, v_n)\right)\   \sigma,
\end{equation}
where the sum runs over all cones $\sigma$ of $M_{g,n}^{\trop}$.

\begin{prop}\label{thm:TropicalCycle}
The class $\Omega^{\trop}_{g,n}(v_1, \ldots, v_n)$ is supported on the $(d_{g,n,v_1, \ldots, v_n })$-dimensional skeleton of $M_{g,n}^{\trop}$, and the nonzero coefficients are determined by the numerical part of $\Omega$. 
\end{prop}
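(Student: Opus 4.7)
The plan has two parts: a dimension count for the support claim, then the CohFT restriction axioms to exhibit the nonzero coefficients as explicit combinations of numerical invariants. Unpacking \eqref{eq:trop}, the coefficient of a cone $\sigma \subseteq M_{g,n}^{\trop}$ is
$$\int_{\Mbar_{g,n}} \Delta_\sigma \cdot \Omega_{g,n}(v_1,\ldots,v_n),$$
where $\Delta_\sigma \subseteq \Mbar_{g,n}$ is the boundary stratum with dual graph $\Gamma_\sigma$. Writing $d = d_{g,n,v_1,\ldots,v_n}$ and noting that $\Delta_\sigma$ has codimension $\dim\sigma$ in $\Mbar_{g,n}$ (one coordinate of $\sigma$ per edge of $\Gamma_\sigma$), the intersection class has dimension $d - \dim\sigma$, and its degree vanishes unless $\dim \sigma = d$. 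This immediately yields the support claim.

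For the second claim I would fix $\sigma$ with $\dim \sigma = d$ and realize $\Delta_\sigma$ as the image, up to the automorphism group of $\Gamma_\sigma$, of the gluing morphism
$$\glu_{\Gamma_\sigma}:\prod_{v\in V(\Gamma_\sigma)} \Mbar_{g_v,n_v} \to \Mbar_{g,n}.$$
By the projection formula, the coefficient of $\sigma$ equals $|\Aut(\Gamma_\sigma)|^{-1}$ times the integral over the domain of $\glu_{\Gamma_\sigma}^* \Omega_{g,n}(v_1,\ldots,v_n)$. The defining axioms of a CohFT---generalizing Proposition \ref{ax:RestrictToStratum}, which handles the single-edge tree case---express this pullback as
$$\glu_{\Gamma_\sigma}^* \Omega_{g,n}(v_1,\ldots,v_n) = \sum_{(\alpha_e)} \boxtimes_v \Omega_{g_v,n_v}(\text{inputs at } v),$$
where the sum runs over assignments, to each edge $e$ of $\Gamma_\sigma$, of a basis index $\alpha_e$, with $e_{\alpha_e}$ inserted on one branch of $e$ and its pairing-dual $e^{\alpha_e}$ on the other. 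Fubini then separates the integral into a product over the factors $\Mbar_{g_v,n_v}$, so that the coefficient of $\sigma$ becomes
$$\frac{1}{|\Aut(\Gamma_\sigma)|}\sum_{(\alpha_e)} \prod_{v\in V(\Gamma_\sigma)} \omega_{g_v, n_v}(\text{inputs at } v),$$
which is manifestly determined by the numerical part $\omega$ of $\Omega$.

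The step I would be most careful about is the dimension matching that makes the preceding Fubini argument legitimate. The pure-dimension hypothesis (Assumption \ref{ass:pd}) guarantees that each $\Omega_{g_v,n_v}(\text{inputs at }v)$ has a definite dimension $d_{g_v,n_v,\text{inputs}}$, and only those assignments $(\alpha_e)$ that make every factor zero-dimensional contribute to the final product; other terms integrate to zero on their respective factors. The global condition $\dim \sigma = d$ ensures that the aggregate codimension works out on the product, so such contributing assignments exist. No further ideas are required beyond the CohFT axioms, the projection formula, and Fubini; the only other delicate bookkeeping is keeping track of the $|\Aut(\Gamma_\sigma)|$ factor and the pairing conventions at internal edges.
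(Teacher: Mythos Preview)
Your proof is correct and follows essentially the same route as the paper: a dimension count for the support claim, then the CohFT splitting axioms plus Fubini to express each nonzero coefficient as a combination of numerical invariants. You are in fact more explicit than the paper (writing out the gluing morphism, the sum over edge-assignments, and the $\lvert\Aut(\Gamma_\sigma)\rvert$ factor), but the underlying argument is the same.
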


\begin{proof}
    The first part of the statement is simply a dimension count: $\Omega^{\trop}_{g,n}(v_1, \ldots, v_n)$ can only intersect nontrivially strata of codimension $d_{g,n,v_1, \ldots, v_n }$, which in turn correspond to cones of $M_{g,n}^{\trop}$ of dimension $d_{g,n,v_1, \ldots, v_n }$.
    The second part follows from the splitting axioms for CohFTs (see \cite[Sec. 0.3]{PandhaCohCa}), which imply that the restriction of the class $\Omega^{\trop}_{g,n}(v_1, \ldots, v_n)$ to a boundary stratum $\Delta_\sigma = (\gl_\sigma)_\ast ([\prod_i \overline{M}_{g_i, n_i}])$ is equal to the pushforward of a linear combination of CohFT classes pulled-back from the factors of $\prod_i \overline{M}_{g_i, n_i}$. By Fubini's theorem, the integral over $\overline{M}_{g,n}$ of each summand splits as a product of integrals over the individual factors. Thus, for each factor, we obtain a corresponding numerical CohFT invariant.
\end{proof}

Moduli spaces of tropical curves of positive genus cannot be given the structure of a balanced fan in a vector space. Therefore for the remainder of the paper, we restrict to $g=0$, where we can exploit the additional structure of balancing of tropical cycles. 

\begin{ex}\label{ex:M05RSpin}
We illustrate a balancing relation for the $r$-spin CohFT with $r=10$, using the notation of Section \ref{sec:RSpin} and Examples \ref{ex:M05} and \ref{ex:M05Addendum}. Consider the Witten class $W_{10}(3,4,5,5,6).$ By \eqref{eq:RankOfWittenBundle}, $W_{10}(3,4,5,5,6)$ is a cycle on $\Mbar_{0,5}$ of codimension 1 (and dimension 1).

We use the balancing condition for $W_{10}(3,4,5,5,6)^{\trop}$ to deduce a relationship between 3- and 4-point $10$-spin invariants. As in Example \ref{ex:M05}, we choose $k=5$ to obtain six generators $\mathbf r_{\{i,j\}}^5$ with $i,j\in [4] = [5]\setminus\{5\}.$ We have by Definition \ref{def:tropcycles} $$W_{10}(3,4,5,5,6)^{\trop}=\sum_{\substack{
I\subseteq[4] \\ 2\le|I|\le 3}}a_I\mathbf v_I,$$ where $a_I=\int_{\Mbar_{0,5}}D_{I}\cdot W_{10}(3,4,5,5,6).$ 
By \eqref{eq:quasiboom} and \eqref{eq:boomerang}, with $(i_1,j_1)=(1,2)$ and $(i_2,j_2)=(3,4)$, we have $B_{\{1,2\}}^5=B_{\{3,4\}}^5,$ where: \begin{align}\label{eq:exB125}
    B^5_{\{1,2\}}=\sum_{\substack{I\subseteq[4]\\1,2\in I,\thickspace\abs{I}\le3}}a_I=a_{\{1,2\}}+a_{\{1,2,3\}}+a_{\{1,2,4\}}
\end{align} and \begin{align}\label{eq:exB345}
    B^5_{\{3,4\}}=\sum_{\substack{I\subseteq[4]\\3,4\in I,\thickspace\abs{I}\le3}}a_I=a_{\{3,4\}}+a_{\{1,3,4\}}+a_{\{2,3,4\}}.
\end{align} By the CohFT Axiom (Proposition \ref{ax:RestrictToStratum}), we have e.g. \begin{align*}
    a_{\{2,3,4\}}=\int_{\Mbar_{0,5}}W_{10}(3,4,5,5,6)|_{D_{\{2,3,4\}}}&=\int_{\Mbar_{0,5}}(W_{10}(4,5,5,8)\boxtimes W_{10}(3,6,2))\\
    &=\left(\int_{\Mbar_{0,4}}W_{10}(4,5,5,8)\right)\left(\int_{\Mbar_{0,3}}W_{10}(3,6,2)\right)\\
    &=w_{10}(4,5,5,8)w_{10}(3,6,2).
\end{align*}
Here in the notation of Proposition \ref{ax:RestrictToStratum} we have $m_J=8$ and $m_{J^c}=2.$ Similarly we compute the other terms of \eqref{eq:exB125} and \eqref{eq:exB345}:
\begin{align*}
    a_{\{1,2\}}&=w_{10}(3,4,4)w_{10}(5,5,6,6)&a_{\{1,2,3\}}&=w_{10}(3,4,5,10)w_{10}(5,6,10)\\
    a_{\{1,2,4\}}&=w_{10}(3,4,5,10)w_{10}(5,6,10)&
    a_{\{3,4\}}&=w_{10}(5,5,1)w_{10}(3,4,6,9)\\a_{\{1,3,4\}}&=w_{10}(3,5,5,9)w_{10}(4,6,1).
\end{align*}
Thus the equation $B_{\{1,2\}}^5=B_{\{3,4\}}^5$ gives us a polynomial relation between genus-zero 10-spin invariants:
\begin{align}\label{eq:exRelation}
w_{10}(3,4,4)&w_{10}(5,5,6,6)+w_{10}(3,4,5,10)w_{10}(5,6,10)+w_{10}(3,4,5,10)w_{10}(5,6,10)\\
&=w_{10}(5,5,1)w_{10}(3,4,6,9)+w_{10}(3,5,5,9)w_{10}(4,6,1)+w_{10}(4,5,5,8)w_{10}(3,6,2).\nonumber
\end{align}
We can generate a huge number of relations of this form between genus-zero 10-spin invariants, both by varying $i_1,$ $j_1$, $i_2,$ $j_2,$ and $k$, and by considering balancing of other (including higher-dimensional) 10-spin Witten classes along various cones of $\M_{0,n}^{\trop}$. Theorem \ref{thm:wdvv}  shows that this collection of relations is equivalent to the WDVV equations, and in Section \ref{sec:recursivestructure}, we use the balancing relations to give an efficient recursive algorithm for reconstructing any genus-zero $r$-spin invariant. For the moment,  we  note that \eqref{eq:exRelation} is consistent with Proposition \ref{prop:34Point}; indeed, \eqref{eq:exRelation} reads $$1\cdot4+0\cdot1+0\cdot1=1\cdot1+1\cdot1+2\cdot1.$$ In Figure \ref{fig:34556}, we have computed $a_I$ for all $I$, and labeled the rays of $\M_{0,5}^{\trop}$ accordingly. We have then drawn the quantities $B^k_{\{i,j\}}$ for all $i,j\in[5]\setminus\{k\}$, for $k=5$ (left picture) and $k=1$ (right picture). Pictorially, one can immediately confirm the balancing relations for $B^5_{\{i,j\}}$ and $B^1_{\{i,j\}}.$

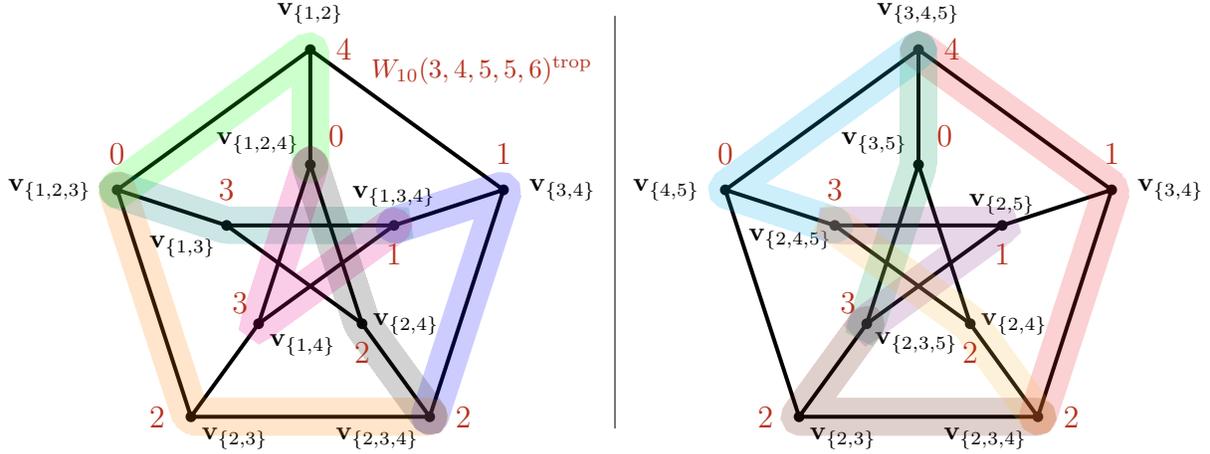
\begin{figure}[tb]
    \centering

\tikzset{every picture/.style={line width=1.5pt}} 

\begin{tikzpicture}[scale=.9]
    \foreach \i in {0,1,2,3,4} {
    \draw (18+72*\i:1.3)--(162+72*\i:1.3);
    \draw (18+72*\i:1.3)--(18+72*\i:3);
    \draw (18+72*\i:3)--(90+72*\i:3);
    \filldraw (18+72*\i:1.3) circle(.05);
    \filldraw (18+72*\i:3) circle(.05);
    };
    \draw (18+72*0:1.3) node[above=3pt] {$\mathbf{v}_{\{1,3,4\}}$};
    \draw (18+72*1:1.3) node[above left] {$\mathbf{v}_{\{1,2,4\}}$};
    
    \draw (18+72*2:1.3) node[below left] {$\mathbf v_{\{1,3\}}$};
    \draw (18+72*3:1.3) node[below right] {$\mathbf v_{\{1,4\}}$};
    \draw (18+72*4:1.3) node[right] {$\mathbf v_{\{2,4\}}$};
    \draw (18+72*0:3) node[right=.2cm] {$\mathbf v_{\{3,4\}}$};
    
    \draw (18+72*1:3) node[above=.2cm] {$\mathbf v_{\{1,2\}}$};
    
    \draw (18+72*2:3) node[left=.2cm] {$\mathbf{v}_{\{1,2,3\}}$};
    
    \draw (18+72*3:3) node[below right] {$\mathbf v_{\{2,3\}}$};
    \draw (18+72*4:3) node[below left] {$\mathbf{v}_{\{2,3,4\}}$};
    \draw[blue,opacity=0.2,line width=14,rounded corners=1] (18+72*5:3.01)--(18+72*4:3.01)--(18+72*4:2.99)--(18.01+72*5:2.99)--(18.01+72*5:1.3)--(17.99+72*5:1.3)--(17.99+72*5:3);
    \draw[green,opacity=0.2,line width=14,rounded corners=1] (18+72*1:3.01)--(18+72*2:3.01)--(18+72*2:2.99)--(18.01+72*1:2.99)--(18.01+72*1:1.3)--(17.99+72*1:1.3)--(17.99+72*1:3);
    \draw[orange,opacity=0.2,line width=14,rounded corners=1] (18+72*3:3.01)--(18+72*2:3.01)--(18+72*2:2.99)--(18+72*3:2.99)--(18+72*4:2.99)--(18+72*4:3.01)--(18+72*3:3.01);
    \draw[teal,opacity=0.2,line width=14,rounded corners=1] (17.99+72*2:1.3)--(17.99+72*2:3)--(18.01+72*2:3)--(18.01+72*2:1.31)--(18+72*0:1.29)--(18+72*0:1.31)--(18+72*2:1.31);
    \draw[Magenta,opacity=0.2,line width=14,rounded corners=1] (18.01+72*3:1.3)--(18.01+72*1:1.3)--(17.99+72*1:1.3)--(17.99+72*3:1.3)--(17.99+72*0:1.3)--(18.01+72*0:1.3)--(18.01+72*3:1.3);
    \draw[Black,opacity=0.2,line width=14,rounded corners=1] (18.01+72*4:1.3)--(18.01+72*1:1.3)--(17.99+72*1:1.3)--(17.99+72*4:1.3)--(17.99+72*4:3)--(18.01+72*4:3)--(18.01+72*4:1.3);
    \draw[BrickRed] (18+72*0:3) node[above=.2cm] {\Large $1$};
    \draw[BrickRed] (18+72*1:3) node[right=.2cm] {\Large $4$};
    \draw[BrickRed] (18+72*2:3) node[above=.2cm] {\Large $0$};
    \draw[BrickRed] (18+72*3:3) node[left=.2cm] {\Large $2$};
    \draw[BrickRed] (18+72*4:3) node[right=.2cm] {\Large $2$};
    \draw[BrickRed] (18+72*0:1.3) node[below=.1cm] {\Large $1$};
    \draw[BrickRed] (18+72*1:1.3) node[above right=.1cm] {\Large $0$};
    \draw[BrickRed] (18+72*2:1.3) node[above=.2cm] {\Large $3$};
    \draw[BrickRed] (18+72*3:1.3) node[above left] {\Large $3$};
    \draw[BrickRed] (18+72*4:1.3) node[below=.1cm] {\Large $2$};
    \draw[BrickRed] (18+72*.4:3.7) node {$W_{10}(3,4,5,5,6)^{\trop}$};
    \draw[thin] (4.5,-2.6)--(4.5,3.5);
\end{tikzpicture}
\begin{tikzpicture}[scale=.9]
    \foreach \i in {0,1,2,3,4} {
    \draw (18+72*\i:1.3)--(162+72*\i:1.3);
    \draw (18+72*\i:1.3)--(18+72*\i:3);
    \draw (18+72*\i:3)--(90+72*\i:3);
    \filldraw (18+72*\i:1.3) circle(.05);
    \filldraw (18+72*\i:3) circle(.05);
    };
    \draw (18+72*0:1.3) node[above] {$\mathbf{v}_{\{2,5\}}$};
    \draw (18+72*1:1.3) node[above left] {$\mathbf{v}_{\{3,5\}}$};
    
    \draw (18+72*2:1.3) node[below left=-3pt] {$\mathbf v_{\{2,4,5\}}$};
    \draw (18+72*3:1.3) node[below right=-1pt] {$\mathbf v_{\{2,3,5\}}$};
    \draw (18+72*4:1.3) node[right] {$\mathbf v_{\{2,4\}}$};
    \draw (18+72*0:3) node[right=.2cm] {$\mathbf v_{\{3,4\}}$};
    
    \draw (18+72*1:3) node[above=.2cm] {$\mathbf v_{\{3,4,5\}}$};
    
    \draw (18+72*2:3) node[left=.2cm] {$\mathbf{v}_{\{4,5\}}$};
    
    \draw (18+72*3:3) node[below right] {$\mathbf v_{\{2,3\}}$};
    \draw (18+72*4:3) node[below left] {$\mathbf{v}_{\{2,3,4\}}$};
    \draw[Red,opacity=0.2,line width=14,rounded corners=1] (18+72*5:3.01)--(18+72*4:3.01)--(18+72*4:2.99)--(18+72*5:2.99)--(18+72*1:2.99)--(18+72*1:3.01)--(18+72*5:3.01);
    \draw[ProcessBlue,opacity=0.2,line width=14,rounded corners=1] (18+72*2:3.01)--(18+72*1:3.01)--(18+72*1:2.99)--(18.01+72*2:2.99)--(18.01+72*2:1.3)--(17.99+72*2:1.3)--(17.99+72*2:3);
    \draw[Sepia,opacity=0.2,line width=14,rounded corners=1] (18.01+72*3:3)--(18.01+72*3:1.3)--(17.99+72*3:1.3)--(17.99+72*3:3.01)--(18+72*4:3.01)--(18+72*4:2.99)--(18+72*3:2.99);
    \draw[ForestGreen,opacity=0.2,line width=14,rounded corners=1] (18.01+72*1:1.3)--(18.01+72*1:3)--(17.99+72*1:3)--(17.99+72*1:1.31)--(18+72*3:1.31)--(18+72*3:1.29)--(18+72*1:1.29);
    \draw[Fuchsia,opacity=0.2,line width=14,rounded corners=1] (18+72*0:1.31)--(18+72*2:1.31)--(18+72*2:1.29)--(18+72*0:1.29)--(18+72*3:1.29)--(18+72*3:1.31)--(18+72*0:1.31);
    \draw[Dandelion,opacity=0.2,line width=14,rounded corners=1] (18+72*4:1.31)--(18+72*2:1.31)--(18+72*2:1.29)--(18.01+72*4:1.29)--(18.01+72*4:3)--(17.99+72*4:3)--(17.99+72*4:1.3);
    \draw[BrickRed] (18+72*0:3) node[above=.2cm] {\Large $1$};
    \draw[BrickRed] (18+72*1:3) node[right=.2cm] {\Large $4$};
    \draw[BrickRed] (18+72*2:3) node[above=.2cm] {\Large $0$};
    \draw[BrickRed] (18+72*3:3) node[left=.2cm] {\Large $2$};
    \draw[BrickRed] (18+72*4:3) node[right=.2cm] {\Large $2$};
    \draw[BrickRed] (18+72*0:1.3) node[below=.1cm] {\Large $1$};
    \draw[BrickRed] (18+72*1:1.3) node[above right=.1cm] {\Large $0$};
    \draw[BrickRed] (18+72*2:1.3) node[above=.2cm] {\Large $3$};
    \draw[BrickRed] (18+72*3:1.3) node[above left] {\Large $3$};
    \draw[BrickRed] (18+72*4:1.3) node[below=.1cm] {\Large $2$};
\end{tikzpicture}
\caption{The balancing relations $B^5_{\{i,j\}}=4$ (left picture) and $B^1_{\{i,j\}}=7$ (right picture) for the tropicalization of $W_{10}(3,4,5,5,6)$. In each picture, the equally colored collections of vertices  all have the same sum. Example \ref{ex:M05RSpin} confirms that the sums of the light-green and light-purple collections of vertices in the left picture are both equal to 4. Note that we have named each ray $\mathbf v_I$ of $\M_{0,5}^{\trop}$ (corresponding to a vertex of the Petersen graph) using the convention $k\not\in I$---this is the only reason  the vertex labels of the two pictures differ.}
\label{fig:34556}
\end{figure}

\end{ex}

\subsubsection*{WDVV relations.} In $\overline{M}_{0,4}\cong \mathbb{P}^1$ we have the simple fact that any boundary divisor is equivalent in the Chow ring to the class of a point. Choosing any two of the three boundary divisors and subtracting them one obtains a relation, which we call a basic relation. Any relation obtained by pulling back via forgetful morphisms or pushing forward via gluing morphisms of a basic relation is called a WDVV relation.
Given a CohFT $\Omega$ on a vector space $V$, restricting a class $\Omega_{0,n}(v_1, \ldots, v_n)$ to a WDVV relation and integrating produces a relation among the numerical CohFT invariants, that we also call a WDVV relation. 
Explicitly, the relation obtained from pulling back a basic relation via some forgetful morphism has the following form. Denote by $\{e_\alpha\}$ a basis for $V$ and by  $\{e^\alpha\}$ the dual basis with respect to the CohFT metric. Choose four numbers $a,b,c,d$ in the index set $[n]$ and for a set $I$ denote  by ${\vec{v}_I} = \{v_i\}_{i\in I}$:
\begin{equation}\begin{aligned}\label{eq:wdvvrel}
\sum_{\alpha}\sum_{I\subseteq \{[n]\smallsetminus \{a,b,c,d\}\}}&\left(
\omega_{0, |I|+3}(v_a, v_b, \vec{v}_I, e_\alpha)\omega_{0, |I^c|+3}(e^\alpha, \vec{v}_{I^c}, v_c, v_d) \right. \\
& \left. - \omega_{0, |I|+3}(v_a, v_d, \vec{v}_I, e_\alpha)\omega_{0, |I^c|+3}(e^\alpha, \vec{v}_{I^c}, v_b, v_c)\right)= 0.
\end{aligned}\end{equation}

We now see that the numerical part of the WDVV relations have a natural analogue in the tropicalization.

\begin{thm}\label{thm:wdvv}
For any CohFT $\Omega$, consider the numerical CohFT invariants as unknowns to be determined; the following two collections of equations impose equivalent constraints among the invariants of $\omega$:
\begin{enumerate}
\item the balancing equations \eqref{eq:boomerang} for all tropical cycles $\Omega^{\trop}_{0,n}(v_1, \ldots, v_n)$. 
\item the collection of all WDVV relations.
\end{enumerate}
\end{thm}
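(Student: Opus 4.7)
The plan is to show by mutual containment that the linear spans of the two collections of equations in the space of linear functionals on numerical CohFT invariants coincide. I would proceed in two steps, following the dictionary announced in the introduction: balancing at the apex of a one-dimensional tropical cycle matches a forgetful-pullback WDVV relation, and balancing along a face of a higher-dimensional tropical cycle matches a gluing-pushforward WDVV relation.

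For the one-dimensional case, let $\alpha = \Omega_{0,n}(v_1,\ldots,v_n)$ have dimension one, so $\alpha^{\trop}=\sum_I a_I\mathbf{v}_I$ with $a_I=\int_{\overline{M}_{0,n}}D_I\cdot\alpha$; by Proposition~\ref{ax:RestrictToStratum} each $a_I$ is a bilinear pairing of numerical CohFT invariants summed over a basis. Fix four distinct indices $a,b,c,d$ and set $k=c$ in the notation of Section~\ref{sec:tctc}. Every subset $I$ containing $\{a,b,d\}$ but not $c$ contributes identically to $B^c_{\{a,b\}}$ and to $B^c_{\{a,d\}}$, so such terms cancel in the difference, leaving
\begin{equation*}
B^c_{\{a,b\}}-B^c_{\{a,d\}}=\sum_{\substack{\{a,b\}\subseteq I\\ c,d\notin I}}a_I-\sum_{\substack{\{a,d\}\subseteq I\\ b,c\notin I}}a_I.
\end{equation*}
This is exactly the integral against $\alpha$ of the pullback of the basic $\overline{M}_{0,4}$-relation $D^{(4)}_{\{a,b\}}-D^{(4)}_{\{a,d\}}=0$ via the forgetful morphism $\overline{M}_{0,n}\to\overline{M}_{0,4}$ retaining $a,b,c,d$, i.e.\ the WDVV equation \eqref{eq:wdvvrel} for this 4-tuple. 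A balancing equation with five distinct indices $\{k,i,j,i',j'\}$ decomposes as $B^k_{\{i,j\}}-B^k_{\{i',j'\}}=(B^k_{\{i,j\}}-B^k_{\{i,i'\}})+(B^k_{\{i,i'\}}-B^k_{\{i',j'\}})$, reducing to two four-index cases. Since every forgetful-pullback WDVV relation manifestly arises as such a difference, the two families coincide in the one-dimensional case.

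For higher-dimensional tropical cycles, I would reduce to the one-dimensional case via the CohFT splitting axiom. A codimension-one face $\tau$ of a $d$-dimensional cycle $\Omega^{\trop}_{0,n}(v_1,\ldots,v_n)$ is a cone of $M_{0,n}^{\trop}$ corresponding to a boundary stratum $\Delta_\tau\cong\prod_i\overline{M}_{0,n_i}$, and the star of $\tau$ in $M_{0,n}^{\trop}$ identifies canonically with $\prod_i M_{0,n_i}^{\trop}$. Proposition~\ref{ax:RestrictToStratum}, applied iteratively along $\tau$, expresses $\Omega_{0,n}(v_1,\ldots,v_n)|_{\Delta_\tau}$ as a sum of tensor products of CohFT classes on the factors, and the image of $\Omega^{\trop}_{0,n}(v_1,\ldots,v_n)$ in $Q_{[n]}/\langle\tau\rangle$ is the tropicalization of this restriction. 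Because this image is one-dimensional in the star, a dimension count forces all but one factor to have $n_i=3$, and the star-cycle becomes a genuine one-dimensional tropical cycle on a single factor $\overline{M}_{0,n_j}$. Applying the one-dimensional case to this factor yields a forgetful-pullback WDVV relation on $\overline{M}_{0,n_j}$; pushing forward via the gluing map $\Delta_\tau\hookrightarrow\overline{M}_{0,n}$ produces a WDVV relation on $\overline{M}_{0,n}$ in the form \eqref{eq:wdvvrel}, with the edge of $\tau$ contributing the basis-paired insertions $e_\alpha,e^\alpha$. The converse follows by reversing the reduction: any gluing-pushforward WDVV relation arises from the balancing of such a higher-dimensional cycle along $\tau$.

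The principal technical hurdle will be the canonical identification of the star of a cone in $M_{0,n}^{\trop}$ with a product of smaller tropical moduli spaces, together with its compatibility with tropicalization -- one must verify that the one-dimensional tropical cycle produced in the star genuinely coincides with the tropicalization of the restricted CohFT class predicted by the splitting axiom, not merely up to boundary contributions. This combinatorial compatibility is ultimately a reflection of the fact that the linear relations in Keel's presentation of $A^*(\overline{M}_{0,n})$ are precisely the forgetful-pullback $\overline{M}_{0,4}$-relations, the very same ingredient underpinning the WDVV formalism, so the matching on both the balancing and WDVV sides passes through the same intrinsic combinatorial data of the cone complex.
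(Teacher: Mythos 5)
Your one-dimensional argument is sound and, though phrased more concretely than the paper's, amounts to the same content: you derive the WDVV equation by direct cancellation of the coefficients $a_I$ with $\{a,b,d\}\subseteq I$, while the paper pushes the balanced 1-cycle forward along $F:M^{\trop}_{0,n}\to M^{\trop}_{0,4}$ and uses irreducibility of $M^{\trop}_{0,4}$; both yield the same relation, and your telescoping device for five distinct indices correctly covers the general balancing equation for a fixed apex.

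The higher-dimensional step contains a genuine error. You assert that because the restriction $\Omega_{0,n}(v_1,\ldots,v_n)|_{\Delta_\tau}$ is one-dimensional, ``a dimension count forces all but one factor to have $n_i=3$,'' so that the star-cycle lives on a single factor $\overline{M}_{0,n_j}$. This is not true: the stratum $\Delta_\tau\cong\prod_{\rm v}\overline{M}_{0,n_{\rm v}}$ can have arbitrarily many factors of dimension $\ge 1$. What dimension-counting actually yields is that the restricted class decomposes as a \emph{sum over vertices}, where in each summand exactly one factor carries a one-dimensional CohFT class and the remaining factors carry multiples of the point class (which exist on $\overline{M}_{0,n_{\rm v}}$ for every $n_{\rm v}\ge 3$, not just $n_{\rm v}=3$). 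Consequently the balancing equation along $\tau$ is a single vector equation in $Q_{[n]}/\langle\tau\rangle_{\mathbb{R}}$ receiving contributions from all vertices simultaneously, and one needs an argument to decouple them before the one-dimensional case can be invoked. The paper does exactly this: its Claim~\ref{claim:comb} establishes a linear injection $\Phi:\bigoplus_{\rm v}Q_{[n_{\rm v}]}\to Q_{[n]}/\langle\tau\rangle_{\mathbb{R}}$ under which the balancing sum splits as a direct sum of one-dimensional balancing conditions, one per vertex. Without that injectivity (or some equivalent decoupling), your reduction to a single factor silently throws away the interaction between vertices, and the converse implication ``WDVV $\implies$ balancing at arbitrary $\tau$'' does not go through. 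Note that the ``technical hurdle'' you flag at the end (compatibility of the star identification with tropicalization) is a separate, legitimate concern also addressed by Claim~\ref{claim:comb}, but it does not repair the $n_i=3$ mistake.
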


\begin{proof}[Proof of Theorem \ref{thm:wdvv}]
We prove this statement in two steps. First we show that for a CohFT class of dimension $1$, the fact that its tropicalization is balanced is equivalent to numerical WDVV relations that are pull-backs of the basic ones via forgetful morphisms (and so take the simple form from equation \eqref{eq:wdvvrel}). Next we show that for a CohFT class of arbitrary dimension, its balancing along a cone is equivalent to WDVV relations that also involve pushforwards with respect to gluing morphisms.

\medskip

\noindent \textsc{Step I.} Assume $\dim \Omega_{0,n}(v_1,\ldots, v_n) = 1$. Its tropicalization can be written as a linear combination of rays:
\begin{equation}
   \Omega^{\trop}_{0,n}(v_1,\ldots, v_n) = \sum_{\substack{a\in I\subseteq [n]\smallsetminus k\\ 2\leq|I|\leq n-2}} a_I \langle {\mathbf{v}_I} \rangle_{\mathbb{R}_{\geq 0}}\subseteq Q_{[n]},
\end{equation}
with
\begin{equation}\label{eq:defcoeff}
a_I = \sum_{\alpha}
\omega_{0, |I|+1}(\vec{v}_I, e_\alpha)\omega_{0, |I^c|+1}(e^\alpha, \vec{v}_{I^c}),
\end{equation}
where $\omega_{0, |I|+1}(\vec{v}_I, e_\alpha)$ and $\omega_{0, |I^c|+1}(e^\alpha, \vec{v}_{I^c})$ are numerical CohFT invariants.

Assume $ \Omega^{\trop}_{0,n}(v_1,\ldots, v_n)$ is balanced, choose $a,b,c,d \in [n]$ and observe that the linear  projection $Q_{[n]}\to Q_{\{a,b,c,d\}}$ restricts to the forgetful morphism  $F:M^{\trop}_{0,n}\to M^{\trop}_{0,4}$, where the marks that are remembered are $a,b,c,d$. A useful fact used in tropical intersection theory (see e.g. \cite[Sec. 4]{AllermannRau2010}) is that the pushforward of a weighted, balanced fan via a map of fans is also a weighted, balanced fan. We  apply this fact to the map $F$ above. Here, as $\Omega_{0,n}(v_1, \dots, v_n)$ is a sum of rays, its pushforward along $F$ is a sum of rays in $M_{0,4}^{\trop}$.  Since $M_{0,4}^{\trop}$ is an irreducible fan, the only balanced weight functions are constant on all three rays, hence $F_*( \Omega_{0,n}(v_1, \dots, v_n))$ is a constant times the sum of the three rays of $M_{0,4}^{\trop}$.

The coefficient of the ray $\langle \mathbf{v}_{\{a,b\}}\rangle_{\mathbb{R}_{\geq 0}}$ in $F_\ast(\Omega^{\trop}_{0,n}(v_1,\ldots, v_n))$ equals the sum of the coefficients $a_I$ where $a,b\in I$ and $c,d \in I^c$, and similarly the coefficient of the ray $\langle \mathbf{v}_{\{a,d\}}\rangle_{\mathbb{R}_{\geq 0}}$ in $F_\ast(\Omega^{\trop}_{0,n}(v_1,\ldots, v_n))$ equals the sum of the coefficients $a_I$ where $a,d\in I$ and $b,c \in I^c$.
Imposing the equality of these coefficients and using \eqref{eq:defcoeff}, one obtains the WDVV relation \eqref{eq:wdvvrel}. Thus we have proven that satisfying balancing implies satisfying WDVV. 

Conversely, assume that the WDVV relations are satisfied among the numerical invariants of $\Omega$. We want to show that $ \Omega^{\trop}_{0,n}(v_1,\ldots, v_n)$ is balanced. By construction, ${\mathbf{v}_I}$ is the primitive vector for  the ray $\langle {\mathbf{v}_I} \rangle_{\mathbb{R}_{\geq 0}}$, and therefore  this means showing that 
\begin{equation}\label{eq:onedimbal}
    \sum_{\substack{I\subseteq [n]\smallsetminus k \\ 2\leq|I|\leq n-2}} a_I  {\mathbf{v}_I} =0\in Q_{[n]}.
\end{equation}
Note we have fixed a $k \in [n]$ to not double count each ray. We may use \eqref{eq:relvr} to rewrite:
\begin{equation}\label{eq:redtogenset}
    \sum_{\substack{I\subseteq [n]\setminus\{k\} \\ 2\leq|I|\leq n-2}} a_I  {\mathbf{v}_I} = \sum_{\{i,j\} \in [n]\setminus\{k\}} \left( \sum_{\substack{\{i,j\} \subseteq I\subseteq [n]\smallsetminus \{k\}\\ |I|\leq n-2}} a_I
\right) \mathbf{r}_{i,j}^k.
\end{equation}
 Subtracting the coefficient of $\mathbf{r}_{a,b}^c$ and the coefficient of $\mathbf{r}_{a,d}^c$ 
 in \eqref{eq:redtogenset} one obtains the WDVV relation \eqref{eq:wdvvrel}, which we are assuming holds. Since this happens for all choices of $a,b,c,d\in [n]$, all the coefficients in \eqref{eq:redtogenset} are equal and so \eqref{eq:onedimbal} holds, concluding the first part of the proof.

 \medskip
 
\noindent \textsc{Step II.} Let $\Omega_{0,n}(v_1, \ldots, v_n)$ be a CohFT class of pure dimension $d$, and $\tau$ a $(d-1)$-dimensional cone in $M_{0,n}^{\trop}$ corresponding to a graph $\Gamma$. The corresponding stratum $\Delta_\tau$ is isomorphic to a product $\prod_{{\rm v} \in V(\Gamma)} \overline{M}_{0, n_{\rm v}}$, where $n_{\rm v}$ denotes the valence of the vertex ${\rm v}$ of $\Gamma$. Denote by $\phi: \bigcup_{{\rm v}\in V(\Gamma)} \{{\rm v}\}\times [n_{\rm v}]\to \mathcal{P}([n])$ the function that assigns to each half-edge $h$ of $\Gamma$ the set of marks in the connected component of $\Gamma\smallsetminus v$ that contains $h$.

Choose a $k\in [n]$ and for every vertex ${\rm v}$ denote by $k_{\rm v}$ the unique element of $[n_{\rm v}]$ such that $k\in \phi([k_{\rm v}])$.

\begin{claim}\label{claim:comb}
  For every $I_{\rm v}\subseteq [n_{\rm v}]$ not containing $k_{\rm v}$, the assignment 
\begin{equation}
    \mathbf{v}_{I_{\rm v}}\mapsto \mathbf{v}_{\cup_{i\in I_{\rm v}} \phi(\rm v,i)}
\end{equation}
gives a linear injection 
\begin{equation}
   \Phi: \bigoplus_{{\rm v}\in V(\Gamma)} Q_{[n_{\rm v}]} \to Q_{[n]}/\langle \tau\rangle_{\mathbb{R}}.
\end{equation}  
\end{claim}

We defer the proof of the claim to not break the flow of this proof. For dimension reasons, the CohFT class of dimension $d$ restricted to the stratum associated to the cone $\tau$ can be  nonzero only if it decomposes as a sum over the vertices ${\rm v}$ of $\Gamma$ where each summand is a product of a one-dimensional CohFT class $\Omega^1_{\rm v}$ supported on the vertex $ {\rm v}$ with a multiple of the class of a point on all the other vertices.  In formulas, we have:
\begin{equation}\label{eq:cvs in formula}
   \Omega_{0,n}(v_1, \ldots, v_n)|_{\Delta_\tau} = 
\sum_{{\rm v}\in V(\Gamma)} {\gl_\Gamma}_\ast \left(c_{\rm v}\pi_{\rm v}^\ast(\Omega^1_{\rm v})\boxtimes \prod_{\rm w\not= v} \pi_{\rm w}^\ast(pt)
\right), 
\end{equation}
where $\gl_\Gamma: \prod_{{\rm v} \in V(\Gamma)} \overline{M}_{0, n_{\rm v}}\to\overline{M}_{0, n} $ is the gluing morphism, $\pi_{\rm v}$ denotes  the projection on the factor corresponding to vertex ${\rm v}$ and the coefficients $c_{\rm v}$ are real numbers determined by the splitting axioms. These coefficients are obtained from numerical CohFT invariants and the CohFT metric, but this information will not be explicitly used in this argument.

Any  $d$-dimensional cone $\sigma$ in $M^{\trop}_{0,n}$ of which $\tau$ is a face
corresponds to a graph $\Gamma_\sigma$ with an edge contraction $c_{e_\sigma}:\Gamma_\sigma\to \Gamma$.
Denote by $\hat{\rm v}:\{\sigma \succ \tau \}\to V(\Gamma)$ the function that assigns to each $d$-dimensional cone $\sigma$ the vertex $c_{e_\sigma}(e_\sigma)$ of $\Gamma$ that is the image of the contracted edge. The edge $e_\sigma$ also determines a two part partition of the set of indices in $[n_{\hat{\rm v}(\sigma)}]$, i.e. a ray $\rho_\sigma$ with primitive vector $\mathbf{v}_{I_\sigma}$ in $M^{\trop}_{0,n_{\hat{\rm v}}(\sigma)} \subseteq Q_{[n_{\hat{\rm v}(\sigma)}]}$.
By the CohFT splitting axioms, the restriction of $\Omega_{0,n}(v_1, \ldots, v_n)$ to $\Delta_\sigma$ satisfies
\begin{equation}
   \int_{\Delta_\sigma}\Omega_{0,n}(v_1, \ldots, v_n) = c_{\rm v}\ \int_{\overline{M}_{0, n_{{\rm \hat{v} 
 (\sigma)}}}}\Omega^1_{\rm v}\cdot \Delta_{\rho_\sigma}. 
\end{equation}
We can now analyze the balancing condition along the face $\tau$:
\begin{equation}\label{eq:batau}
    \sum_{\sigma \succ \tau} \Omega_{0,n}(v_1, \ldots, v_n)|_{\Delta_\sigma} {\mathbf{u}_{\tau/\sigma}} = \sum_{{\rm v}\in V(\Gamma)} c_{\rm v} \Phi\left(\sum_{\sigma \in \hat{\rm v}^{-1}({\rm v})}
    \left(
    \int_{\overline{M}_{0,n_{\hat{\rm v}}(\sigma)}}\Omega^1_{\rm v}\cdot \Delta_{\rho_\sigma}
\right)    \mathbf{v}_{I_\sigma}
\right).
\end{equation}
The vanishing of the left hand side of \eqref{eq:batau} gives the balancing of $\Omega_{0,n}(v_1, \ldots, v_n)$ along the higher dimensional face $\tau$; on the right hand side this quantity is expressed as a sum over the vertices $\rm v$ of $\Gamma$; inside the large parenthesis, we recognize the weighted sum of normal vectors whose vanishing gives balancing  for the one dimensional CohFT class $\Omega^1_{\rm v}$ at the cone point of $\overline{M}_{0,n_{\hat{\rm v}}(\sigma)}$. Then we take the image via $\Phi$, which is a linear injection by Claim \ref{claim:comb}. Thus \eqref{eq:batau} shows that the balancing equation for a CohFT class along a higher dimensional face $\tau$ reduces  to a collection of balancing equations for one-dimensional CohFT classes. The theorem is proved by invoking Step I. 
\end{proof}

\begin{proof}[Proof of Claim \ref{claim:comb}]
This statement is well-known to the experts, but we include a proof here since we are not aware of a reference in the literature. We begin by proving the codimension $1$ case. Let $\tau$ be a ray in $M_{0,n}^{\trop}$ corresponding to a graph $\Gamma$ with only one edge separating the legs into two subsets $I$ and $I^c$. Without loss of generality, we assume that $k\not\in I$ and that the marks in $I$ are adjacent to ${\rm v}_1$, those in $I^c$ to ${\rm v_2}$. In this case, $n_{\rm v_1} = |I|+1$, and we denote the additional mark (corresponding to the germ of the edge) by $k_{\rm v_1}$, following the notation in the proof of Theorem \ref{thm:wdvv}.
The one dimensional linear space of $Q_{[n]}$ containing the ray $\tau$ is spanned by 
the vector $\mathbf{v}_\tau = \sum_{i,j\in I} \mathbf{r}_{i,j}^k$.
The map $\Phi|_{Q_{[n_{{\rm v}_1}]}}$ is a linear isomorphism onto its image in $Q_{[n]}/\langle\tau\rangle_{\mathbb{R}}$: it is defined by $\mathbf{r}_{i,j}^{k_{{\rm v}_1}}\mapsto \mathbf{r}_{i,j}^k$ and the image of the relation among the generators of $Q_{[n_{{\rm v}_1}]}$ is precisely the vector $\mathbf{v}_\tau$. Denoting by $x$ the mark corresponding to the germ of the edge attaching to ${\rm v_2}$, the map $\Phi|_{Q_{[n_{{\rm v}_2}]}}$ is defined as follows:
\begin{align*}
 \mathbf{r}_{i,j}^{k}\mapsto \mathbf{r}_{i,j}^k\ \ \ \ \    &  \ \ \ \ \ 
 \mbox{if $i,j$ in $I^c$}\\
  \mathbf{r}_{j,x}^{k}\mapsto \sum_{i\in I}\mathbf{r}_{i,j}^k & \ \ \ \ \  \mbox{if $j$ in $I^c$},
\end{align*}
where we have already set $\mathbf{v}_\tau = 0$. It is immediate  to check that the restriction $\Phi|_{Q_{[n_{{\rm v}_2}]}}$ is injective and that $\Phi{(Q_{[n_{{\rm v}_1}]}\oplus 0)}\cap \Phi{(0\oplus Q_{[n_{{\rm v}_1}]})} = 0$, proving the claim in the case $\dim \tau = 1$. To prove the claim for a  general cone $\tau$ of dimension $d$ (corresponding to a graph $\Gamma$) one can choose arbitrarily an ordering of the edges of $\Gamma$, and iterate this construction $d$ times, adding one edge at a time  until $\Gamma$ is obtained, and composing all the resulting linear injections.  
\end{proof}

\section{Recursive structure of $r$-spin invariants}
\label{sec:recursivestructure}

In this section we introduce a   recursion for $r$-spin invariants, deducible from either the WDVV equations or (equivalently by Theorem \ref{thm:TropicalCycle}) the balancing relations of tropical cycles. We will use the recursion extensively in proving Theorem \ref{thm:ClosedFormula} and \ref{thm:RespectsOrdering}.

\begin{thm}\label{thm:Recursion}
Let $\vec m=(m_1,\ldots,m_n)$ be a 
 numerical $r$-spin monodromy vector. 
 Fix distinct elements $i,j,k\in[n]$ with $m_i>1$ and $m_j<r$. Then
\begin{align}\label{eq:Recursion}
    w_r(\vec m)-w_r(m_1,\ldots,m_{i}-1,\ldots,m_j+1,\ldots,m_n)=(n-3)\cdot T_r^{i,j,k}(m_1,\ldots,m_n)
\end{align}
where, for distinct elements $i,j,k\in [n]$, 
\begin{align}\label{eq:TIJK}
    T_r^{i,j,k}(m_1,&\ldots,m_n):=\frac{1}{r}\Bigl(\delta_{m_j+m_k\ge r+1}w_r(m_i-1,m_1,\ldots,\hat{m_i},\hat{m_j},\hat{m_k},\ldots,m_{n},m_j+m_k-r)\nonumber\\
    &-\delta_{m_i+m_k\ge r+2}w_r(m_j,m_1,\ldots,\hat{m_i},\hat{m_j},\hat{m_k},\ldots,m_{n},m_i+m_k-r-1)\Bigr);
\end{align}
 the symbol $\delta_{a\geq b}$ equals one when the inequality is satisfied and zero otherwise.
\end{thm}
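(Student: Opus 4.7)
The plan is to derive the recursion by applying the balancing condition, equivalently (by Theorem~\ref{thm:wdvv}) a WDVV relation, to an auxiliary one-dimensional Witten class on $\overline M_{0,n+1}$ and then identifying the resulting linear combination of numerical invariants with the desired formula.

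First, I introduce the monodromy vector $\vec m^{\ast}=(m_1,\ldots,m_i-1,\ldots,m_n,2)\in\{1,\ldots,r\}^{n+1}$, obtained from $\vec m$ by decrementing $m_i$ by one and appending a new mark of monodromy $2$. Using \eqref{eq:RankOfWittenBundle}, one checks that $\sum_\ell m^{\ast}_\ell=(n-1)(r+1)-r$, so $W_r(\vec m^{\ast})$ is a one-dimensional cycle and $W_r(\vec m^{\ast})^{\trop}$ is a one-dimensional balanced subfan of $M_{0,n+1}^{\trop}$.

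Next, I compute the coefficients $a_I$ of rays of this tropical cycle via Proposition~\ref{ax:RestrictToStratum} and the gluing formula \eqref{monodromy at nodes}. For $I=\{i,n+1\}$ the $3$-point side equals $1$ by Proposition~\ref{prop:34Point}(1) while the node monodromy $r-m_i$ cancels the shift, giving $a_I=w_r(\vec m)$; similarly $a_{\{j,n+1\}}=w_r(\vec m')$. For $|I|=3$, the $4$-point side evaluates to $\tfrac{1}{r}$ or $0$ by Proposition~\ref{prop:34Point}(2), and the complementary $(n-1)$-point side is exactly an invariant appearing in $T_r^{i,j,k}$, with the numerical conditions encoded by the $\delta$ symbols. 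For $|I|\ge 4$, a dimension count combined with Propositions~\ref{prop:1DVanish}, \ref{prop:RamondVanishing} and \ref{prop:Insertion1Vanish} shows the contributions either vanish or cancel pairwise.

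Finally, I apply the balancing relation $B^{q}_{\{i,n+1\}}=B^{q}_{\{j,n+1\}}$ at the origin for a suitably chosen base mark $q\in[n+1]$. Subsets $I$ containing both $i$ and $j$ cancel between the two sides; the size-two terms reproduce $w_r(\vec m)-w_r(\vec m')$ on the left; and the remaining size-three terms, indexed naturally over the $n-3$ elements of $L=[n]\setminus\{i,j,k\}$ (or, with an appropriate choice of $q$, over $(L\setminus\{q\})\cup\{k\}$), combine to give $(n-3)\,T_r^{i,j,k}(\vec m)$ on the right.

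The main obstacle is the combinatorial bookkeeping confirming that each of the $n-3$ size-three contributions matches the same $T_r^{i,j,k}$. This rests on a symmetry among the $(n-1)$-point invariants that may be established either directly from the product structure in Proposition~\ref{ax:RestrictToStratum} together with Proposition~\ref{prop:34Point}(2) and the form of the gluing monodromies, or by iterating additional balancing relations to reduce to the base case $n=4$, which is essentially the single non-trivial balancing on $\overline M_{0,5}$ illustrated in Example~\ref{ex:M05RSpin}.
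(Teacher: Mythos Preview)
Your strategy is essentially that of the paper: introduce $\vec m^{\ast}$ by appending an insertion $2$ and decrementing $m_i$, then extract the recursion from a balancing/WDVV relation on the one-dimensional class $W_r(\vec m^{\ast})$. After taking complements $I\mapsto I^c$ and setting $q=k$, your relation $B^{q}_{\{i,n+1\}}=B^{q}_{\{j,n+1\}}$ is the same WDVV equation the paper uses, and your analysis of the $\lvert I\rvert=2,3$ terms is correct.

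The genuine gap is your treatment of the $\lvert I\rvert\ge4$ terms. These correspond to splittings in which the side carrying the $2$-insertion has at least five marks, and you assert that Propositions~\ref{prop:1DVanish}, \ref{prop:RamondVanishing}, \ref{prop:Insertion1Vanish} make these terms vanish or cancel. They do not: Proposition~\ref{prop:Insertion1Vanish} only applies when an insertion equals~$1$, Proposition~\ref{prop:1DVanish} does not force the relevant factor to vanish, and there is no pairing producing cancellation between the two sides. What is actually needed is the stronger vanishing $w_r(\vec m)=0$ whenever $n\ge5$ and some $m_i=2$. The paper isolates this as Claim~\ref{cl:Insertion2Vanishing} and proves it separately by induction on $n$, using a \emph{different} WDVV relation (one that keeps both $2$-insertions $m_1^{**}=m_{n+1}^{**}=2$ on the same side and exploits Observation~\eqref{eq:SecondObs}). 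Without this auxiliary claim, your balancing equation has uncontrolled higher-order terms and the argument does not close.

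A smaller point: for the final step you propose establishing the equality of the $n-3$ size-three contributions ``directly from the product structure'' or by ``iterating additional balancing relations to reduce to $n=4$.'' The paper's argument is simpler. Having derived $w_r(\vec m)-w_r(\vec m')=\sum_{\ell\ne i,j,q}T_r^{i,j,\ell}(\vec m)$, one simply observes that the left side does not depend on the auxiliary mark $q$; rerunning the derivation with a different $q'$ and subtracting yields $T_r^{i,j,q}=T_r^{i,j,q'}$ directly.
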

\begin{remark}
    The left side of \eqref{eq:Recursion} is independent of $k,$ which implies that $T_r^{i,j,k}(m_1,\ldots,m_n)$ is independent of $k$.
\end{remark}

\begin{proof}
    Since $w_r(\vec m)$ is invariant under permuting the entries of $\vec m$, we may assume $i=1$ and $j=2.$ By Theorem \ref{thm:wdvv}, we may equivalently use either the WDVV equations or the balancing condition for tropical cycles to carry out the proof. Let $$\vec m^{*}=(m_1^*,\ldots,m_{n+1}^*)=(m_1-1,m_2,m_3,\ldots,m_n,2),$$ and consider the 1-dimensional tropical cycle $W_r(\vec m^{*})$. The balancing condition implies
\begin{align*}
    B^{n+1}_{\{1,3\}}(W_r(\vec m^{*}))=\sum_{\substack{J\subseteq\{1,\ldots,n+1\}\\2\le\abs{J}\le n-1\\1,3\in J,\thickspace n+1\not\in J}}\int_{\Mbar_{0,n}}W_r(\vec m^{*})|_{D_J}=\sum_{\substack{J\subseteq\{1,\ldots,n+1\}\\2\le\abs{J}\le n-1\\2,3\in J,\thickspace n+1\not\in J}}\int_{\Mbar_{0,n}}W_r(\vec m^{*})|_{D_J}=B^{n+1}_{\{2,3\}}(W_r(\vec m^{*})).
\end{align*}
Terms $W_r(\vec m^{*})|_{D_J}$ with $\{1,2,3\}\subseteq J$ appear on both sides of the above equation, so we may remove them, obtaining a WDVV equation:
\begin{align}\label{eq:SpecialBoomerang}
    \sum_{\substack{J\subseteq\{1,\ldots,n+1\}\\1,3\in J,\thickspace2,n+1\not\in J}}\int_{\Mbar_{0,n}}W_r(\vec m^{*})|_{D_J}&=\sum_{\substack{J\subseteq\{1,\ldots,n+1\}\\2,3\in J,\thickspace1,n+1\not\in J}}\int_{\Mbar_{0,n}}W_r(\vec m^{*})|_{D_J}.
\end{align}
By the CohFT property (Proposition \ref{ax:RestrictToStratum}), we have:
\begin{align}\label{eq:CohFTProperty}
\int_{\Mbar_{0,n}}W_r(\vec m^*)|_{D_J}=
w_r(( m_i^*)_{i\in J},m_J^*))w_r((m_i^*)_{i\not\in J},m_{J^c}^*),
\end{align}
where $m_J,m_{J^c}$ are as in \eqref{monodromy at nodes}. Note that by Proposition \ref{prop:1DVanish}, \eqref{eq:CohFTProperty} vanishes unless $$(\abs{J}-1)(r+1)-r<\sum_{i\in J}m_i^*<(\abs{J}-1)(r+1).$$
We now apply the following additional vanishing property, which we prove after finishing this proof.
\begin{claim}\label{cl:Insertion2Vanishing}
Let $n\ge 5$ and suppose $\vec m=(m_1,\ldots,m_n)$ is a numerical $r$-spin monodromy vector with $m_i=2$ for some $i$. Then $w_r(\vec m)=0$.
\end{claim}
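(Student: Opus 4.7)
The plan is to derive Claim \ref{cl:Insertion2Vanishing} directly from the recursion in Theorem \ref{thm:Recursion} together with the analogous ``insertion-$1$'' vanishing in Proposition \ref{prop:Insertion1Vanish}. A single application of the recursion should reduce every $w_r$-invariant that appears on the right-hand side to one of these already-established vanishings.

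First, I would reduce to the case where no entry of $\vec m$ equals $r$. Otherwise, Proposition \ref{prop:RamondVanishing} immediately gives $W_r(\vec m)=0$, and hence $w_r(\vec m)=0$. Under this reduction every $m_\ell\le r-1$, and in particular any choice of indices distinct from $i$ automatically satisfies $m_j,m_k<r$. Since $n\ge 5$, I can pick distinct $i,j,k\in[n]$ with $m_i=2$, so the hypotheses of Theorem \ref{thm:Recursion} are met.

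The heart of the argument is the following computation. Applying \eqref{eq:Recursion} yields
$$w_r(\vec m)-w_r(m_1,\ldots,m_i-1,\ldots,m_j+1,\ldots,m_n)=(n-3)\,T_r^{i,j,k}(\vec m).$$
The subtracted term on the left is $w_r$ of a length-$n$ monodromy vector containing the entry $m_i-1=1$, so Proposition \ref{prop:Insertion1Vanish} (applicable because $n\ge 5\ge 4$) kills it. Examining the definition \eqref{eq:TIJK}, I would then check that both summands of $T_r^{i,j,k}(\vec m)$ vanish as well: the first summand is again a $w_r$-invariant of a length-$(n-1)\ge 4$ vector that contains the entry $m_i-1=1$, hence is zero by Proposition \ref{prop:Insertion1Vanish}; the second summand carries the indicator $\delta_{m_i+m_k\ge r+2}=\delta_{2+m_k\ge r+2}=\delta_{m_k\ge r}$, which is $0$ under the reduction $m_k\le r-1$. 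Therefore $T_r^{i,j,k}(\vec m)=0$, and so $w_r(\vec m)=0$.

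There is no substantial obstacle beyond bookkeeping. The only subtle step is the initial reduction to the case $m_\ell\le r-1$ for all $\ell$: without it, the indicator $\delta_{m_k\ge r}$ could be nonzero and one would need an ad-hoc case split. With that reduction in place, the length counts ($n$ for the LHS reduction, $n-1$ for the pieces of $T_r^{i,j,k}$) both meet the $n\ge 4$ hypothesis of Proposition \ref{prop:Insertion1Vanish} thanks to the standing assumption $n\ge 5$.
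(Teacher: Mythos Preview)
Your argument is circular. In the paper's logical structure, Claim \ref{cl:Insertion2Vanishing} is stated \emph{inside} the proof of Theorem \ref{thm:Recursion} precisely because it is an essential ingredient of that proof: the WDVV/balancing relation for $\vec m^{*}=(m_1-1,m_2,\ldots,m_n,2)$ on $\Mbar_{0,n+1}$ produces many terms, and it is Claim \ref{cl:Insertion2Vanishing} (applied to the factor containing the insertion $m_{n+1}^{*}=2$) that kills all terms with $\abs{J^c}\ge4$, leaving only the two surviving shapes that combine into \eqref{eq:Recursion}. In particular, the term with $\abs{J}=2$ yields a length-$n$ invariant with an insertion of $2$, so establishing Theorem \ref{thm:Recursion} for length-$n$ vectors already requires Claim \ref{cl:Insertion2Vanishing} at length $n$. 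You therefore cannot invoke the recursion \eqref{eq:Recursion} at length $n$ to prove the Claim at length $n$.

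Nor can this be repaired by a joint induction. Assuming the Claim for lengths $5,\ldots,n-1$ only yields Theorem \ref{thm:Recursion} for lengths up to $n-1$, not at length $n$, which is what your reduction uses. This is why the paper proceeds differently: it proves Claim \ref{cl:Insertion2Vanishing} directly from a separate WDVV relation (balancing for the auxiliary vector $\vec m^{**}=(2,m_2-1,m_3,\ldots,m_n,2)$ with the pair $\{1,2\}$ versus $\{1,4\}$), by induction on $n$ with an explicit base case $n=5$, and only afterwards completes the proof of Theorem \ref{thm:Recursion}. Your observation that the terms in $T_r^{i,j,k}$ would collapse via Proposition \ref{prop:Insertion1Vanish} and the indicator $\delta_{m_k\ge r}$ is correct as a computation, but it presupposes a statement that, in this paper, is logically downstream of the Claim you are trying to prove.
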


Since $m_{n+1} = 2$, Claim \ref{cl:Insertion2Vanishing} and \eqref{eq:CohFTProperty} imply that any term of either side of \eqref{eq:SpecialBoomerang} with $|J^c|>4$, 
 or equivalently $\abs{J} < n-3$,  vanishes. The remaining terms on the left are: 
\begin{itemize}
\item The term with $J=[n+1]\setminus\{2,n+1\}$, i.e. $$w_r(m_1-1,m_3,\ldots,m_n,m_2+1)w_r(m_2,2,r-1-m_2)=w_r(m_1-1,m_2+1,m_3,\ldots,m_n),$$ and
\item The terms with $J=[n+1]\setminus\{2,k,n+1\}$ for $k\in\{4,\ldots,n\},$ namely \begin{align*}\sum_{k=4}^nw_r(m_1-1,m_3,m_4,\ldots,&\hat m_k,\ldots,m_n,m_2+m_k-r)w_r(m_2,m_k,2,2r-m_2-m_k)\\&=\frac{1}{r}\sum_{k=4}^n\delta_{m_2+m_k\ge r+1}\cdot w_r(m_1-1,m_3,\ldots,\hat m_k,\ldots,m_n,m_2+m_k-r).\end{align*}
\end{itemize} 
Identical calculations show that the right side of \eqref{eq:SpecialBoomerang} is $$w_r(\vec m)+\frac{1}{r}\sum_{k=4}^n\delta_{m_1+m_k\ge r+2}\cdot w_r(m_2,m_3,\ldots,\hat m_k,\ldots,m_n,m_1+m_k-r-1).$$ Combining all terms above yields \begin{align}\label{eq:Presymmetry}
    w_r(\vec m)-w_r(m_1-1,m_2+1,m_3,\ldots,m_n)&=\sum_{k=4}^nT^{1,2,k}(\vec m).
\end{align}
Finally, we observe that the left side of \eqref{eq:Presymmetry} is invariant under permuting $m_3,\ldots,m_n$, hence the right side is also. It follows that the $n-3$ summands on the right side of \eqref{eq:Presymmetry} are all equal, yielding \eqref{eq:Recursion}.
\end{proof}
\begin{proof}[Proof of Claim \ref{cl:Insertion2Vanishing}]
   If $m_i=r$ for some $i\in[n],$ then $w_r(\vec m)=0,$ so assume $m_i\le r-1$ for all $i\in[n]$. Suppose $m_1=2.$ Define the monodromy vector 
    $$\vec m^{**}=(m_1^{**},\ldots,m_{n+1}^{**})=(2,m_2-1,m_3,\ldots,m_n,2).$$ 
    Analogously to \eqref{eq:SpecialBoomerang}, the balancing condition $B^{3}_{\{1,2\}}(W_r(\vec m^{**}))=B^3_{\{1,4\}}(W_r(\vec m^{**}))$ for the 1-dimensional tropical cycle $W_r(\vec m^{**})$ implies the WDVV equation \begin{align}\label{eq:WDVVInsertion2}
    \sum_{\substack{J\subseteq\{1,\ldots,n+1\}\\1,2\in J,\thickspace3,4\not\in J}}\int_{\Mbar_{0,n}}W_r(\vec m^{**})|_{D_J}=\sum_{\substack{J\subseteq\{1,\ldots,n+1\}\\1,4\in J,\thickspace2,3\not\in J}}\int_{\Mbar_{0,n}}W_r(\vec m^{**})|_{D_J}.
    \end{align}
   
    First note that using the constraints $m_i \le r-1$, $m_1=2$, $n \ge 5$, and $\sum_{i=1}^{n} m_i = (n-2)(r+1)$, one can deduce that:
    \begin{align*}\label{eq:FirstObs}
        \text{$m_i+m_j\ge r+3$ for all distinct $i,j\in\{2,\ldots,n\}$}.\tag{$A$}
    \end{align*}
    Second, using $m_1^{**}=m_{n+1}^{**}=2$, $m_i^{**} \le r-1$, and $\sum_{i=1}^{n+1}m_i=(n-1)(r+1)-r$ we have that if $1,n+1 \in J$ then:
    $$
    \sum_{i\in {J^c}}m_i\geq(n-1)(r+1)-r-4-(\abs{J}-2)(r-1)=(\abs{J^c}-1)(r+1)+(2\abs{J}-7).
    $$
 Proposition~\ref{prop:1DVanish} implies $W_r(\vec m^{**})|_{D_J}=0$ if $\abs{J}\geq 4$. If $\abs{J}=3$, then $W_r(\vec m^{**})|_{D_J}$ could be nonzero only if $m_{J^c}=1$. But in this case $n\ge5$ implies $\abs{J^c}\ge3$, so Propositions~\ref{ax:RestrictToStratum} and \ref{prop:Insertion1Vanish} imply $W_r(\vec m^{**})|_{D_J}=0$. To summarize:
    \begin{align*}
    \text{If $\abs{J}\ge3$ and $1, n+1\in J$, then $\int_{\Mbar_{0,n}}W_r(\vec m^{**})|_{D_J}=0$.}\tag{$B$}\label{eq:SecondObs}
    \end{align*}
We now show $w_r(\vec m)=0$ by induction on $n$, with base case $n=5$. If $n=5$, then a straightforward analysis using Proposition~\ref{ax:RestrictToStratum}, Proposition~\ref{prop:34Point}, and Observations \eqref{eq:FirstObs} and \eqref{eq:SecondObs} imply the following about the terms in the left side of \eqref{eq:WDVVInsertion2}: 
\begin{itemize}
    \item  When $J=\{1,2\}$, we have $\int_{\Mbar_{0,n}}W_r(\vec m^{**})|_{D_J}=w_r(2, m_2-1, r-m_2) \cdot w_r(\vec m)=w_r(\vec m).$
    \item When $J=\{1,2,5\}$, we use \eqref{eq:FirstObs} to see that $\int_{\Mbar_{0,n}}W_r(\vec m^{**})|_{D_J}= (1/r^2)\cdot\delta_{m_2+m_5\ge r+3}=1/r^2$.
    \item When $J=\{1,2,6\}$,  we use \eqref{eq:SecondObs} to see that $\int_{\Mbar_{0,n}}W_r(\vec m^{**})|_{D_J}=0$. 
    \item  When $J=\{1,2,5,6\}$, we use \eqref{eq:SecondObs} to see that $\int_{\Mbar_{0,n}}W_r(\vec m^{**})|_{D_J}=0$.
\end{itemize}
By the same arguments, the right side of \eqref{eq:WDVVInsertion2} has the following terms:
    \begin{itemize}
        \item When $J=\{1,4\}$, we have $\int_{\Mbar_{0,n}}W_r(\vec m^{**})|_{D_J}=w_r(2,m_2-1,m_3,m_4+1,m_5)$.
        \item When $J=\{1,4,5\}$, we use \eqref{eq:FirstObs} to see that $\int_{\Mbar_{0,n}}W_r(\vec m^{**})|_{D_J}=(1/r^2)\cdot\delta_{m_2+m_3\ge r+2}=1/r^2$.
        \item When $J=\{1,4,6\}$, we use \eqref{eq:SecondObs} to see that $\int_{\Mbar_{0,n}}W_r(\vec m^{**})|_{D_J}=0$. 
        \item When $J=\{1,4,5,6\}$, we use \eqref{eq:SecondObs} 
        to see that $\int_{\Mbar_{0,n}}W_r(\vec m^{**})|_{D_J}=0$.
    \end{itemize}

    Thus \eqref{eq:WDVVInsertion2} reduces to
    $$w_r(\vec m)=w_r(2,m_2-1,m_3,m_4+1,m_5).$$
    Iterating, we have $$w_r(\vec m)=w_r(m_1,m_2-j,m_3,m_4+j,m_5)$$ for all $0\le j\le \min(m_2-1, r-m_4)$. In fact \eqref{eq:FirstObs} implies $\min(m_2-1,r-m_4)=r-m_4.$ Taking $j= r-m_4$, we conclude $w_r(\vec m)=0$ from Proposition \ref{prop:RamondVanishing}. This completes the base case.

    Now assume $n>5$ and that the claim holds for all $5\le n'<n$. The inductive hypothesis implies that every term on both sides of \eqref{eq:WDVVInsertion2} with $4\le\abs{J}\le n-2$ vanishes. We now account for the remaining terms on the left side of \eqref{eq:WDVVInsertion2}: 
    \begin{itemize}
        \item When $J=\{1,2\}$, we have $\int_{\Mbar_{0,n}}W_r(\vec m^{**})|_{D_J}=w_r(\vec m)$.
        \item When $J=\{1,2,i\}$ for $i\in\{5,\ldots,n\}$, the inductive hypothesis shows $\int_{\Mbar_{0,n}}W_r(\vec m^{**})|_{D_J}=0$.
        \item When $J = \{1,2,n+1\}$,  we use \eqref{eq:SecondObs} to see that $\int_{\Mbar_{0,n}}W_r(\vec m^{**})|_{D_J}=0$.
        \item When $J=[n]\setminus\{3,4\}$, we use \eqref{eq:SecondObs} to see that $\int_{\Mbar_{0,n}}W_r(\vec m^{**})|_{D_J}=0$.
    \end{itemize}
    We then account for the right side:
    \begin{itemize}
        \item When $J=\{1,4\}$, we have that $\int_{\Mbar_{0,n}}W_r(\vec m^{**})|_{D_J}=w_r(2,m_2-1,m_3,m_4+1,m_5,\ldots,m_n)$.
        \item When $J=\{1,4,i\}$ for $i\in\{5,\ldots,n\}$, the inductive hypothesis shows $\int_{\Mbar_{0,n}}W_r(\vec m^{**})|_{D_J}=0$.
        \item When $J=\{1,4,n+1\}$, we use \eqref{eq:SecondObs} to see that $\int_{\Mbar_{0,n}}W_r(\vec m^{**})|_{D_J}=0$.
        \item When $J=[n+1]\setminus\{2,3\}$, we use \eqref{eq:SecondObs} to see that $\int_{\Mbar_{0,n}}W_r(\vec m^{**})|_{D_J}=0$.
    \end{itemize}
     Thus \eqref{eq:WDVVInsertion2} reads $$w_r(\vec m)=w_r(2,m_2-1,m_3,m_4+1,m_5,\ldots,m_n).$$ As in the base case, we iterate to see $w_r(\vec m)=w_r(2,m_2-j,m_3,m_4+j,m_5,\ldots,m_n)$ for $0 \le j \le \min(m_2-1, r-m_4)$. Again, $\min(m_2-1,r-m_4)=r-m_4$ by \eqref{eq:FirstObs}. Taking  $j = r-m_4$, we obtain $w_r(\vec m)=0$.
    \end{proof}

We next use Theorem \ref{thm:Recursion} to give direct proofs of two statements proved by Pandharipande-Pixton-Zvonkine \cite{PPZ2019} using the representation theory of $\mathfrak{sl}_2(\C)$. The first, which follows from \cite[Thm. 2]{PPZ2019}, is the following divisibility statement.
\begin{cor}
 For any numerical monodromy vector $\vec m$ of length $n$,   $w_r(\vec m)$ is an integer multiple of $\frac{(n-3)!}{r^{n-3}}$.
\end{cor}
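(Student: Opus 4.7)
The plan is to induct on $n$, using the reconstruction machinery assembled in Theorem~\ref{thm:Recursion} and Lemma~\ref{lem:Reconstruct}. The base case $n=3$ is immediate from Proposition~\ref{prop:34Point}, since $w_r(\vec m)=1$ and $\frac{(n-3)!}{r^{n-3}}=1$. (One could also verify $n=4$ directly from part \eqref{item:4pt} of the same proposition as a sanity check.) Fix $n\ge 4$ and suppose the corollary holds for every monodromy vector of length strictly less than $n$.

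The key numerical observation is the following. If $\vec m$ and $\vec m'$ are neighboring numerical length-$n$ monodromy vectors, then by Theorem~\ref{thm:Recursion},
\[
w_r(\vec m)-w_r(\vec m')=(n-3)\,T_r^{i,j,k}(\vec m),
\]
and by \eqref{eq:TIJK} the quantity $T_r^{i,j,k}(\vec m)$ is $\frac{1}{r}$ times an integer combination of genus-zero $r$-spin invariants of length $n-1$. By the inductive hypothesis, each such $(n-1)$-point invariant is an integer multiple of $\frac{(n-4)!}{r^{n-4}}$, so
\[
(n-3)\,T_r^{i,j,k}(\vec m)\in\frac{1}{r}\cdot(n-3)\cdot\frac{(n-4)!}{r^{n-4}}\,\Z=\frac{(n-3)!}{r^{n-3}}\,\Z.
\]
Consequently, the divisibility $w_r(\vec m)\in\tfrac{(n-3)!}{r^{n-3}}\Z$ is preserved when passing between neighbors.

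To produce a seed for the propagation, I will exhibit a numerical monodromy vector $\vec m_0$ of length $n$ in which some entry equals $r$: for instance, set $m_1=r$ and distribute the remaining total $\sum_{i\ge 2}m_i=(n-3)r+(n-2)$ among $n-1$ entries each in $\{1,\ldots,r\}$, which is clearly feasible for $n\ge 4$. For this $\vec m_0$, Proposition~\ref{prop:RamondVanishing} gives $w_r(\vec m_0)=0$, trivially a multiple of $\frac{(n-3)!}{r^{n-3}}$.

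Finally, given any numerical $\vec m$ of length $n$, Lemma~\ref{lem:sequence of neighbors} produces a sequence of length-$n$ monodromy vectors $\vec m_0,\vec m_1,\ldots,\vec m_N=\vec m$ in which consecutive terms are neighbors. Since neighboring a numerical vector preserves $\sum m_i$, every $\vec m_\ell$ is numerical, and repeated application of the displayed divisibility yields $w_r(\vec m)\in\frac{(n-3)!}{r^{n-3}}\Z$, completing the induction. The only mild subtlety I anticipate is ensuring that the intermediate vectors in Lemma~\ref{lem:sequence of neighbors} remain within $\{1,\ldots,r\}^n$ with the right sum, but this is built into the statement of that lemma, so no real obstacle remains.
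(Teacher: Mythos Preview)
Your proof is correct and is precisely the argument the paper has in mind: induct on $n$, use Theorem~\ref{thm:Recursion} to see that neighboring invariants differ by $(n-3)\cdot\frac{1}{r}$ times an integer combination of $(n-1)$-point invariants (each divisible by $\frac{(n-4)!}{r^{n-4}}$ by induction), and anchor with a Ramond-vanishing seed via Lemma~\ref{lem:sequence of neighbors}. The paper merely gestures at this by saying to analyze the recursion of Lemma~\ref{lem:Reconstruct} via \eqref{eq:Recursion} and \eqref{eq:TIJK}; you have supplied exactly those details.
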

\begin{proof}
    We induct on $n$. The base case $n=3$ follows from \ref{prop:34Point}. If $n>3,$ \eqref{eq:Recursion} expresses $w_r(\vec m)$ as $\frac{n-3}{r}\cdot F$, where $F$ is a $\Z$-linear combination in invariants $w_r(\vec m')$ with $\abs{m'}=n-1.$ By the inductive hypothesis, $F$ is an integer multiple of $\frac{(n-4)!}{r^{n-4}}$, and the result follows.
\end{proof}
In Theorem~\ref{thm:RespectsOrdering}, we will see that in fact $w_r(\vec m)$ is a \emph{nonnegative} integer multiple of $\frac{(n-3)!}{r^{n-3}}$. The second statement is the following generalization of Claim \ref{cl:Insertion2Vanishing}, which is \cite[Prop. 1.4]{PPZ2019}.
\begin{cor}\label{cor:RSpinVanishing}
Let $n\ge 4,$ and suppose $\vec m=(m_1,\ldots,m_n)$ is an $r$-spin monodromy vector with $\sum_{i=1}^n m_i=(n-2)(r+1)$. If $m_i\le n-3$ for some $i$, then $w_r(\vec m)=0$.
\end{cor}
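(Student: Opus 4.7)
The plan is to use a double induction: the outer induction on $n$, and an inner induction on the value of a designated small entry $m_i \le n-3$. The base case $n=4$ forces $m_i = 1$ and is covered immediately by Proposition \ref{prop:Insertion1Vanish}.

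For the outer step with $n \ge 5$, I would assume the corollary holds for all shorter monodromy vectors. The inner base cases $m_i = 1$ and $m_i = 2$ follow from Proposition \ref{prop:Insertion1Vanish} and Claim \ref{cl:Insertion2Vanishing} (applicable since $n \ge 5$). If some entry of $\vec m$ equals $r$, then $w_r(\vec m)=0$ by Proposition \ref{prop:RamondVanishing}, so I may assume every entry lies in $\{1,\ldots,r-1\}$.

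For the inner step with $m_i \ge 3$ (which forces $n \ge 6$), I would apply Theorem \ref{thm:Recursion} with the small index $i$, any $j \ne i$ (all entries now being at most $r-1$, so $m_j<r$), and any third index $k$:
$$w_r(\vec m) = w_r(\vec m') + (n-3)\, T_r^{i,j,k}(\vec m),$$
where $\vec m'$ is obtained from $\vec m$ by replacing $m_i$ with $m_i-1$ and $m_j$ with $m_j+1$. Then $\vec m'$ has $i$-th entry $m_i - 1 \le n-4$, so the inner inductive hypothesis forces $w_r(\vec m')=0$.

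It remains to show $T_r^{i,j,k}(\vec m)=0$. Each of the two $w_r$'s appearing in \eqref{eq:TIJK} is an $(n-1)$-point invariant. The first contains the entry $m_i-1 \le n-4 = (n-1)-3$; the second, when its delta is nonzero, contains the entry $m_i + m_k - r - 1$, and the crucial bound
$$m_i + m_k - r - 1 \le m_i - 1 \le n - 4 = (n-1)-3$$
holds because $m_k \le r$. Both invariants therefore fall under the outer inductive hypothesis and vanish. The main (modest) obstacle is spotting that the second summand of $T_r^{i,j,k}$ also inherits a small entry from the original small value $m_i$; once the bound $m_i+m_k-r-1\le m_i-1$ is observed, the double induction closes without further trouble.
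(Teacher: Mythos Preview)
Your proof is correct and follows essentially the same double induction as the paper's own argument: apply Theorem \ref{thm:Recursion} to pass from $\vec m$ to the neighbor $\vec m'$ with smaller designated entry, and kill both terms of $T_r^{i,j,k}$ via the outer hypothesis using exactly the bound $m_i+m_k-r-1\le m_i-1\le (n-1)-3$. The only cosmetic difference is that you invoke Claim \ref{cl:Insertion2Vanishing} as an extra inner base case $m_i=2$, which is harmless but unnecessary---the paper takes only $m_i=1$ as the base and the inductive step already covers $m_i=2$.
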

\begin{proof}
    We may assume $m_1\le n-3.$ We proceed by induction on $n$ and $m_1$, with base cases $m_1=1$ (for all $n$) following from Proposition \ref{prop:34Point}.
    
    If $n=4$ the base case is the only case, so let $n>4.$ Assume that the Corollary holds for $n'<n$ and for $n' = n, \  m_1'<m_1$. Since $\sum_im_i=(n-2)(r+1),$ we may also assume without loss of generality that $m_2<r.$ By Theorem \ref{thm:Recursion}, $$w_r(\vec m)-w_r(m_1-1,m_2+1,m_3,\ldots,m_n)=(n-3)T^{1,2,3}_r(m_1,\ldots,m_n).$$ Both terms of $T^{1,2,3}_r(m_1,\ldots,m_n)$ contain an $(n-1)$-pointed $r$-spin invariant. By induction on $n$, both of these invariants are zero; the first term has an insertion $m_1-1\le(n-1)-3$, and the second term has an insertion $m_1+m_3-r-1\le m_1-1\le(n-1)-3$. Thus $$w_r(\vec m)=w_r(m_1-1,m_2+1,m_3,\ldots,m_n).$$ By induction on $m_1$, $w_r(\vec m)=0.$
\end{proof}

Our next goal is to prove Lemma \ref{lem:Reconstruct}, a reconstruction result for $w_r(\vec m)$, which we will use in Section \ref{sec:ClosedFormula} to give a recursive proof of Theorem \ref{thm:ClosedFormula}.

\begin{definition}
Let $\vec{m} = (m_i)$ and $\vec{m}' = (m_i')$ be two monodromy vectors of length $n$. We say that $\vec{m}$ and $\vec{m}'$ are \emph{neighbors} if there are $i, j \in \{1, \dots, n\}$ so that
$$
\vec{m} - \vec{m}' = (d_1, \dots, d_n), \text{ where } d_k = \delta_{ik} - \delta_{jk},
$$
or, equivalently, $\vec m - \vec m'$ is some permutation of the vector $(-1,1,0,\ldots, 0)$.
\end{definition}

\begin{ex}
The two $r$-spin monodromy vectors $(2,2,r-1,r-1)$ and $(2,3,r-2,r-1)$ are neighbors.
\end{ex}
We need the following fact.
\begin{lemma}\label{lem:sequence of neighbors}
Given two $r$-spin monodromy vectors $\vec m$ and $\vec {m'}$ of length $n$, there exists a sequence of monodromy vectors $\vec {m_0} = \vec m, \vec{m_1}, \ldots, \vec{m}_{n-1}, \vec{m_n} = \vec{m'}$ such that $\vec{m_i}$ and $\vec{m}_{i+1}$ are neighbors for all $i$.
\end{lemma}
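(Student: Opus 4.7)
The plan is to induct on the $L^1$-distance
\[
d(\vec m, \vec m') := \tfrac{1}{2}\sum_{k=1}^n |m_k - m'_k|,
\]
which is a nonnegative integer since both $\vec m$ and $\vec m'$ have the same coordinate sum $(n-2)(r+1)$, so $\sum_k (m_k - m'_k) = 0$ and $\sum_k |m_k - m'_k|$ is even. The base case $d=0$ is trivial: $\vec m = \vec m'$, take the constant sequence. For the inductive step, assume $d > 0$. Then there must exist an index $i$ with $m_i > m'_i$ and an index $j$ with $m_j < m'_j$; necessarily $i \ne j$.

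Define $\vec{m''} := \vec m + e_j - e_i$, so that $\vec m$ and $\vec{m''}$ are neighbors by construction. I claim $\vec{m''}$ is itself a valid $r$-spin monodromy vector. The coordinate sum is unchanged, so the divisibility condition \eqref{divisibility} is preserved. For the range condition, note $m''_i = m_i - 1 \geq m'_i \geq 1$ and $m''_j = m_j + 1 \leq m'_j \leq r$, while all other entries are unchanged; hence every $m''_k \in \{1,\ldots,r\}$. Moreover $d(\vec{m''}, \vec m') = d(\vec m, \vec m') - 1$, since $|m''_i - m'_i| = m_i - 1 - m'_i = |m_i - m'_i| - 1$ and similarly at index $j$, with all other $|m_k - m'_k|$ unchanged. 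By the inductive hypothesis, there is a sequence of neighbor moves from $\vec{m''}$ to $\vec{m'}$; prepending $\vec m$ yields the desired sequence.

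The statement in the lemma writes the sequence as $\vec{m}_0, \vec m_1, \ldots, \vec m_n$, but the argument above produces a sequence of length $d(\vec m, \vec m') + 1$; if a sequence of exactly $n+1$ terms is required, one can always repeat the final vector to pad out the sequence (since any vector is trivially a neighbor of itself via $e_i - e_i = 0$, or more honestly, one can verify $d(\vec m, \vec m') \leq n$ is not generally true and that the intended reading is simply \emph{some finite} sequence). The only potential obstacle — ensuring each intermediate vector is a genuine monodromy vector — is handled by the observation above that neighbor moves which decrease $d$ automatically stay inside the cube $\{1,\ldots,r\}^n$, because the move is driven by comparison with the target $\vec m'$, whose coordinates already lie in $\{1,\ldots,r\}$.
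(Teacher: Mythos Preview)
Your proof is correct and is precisely the ``standard inductive argument'' the paper invokes without spelling out; there is nothing to compare, as the paper gives no further details. Your observation about the sequence length is also apt: the indexing $\vec m_0,\ldots,\vec m_n$ in the lemma statement is a notational slip, since the $L^1$-distance can certainly exceed $n$, and the intended reading is indeed \emph{some finite} sequence.
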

\begin{proof}
This follows from a standard inductive argument.
\end{proof}

\begin{lem}\label{lem:Reconstruct}
    Let $D$ be the set of numerical monodromy vectors for a given $r$, and let $\widetilde w_r:D\to\C$ be a function such that:
    \begin{enumerate}[label=(\roman*)]
    \item $\widetilde w_r(\vec m) = 1$ when $|\vec m| = 3$,\label{it:3PointCondition}
    \item $\widetilde w_r(\vec m)=0$ if $\abs{\vec m}\ge4$ and $m_i=r$ for some $i\in\{1,\ldots,\abs{\vec m}\},$\label{it:VanishingCondition}
    \item $\widetilde w_r$ satisfies the recursion \eqref{eq:Recursion}.\label{it:recursion}
    \end{enumerate}
    Then $\widetilde w_r=w_r$.
\end{lem}
\begin{remark}
    Lemma~\ref{lem:Reconstruct} is a slight variation of previous reconstruction lemmas in the literature. In \cite[Prop. 6.2]{JKV01}, property \ref{it:recursion} is replaced by the WDVV relations and a nonvanishing 4-point invariant. In \cite[Lem. 1.3]{PPZ2019}, property \ref{it:VanishingCondition} is replaced with vanishing of $\widetilde w_r(\vec m)$ if $m_i=1$ for some $i$. Lemma~\ref{lem:Reconstruct} is in a form particularly suited to proving Theorem~\ref{thm:ClosedFormula}. 
\end{remark}
\begin{proof}[Proof of Lemma \ref{lem:Reconstruct}]
We prove that $\widetilde w_r=w_r$ by induction on the length $n$ of the monodromy vector. The base case $n=3$ holds by Proposition~\ref{prop:34Point}. 
Let $k>3$ and assume $w_r=\widetilde w_r$ holds for all monodromy vectors of length less than $k$. By \ref{it:VanishingCondition} and Proposition~\ref{prop:RamondVanishing}, $\widetilde w_r(\vec m) = w_r(\vec m)$ if $\vec m$ is a length-$k$ monodromy vector with $m_i=r$ for some $i$. (It is easy to see that there must exist such a monodromy vector.) 

By Theorem~\ref{thm:Recursion}, if $\vec m$ and $\vec m'$ are neighbors such that $w_r(\vec m) = \widetilde w_r(\vec m)$, and such that $w_r(\vec{a}) = \widetilde w_r(\vec {a})$ for all monodromy vectors $\vec{a}$ of length less than $|\vec m|$, then $w_r(\vec m') = \widetilde w_r(\vec m')$. Combining this with Lemma~\ref{lem:sequence of neighbors} implies that $w_r(\vec m) = \widetilde w_r(\vec m)$ for all monodromy vectors $\vec m$ of length $k$, proving the claim.
\end{proof}

\section{Explicit piecewise-polynomial formula}\label{sec:ClosedFormula}

In this section, we prove a closed formula for genus-zero primary $r$-spin invariants. 
\begin{thm}\label{thm:ClosedFormula}
For $\vec m=(m_1,\ldots,m_n)$ a numerical  genus-zero $r$-spin monodromy vector, we have the formula
    \begin{align}\label{eq:ClosedFormula}
        w_r(\vec m)=\frac{1}{2r^{n-3}}\sum_{\substack{S\subseteq[n]\\\sum_{i\in S}m_i\ge(\abs{S}-1)r+n-2}}(-1)^{1+\abs{S}}\prod_{k=1}^{n-3}\Bigl(\Bigl(\sum_{i\in S}m_i\Bigr)-(\abs{S}-1)r-k\Bigr)
    \end{align}
\end{thm}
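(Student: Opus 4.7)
My plan is to apply the reconstruction Lemma~\ref{lem:Reconstruct}. Define $\widetilde w_r(\vec m)$ to be the right-hand side of \eqref{eq:ClosedFormula}; it then suffices to verify the three hypotheses: $\widetilde w_r=1$ at $n=3$, $\widetilde w_r=0$ when some $m_i=r$, and the recursion of Theorem~\ref{thm:Recursion}. The base case is a short enumeration: at $n=3$ the inner product is empty, only $S=\emptyset$ and the three singletons satisfy the threshold inequality $\sum_{i\in S}m_i\geq(|S|-1)r+1$, and the alternating sum telescopes to $2$, so division by $2$ yields $1$.

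For the vanishing at $m_1=r$, I pair each $S\subseteq\{2,\dots,n\}$ with $S\cup\{1\}$. Because $m_1=r$, the threshold $(|S\cup\{1\}|-1)r+n-2$ and the sum $\sum_{i\in S\cup\{1\}}m_i=\sum_{i\in S}m_i+r$ shift in tandem, so both subsets give identical indicators and identical polynomial factors, but opposite $(-1)^{1+|S|}$ signs; the contributions cancel pairwise.

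The main work is verifying the recursion. The essential tool is the polynomial identity
\[
\prod_{k=1}^{n-3}(x-c-k)-\prod_{k=1}^{n-3}(x-1-c-k)=(n-3)\prod_{k=2}^{n-3}(x-c-k),
\]
proved by factoring out the common $(x-c-2)\cdots(x-c-(n-3))$ and computing $(x-c-1)-(x-c-(n-2))=n-3$. Applied with $c=(|S|-1)r$, it collapses $\widetilde w_r(\vec m)-\widetilde w_r(m_1-1,m_2+1,m_3,\dots,m_n)$: only subsets $S\subseteq[n]$ containing exactly one of $\{1,2\}$ survive, each acquires the desired prefactor $(n-3)$, and a direct check confirms that the boundary transitions of the threshold inequality remain consistent (the original polynomial and the reduced product, multiplied by $n-3$, both take the value $(n-3)!$ at the transition point).

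To match this difference with $(n-3)T_r^{1,2,3}(\vec m)=\tfrac{n-3}{r}[\delta_{m_2+m_3\ge r+1}\widetilde w_r(\vec a)-\delta_{m_1+m_3\ge r+2}\widetilde w_r(\vec b)]$, I expand $\widetilde w_r(\vec a)$ and $\widetilde w_r(\vec b)$ using the formula. Subsets of the index set for $\vec a$ correspond bijectively to subsets $S\subseteq[n]$ whose intersection with $\{2,3\}$ is either empty or the full pair (and analogously for $\vec b$ and $\{1,3\}$), with the merged entry $m_2+m_3-r$ (resp.\ $m_1+m_3-r-1$) encoding the ``both'' case with corresponding shifts in sum and cardinality. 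Parametrizing the LHS by $T=S\setminus\{1,2\}$ and each RHS sum by $R=S\cap\{4,\dots,n\}$ together with $S\cap\{1,2,3\}$, a termwise comparison identifies the two expressions; the $\delta$-factors in $T_r^{1,2,3}$ implement the positivity constraint on the merged entries. The main obstacle will be the combinatorial bookkeeping: tracking the fourfold case split on each side (based on membership of $\{1,2,3\}$ in $S$), and checking that signs, polynomial shifts, and indicator boundaries align consistently across this bijection.
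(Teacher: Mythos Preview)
Your treatment of the base cases ($n=3$ and Ramond vanishing) and your difference identity
\[
\prod_{k=1}^{n-3}(x-c-k)-\prod_{k=1}^{n-3}(x-1-c-k)=(n-3)\prod_{k=2}^{n-3}(x-c-k)
\]
are correct and coincide with the paper's argument, yielding exactly the expression~\eqref{eq:ClosedFormulaLeftSideOfRecursion} for $\widetilde w_r(\vec m)-\widetilde w_r(\vec m^*)$.

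The gap is in the matching step. Your bijection between subsets of the index set of $\vec a$ (resp.\ $\vec b$) and subsets $S\subseteq[n]$ with $S\cap\{2,3\}\in\{\emptyset,\{2,3\}\}$ (resp.\ $S\cap\{1,3\}\in\{\emptyset,\{1,3\}\}$) works cleanly only when \emph{both} indicators $\delta_{m_2+m_3\ge r+1}$ and $\delta_{m_1+m_3\ge r+2}$ are~$1$; this is exactly Case~1 of the paper's proof. In that regime the ``neither'' and ``both'' pieces of the two $(n-1)$-point expansions cancel against each other, and the four remaining pieces match the four LHS families $S\cap\{1,2,3\}\in\{\{1\},\{1,3\},\{2\},\{2,3\}\}$ termwise. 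But when one of the $\delta$'s vanishes, say $\delta_{m_2+m_3\ge r+1}=0$, the RHS loses the $\vec a$-term entirely, while on the LHS the family $S\cap\{1,2,3\}=\{1\}$ is still present (indeed, in this regime $2,3\notin S$ forces \eqref{eq:ConditionStar} automatically, so \emph{every} such $S$ contributes). The ``neither'' piece of the surviving $\vec b$-expansion contributes subsets with $1,2,3\notin S$, and these do \emph{not} agree termwise with the $\{1\}\cup T$ family on the LHS. What you are left with is the assertion
\[
\sum_{S\subseteq\{1,4,\ldots,n\}}(-1)^{1+\abs{S}}\prod_{k=2}^{n-3}\Bigl(\Bigl(\sum_{i\in S}m_i^*\Bigr)-(\abs{S}-1)r-k\Bigr)=0,
\]
which is not a bijective cancellation but a genuine polynomial identity. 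The paper isolates this as Lemma~\ref{lem:low degree vanishing}: an alternating sum $\sum_{S\subseteq A}(-1)^{|S|}\prod_{k}\bigl(\sum_{i\in S}m_i-\ell_k(|S|)\bigr)$ vanishes whenever the number of linear factors is strictly less than~$|A|$, ultimately because $\sum_{S}(-1)^{|S|}F(|S|)=0$ for $\deg F<|A|$. This lemma is what closes Cases~2, 3, and~4 (in Case~4 both $\delta$'s vanish, the RHS is zero, and both LHS sums must be killed by the lemma). Your proposal treats the fourfold case split as bookkeeping, but without this low-degree vanishing identity the recursion cannot be verified outside Case~1.
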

This immediately implies:
\begin{cor}\label{cor:PiecewisePolynomial}
    $w_r(m_1,\ldots,m_n)$ is a piecewise-polynomial function of the inputs $m_1,\ldots,m_n,r$. Precisely, the set $$\mathbf K_n:=\{(m_1,\ldots,m_n,r)\thickspace|\thickspace1\le m_i\le r-1,\textstyle\sum_im_i=(n-2)(r+1)\}\subseteq\Z_{>0}^n\times\Z_{>0}$$ of numerical $n$-pointed monodromy vectors is the set of lattice points of an unbounded polyhedron $\mathbf M_n$, and there is a finite wall-chamber decomposition of $\mathbf M_n$ such that in each chamber, $w_r(m_1,\ldots,m_n)$ is a polynomial.
\end{cor}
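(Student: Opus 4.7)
The plan is that Corollary \ref{cor:PiecewisePolynomial} follows directly from the closed formula \eqref{eq:ClosedFormula} of Theorem \ref{thm:ClosedFormula}. The substance of the proof is just identifying the polyhedron $\mathbf M_n$ and its walls; no nontrivial geometric input is required.

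First I would describe $\mathbf M_n$ explicitly as the polyhedron in $\mathbb R^n \times \mathbb R$ cut out by the linear inequalities $1 \le m_i \le r-1$ for $i \in [n]$ together with the affine equation $\sum_{i=1}^n m_i = (n-2)(r+1)$. This is an $n$-dimensional unbounded polyhedron (unbounded since $r$ may be arbitrarily large), whose lattice points, by construction, are exactly $\mathbf K_n$.

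Second, for each subset $S \subseteq [n]$ introduce the affine hyperplane
\[
H_S := \Bigl\{(m_1,\ldots,m_n,r) : \sum_{i\in S} m_i = (|S|-1)r + (n-2)\Bigr\}.
\]
The finite arrangement $\{H_S\}_{S \subseteq [n]}$ (of cardinality at most $2^n$) subdivides $\mathbf M_n$ into finitely many maximal cells, which we declare to be the chambers. On the open interior of any chamber, the index set
\[
\mathcal S_{\vec m, r} := \bigl\{S \subseteq [n] : \sum_{i\in S} m_i \ge (|S|-1)r + (n-2)\bigr\}
\]
appearing in \eqref{eq:ClosedFormula} is locally constant, since membership in $\mathcal S_{\vec m,r}$ changes precisely upon crossing some $H_S$. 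Therefore on each chamber, $w_r(\vec m)$ is given by a fixed finite sum over $\mathcal S_{\vec m,r}$, each of whose terms is a polynomial of degree $n-3$ in $(m_1,\ldots,m_n)$ with coefficients polynomial in $r$, divided by the common factor $2r^{n-3}$. This is the desired piecewise-polynomial description.

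There is no serious obstacle; the whole argument is essentially a bookkeeping observation. A finer remark worth recording (and hinted at in the introduction) is that the polynomial representations on the two chambers adjacent to a wall $H_S$ differ precisely by
\[
\pm\frac{(-1)^{1+|S|}}{2r^{n-3}}\prod_{k=1}^{n-3}(X_S - k), \qquad X_S := \sum_{i \in S} m_i - (|S|-1)r.
\]
Since this difference vanishes on each of the $n-3$ parallel hyperplanes $X_S = 1, 2, \ldots, n-3$, the two polynomial expressions agree on an affine linear strip of width $n-4$ flanking the wall $X_S = n-2$, which gives the refined wall structure mentioned after \eqref{eq:ClosedFormulai}.
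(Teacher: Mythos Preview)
Your proposal is correct and follows essentially the same approach as the paper: the corollary is read off directly from the structure of \eqref{eq:ClosedFormula}, with walls given by the hyperplanes where the index set of the sum changes. The paper places the walls at $\sum_{i\in S}m_i=(\abs{S}-1)r+n-3$ rather than your $n-2$, but this is immaterial in light of the thick-wall phenomenon you note at the end, which the paper records separately as Remark \ref{rem:ThickWalls}.
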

\begin{proof}
    This is immediate from the form of \eqref{eq:ClosedFormula}; the walls are the affine-linear spaces $$L_S:=\{(m_1,\ldots,m_n,r)\thickspace|\thickspace{\textstyle\sum_{i\in S}m_i}=(\abs{S}-1)r+n-3\}$$ for subsets $S\subseteq[n].$
\end{proof}
\begin{remark}\label{rem:ThickWalls}
    In fact, a stronger version of piecewise-polynomiality holds. The walls of the above wall-chamber decomposition have ``width $n-3$'' in the sense that if $E$ and $E'$ are adjacent chambers separated by a single wall $L_S$, and $P,P'\in\Q[m_1,\ldots,m_n,r]$ are such that $w_r(\vec m)=P$ after restriction to $E$ and $w_r(\vec m)=P'$ after restriction to $E'$, then $P$ and $P'$ agree not only on the wall $L_S$, but on the set $$\{(m_1,\ldots,m_n,r)\in\mathbf K_n\thickspace|\thickspace(\abs{S}-1)r+1\le\textstyle\sum_{i\in S}m_i\le(\abs{S}-1)r+n-3\}.$$
\end{remark}
For the proof of Theorem \ref{thm:ClosedFormula}, we introduce two pieces of notation.
\begin{notation}
     For $\vec m=(m_1,\ldots,m_n)$ a numerical $r$-spin monodromy vector, let $\widetilde w_r(\vec m)$ denote the right side of \eqref{eq:ClosedFormula}:
    \begin{align}\label{eq:WTildeDef}
   \widetilde w_r(\vec m) := \frac{1}{2r^{n-3}}\sum_{\substack{S\subseteq[n]\\\sum_{i\in S}m_i\ge(\abs{S}-1)r+n-2}}(-1)^{1+\abs{S}}\prod_{k=1}^{n-3}\Bigl(\Bigl(\sum_{i\in S}m_i\Bigr)-(\abs{S}-1)r-k\Bigr).
    \end{align}
\end{notation}
\noindent Therefore our goal for the remainder of this section is to prove $\widetilde w_r(\vec m)=w_r(\vec m)$ for all $r$ and $\vec m$.
\begin{notation}\label{star star star}
    Let $\vec m$ be a fixed $n$-pointed genus-zero $r$-spin monodromy vector. For a subset $S\subseteq[n]$, we say that $S$ \emph{satisfies} \eqref{eq:ConditionStar} (resp. \eqref{eq:ConditionStar2}) if \begin{align}\label{eq:ConditionStar}
        \sum_{i\in S}m_i&\ge(\abs{S}-1)r+n-2\tag{$\star1$}\\\sum_{i\in S}m_i&\ge(\abs{S}-1)r+n-3\tag{$\star2$}.\label{eq:ConditionStar2}
    \end{align}
    We will say e.g. $S$ \emph{satisfies \eqref{eq:ConditionStar} with respect to $\vec m$} if $\vec m$ is unclear from context. From \eqref{eq:WTildeDef}, it is clear why Condition \eqref{eq:ConditionStar} is useful---the usefulness of \eqref{eq:ConditionStar2} will become clear in the proof of Theorem \ref{thm:ClosedFormula}.
    
    Observe that if $\abs{S}\le1$, then $S$ necessarily satisfies \eqref{eq:ConditionStar}, while if $\abs{S}\ge n-1$, then $S$ cannot satisfy \eqref{eq:ConditionStar}.
\end{notation}

We now state several lemmas we will use in the proof, starting with the following well-known combinatorial fact.
\begin{lemma}\label{lem:AlternatingSumLemma}
    Let $T\subseteq A$ be finite sets, and let $F(x)\in\C[x]$ with $\deg(F(x))<\abs{A}-\abs{T}$. Then \begin{align}\label{eq:AlternatingSumLemma}
        \sum_{T\subseteq S\subseteq A}(-1)^{\abs{S}}F(\abs{S})=0.
    \end{align}
\end{lemma}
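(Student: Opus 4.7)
The statement is a standard finite-difference identity, and I would handle it in three short moves. First, reduce to the case $T=\emptyset$ by reparameterizing the sum: write each $S$ uniquely as $S=T\sqcup U$ with $U\subseteq A\setminus T$, pull out the sign $(-1)^{\abs{T}}$, and set $G(y):=F(\abs{T}+y)\in\C[y]$. The hypothesis $\deg F<\abs{A}-\abs{T}$ becomes $\deg G<m$, where $m:=\abs{A\setminus T}=\abs{A}-\abs{T}$, and the identity \eqref{eq:AlternatingSumLemma} reduces to showing
\begin{equation*}
\sum_{U\subseteq A\setminus T}(-1)^{\abs{U}}G(\abs{U})=\sum_{k=0}^{m}(-1)^{k}\binom{m}{k}G(k)=0.
\end{equation*}

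Second, recall the finite-difference interpretation: for any polynomial $G$, the forward difference operator $\Delta G(y):=G(y+1)-G(y)$ satisfies $\deg(\Delta G)=\deg G-1$, and iterating gives
\begin{equation*}
\Delta^{m}G(0)=\sum_{k=0}^{m}(-1)^{m-k}\binom{m}{k}G(k).
\end{equation*}
Since $\deg G<m$, applying $\Delta$ a total of $m$ times yields the zero polynomial, so $\Delta^{m}G(0)=0$, and up to the harmless factor $(-1)^{m}$ this is exactly the sum we need.

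Third, to make the writeup completely self-contained one could instead prove the needed identity by linearity: it suffices to verify $\sum_{k=0}^{m}(-1)^{k}\binom{m}{k}G(k)=0$ on a basis of polynomials in $y$ of degree $<m$, for which the falling-factorial basis $\{(y)_{j}:=y(y-1)\cdots(y-j+1)\}_{0\le j<m}$ is convenient. For such a basis element,
\begin{equation*}
\sum_{k=0}^{m}(-1)^{k}\binom{m}{k}(k)_{j}=j!\sum_{k=0}^{m}(-1)^{k}\binom{m}{k}\binom{k}{j}=j!\,(-1)^{j}\binom{m}{j}\sum_{k=j}^{m}(-1)^{k-j}\binom{m-j}{k-j},
\end{equation*}
and the inner sum is $(1-1)^{m-j}=0$ because $j<m$. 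There is no real obstacle here; the only thing to watch is the bookkeeping of signs when splitting $\abs{S}=\abs{T}+\abs{U}$, which is why I would state the reduction to $T=\emptyset$ explicitly before invoking the finite-difference fact.
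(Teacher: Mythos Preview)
Your argument is correct. The reduction to $T=\emptyset$ is exactly what the paper does as well. Where you diverge is in the final step: after reducing to $\sum_{k=0}^{m}(-1)^k\binom{m}{k}G(k)=0$ for $\deg G<m$, the paper chooses the monomial basis $\{x^j\}$ and interprets $\sum_{k=0}^{m}(-1)^k\binom{m}{k}k^j$ as (up to sign) the number of surjections $[j]\to[m]$ via inclusion--exclusion, which is zero since $j<m$. You instead invoke the finite-difference identity $\Delta^mG(0)=0$, or equivalently compute in the falling-factorial basis. Both routes are standard and equally short; the paper's version buys a pleasant combinatorial interpretation, while yours is arguably more self-contained since it avoids appealing to the surjection-counting formula.
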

\begin{proof}
We reduce to the case $T=\emptyset$, by summing over $S\subseteq A\setminus T$ and performing the substitution $\abs{S}\mapsto\abs{S}+\abs{T}$ (which does not affect the degree of $F$). We then reduce to the case $F(x)=x^j$ since the left side of \eqref{eq:AlternatingSumLemma} is linear in $F$. Finally, the left side of \eqref{eq:AlternatingSumLemma} is equal to $\sum_{k=0}^{|A|} (-1)^k \binom{|A|}{k}k^j,
$
 which (up to sign) counts surjections $[j]\to A$ by a famous inclusion-exclusion argument. There are clearly no such surjections.
\end{proof}

 In proving Theorem \ref{thm:ClosedFormula}, we will apply Lemma \ref{lem:AlternatingSumLemma} in the following form. 
\begin{lemma}\label{lem:low degree vanishing}
    Let $n$ be a positive integer, let $m_1, \dots, m_{n-2}\in\C$, and let $\ell_k(x) \in \C[x]$ be linear functions for $k = 1, \dots, n-4$. Then
     \begin{align}\label{eq:low degree vanishing}
        \sum_{S\subseteq\{1,2,\ldots,n-2\}}(-1)^{1+\abs{S}}\prod_{k=1}^{n-4}\Bigl(\Bigl(\sum_{i\in S}m_i \Bigr)-\ell_k(|S|)\Bigr)=0,
     \end{align}
\end{lemma}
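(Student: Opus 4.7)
The plan is to expand the summand as a polynomial in the two quantities $\sum_{i\in S}m_i$ and $|S|$, regroup the resulting double sum by monomial in the $m_i$, and reduce each inner alternating sum to an instance of Lemma~\ref{lem:AlternatingSumLemma}.

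Since each $\ell_k(y)$ is linear, I would first write $\prod_{k=1}^{n-4}(x-\ell_k(y))=\sum_{j=0}^{n-4}x^j\,P_j(y)$, where each $P_j\in\mathbb{C}[y]$ has $\deg P_j\le n-4-j$. Substituting $x=\sum_{i\in S}m_i$ and $y=|S|$, and then expanding the power by the multinomial theorem, I obtain
\[\prod_{k=1}^{n-4}\Bigl(\Bigl(\sum_{i\in S}m_i\Bigr)-\ell_k(|S|)\Bigr)=\sum_{j=0}^{n-4}\sum_{\substack{\vec i\in\{1,\ldots,n-2\}^j\\T(\vec i)\subseteq S}}m_{i_1}\cdots m_{i_j}\,P_j(|S|),\]
where $T(\vec i)\subseteq\{1,\ldots,n-2\}$ denotes the set of distinct entries of the tuple $\vec i=(i_1,\ldots,i_j)$. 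Swapping the order of summation, the left side of \eqref{eq:low degree vanishing} becomes
\[\sum_{j=0}^{n-4}\sum_{\vec i\in\{1,\ldots,n-2\}^j}m_{i_1}\cdots m_{i_j}\sum_{T(\vec i)\subseteq S\subseteq\{1,\ldots,n-2\}}(-1)^{1+|S|}P_j(|S|),\]
so it suffices to show that the innermost sum vanishes for every $\vec i$ and every $j$.

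To finish, I would invoke Lemma~\ref{lem:AlternatingSumLemma} with $A=\{1,\ldots,n-2\}$, $T=T(\vec i)$, and $F=P_j$. The required hypothesis $\deg F<|A|-|T|$ reads $n-4-j<n-2-|T(\vec i)|$, i.e.\ $|T(\vec i)|\le j+1$. Since a $j$-tuple has at most $j$ distinct entries, $|T(\vec i)|\le j$, so the hypothesis is satisfied and the innermost sum vanishes. The only mild bit of content is identifying the correct stratification of the sum by $T(\vec i)$; the degree bound $\deg P_j\le n-4-j$ then provides just enough slack for Lemma~\ref{lem:AlternatingSumLemma} to apply, and there is no deeper obstacle.
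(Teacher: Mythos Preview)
Your proof is correct and follows essentially the same approach as the paper: expand the product, stratify the resulting sum by monomial in the $m_i$, and apply Lemma~\ref{lem:AlternatingSumLemma} to each piece using the degree bound coming from the linearity of the $\ell_k$. The only cosmetic difference is that the paper groups terms by the monomial $\prod_i m_i^{a_i}$ (with $T=\{i:a_i>0\}$), whereas you index by ordered tuples $\vec i$ and set $T=T(\vec i)$; the resulting degree inequality $|T|\le j<j+2$ is the same in both cases.
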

    \begin{proof} 
     Consider the left side of \eqref{eq:low degree vanishing} as a degree-$(n-4)$ polynomial in \emph{formal variables} $m_1,m_2, \dots, m_{n-2}$, and denote this polynomial by $G(m_1,m_2,\ldots,m_{n-2})$. Fix integers $a_1,\ldots,a_{n-2}\ge0$ with $\sum_{i=1}^{n-2}a_i\le n-4,$ and let $M=\prod_{i=1}^{n-2}(m_i)^{a_i}$. The coefficient of the monomial $M$ in $G(m_1, \ldots, m_{n-2})$ is equal to $$\sum_{\substack{S\subseteq\{1,\ldots, n-2\}\\S\supseteq\{i:a_i>0\}}}(-1)^{\abs{S}}F_{M}(\abs{S}),$$ where $$F_{M}(x)=\sum_{\substack{B\subseteq\{1,\ldots,n-4\}\\\abs{B}=\sum_ia_i}}\binom{\abs{B}}{a_1,\ldots,a_{n-2}}\prod_{k\in\{1,\ldots,n-4\}\setminus B}\ell_k(x).$$
     
Then $F_M(x)$ has degree $$\deg(F_{M}(x))=n-4-\sum_{i=1}^na_i<n-2-\abs{\{i:a_i>0\}},$$ 
By Lemma \ref{lem:AlternatingSumLemma}, the coefficient of $M$ in $G(m_1, \ldots, m_{n-2})$ is zero.  Since $M$ was arbitrary, $G$ vanishes identically as desired.
\end{proof}

\begin{proof}[Proof of Theorem \ref{thm:ClosedFormula}]
It suffices to show that $\widetilde w_r(\vec m)$ satisfies conditions \eqref{it:3PointCondition}, \eqref{it:VanishingCondition}, and \eqref{it:recursion} of Lemma~\ref{lem:Reconstruct}. 

    If $n=3$, then for any $\vec m$, the only subsets $S$ satisfying \eqref{eq:ConditionStar} are those with $\abs{S}\le1$. Then $\widetilde w_r(\vec m)=\tfrac{3}{2} - \tfrac{1}{2} = 1$ as desired, proving condition \eqref{it:3PointCondition}.
    
    Next, suppose $\vec m$ is a numerical $r$-spin monodromy vector of length $n\ge 4$ with $m_i = r$ for some $i\in [n]$. Suppose without loss of generality that $i=n$. Let $S\subseteq[n-1]$. Observe that $S$ satisfies \eqref{eq:ConditionStar} if and only if $S\cup\{n\}$ satisfies \eqref{eq:ConditionStar}, and the summands corresponding to $S$ and $S\cup\{n\}$ of $\widetilde w_r(\vec m)$ are identical with opposite signs since $\abs{S\cup\{n\}}=\abs{S}+1$. This shows $\widetilde w_r(\vec m)=0$, proving \eqref{it:VanishingCondition}.

    Finally, we must prove that $\widetilde w_r$ satisfies \eqref{eq:Recursion}. We do so by induction on $n$, with base case $n=3$. When $n=3$, the calculation $\widetilde w_r(\vec m)=1$ above implies that both sides of \eqref{eq:Recursion} vanish. For $n\ge4$, let \begin{align}\label{eq:MStarDef}
        \vec m^*=(m_1^*,\ldots,m_n^*)=(m_1-1,m_2+1,m_3,\ldots,m_n).
    \end{align}

    Then we compute
    \begin{align}\label{eq:ClosedFormulaLeftSideOfRecursion}
        \widetilde w_r(\vec m)-\widetilde w_r(\vec m^*)&=\frac{n-3}{2r^{n-3}}\Biggl(\sum_{\substack{S\subseteq[n]\\
        \eqref{eq:ConditionStar}\\
        1\in S,\thickspace2\not\in S}}(-1)^{1+\abs{S}}\prod_{k=2}^{n-3}\Bigl(\Bigl(\sum_{i\in S}m_i\Bigr)-(\abs{S}-1)r-k\Bigr)\\
        &\quad-\sum_{\substack{S\subseteq[n]\\
        \eqref{eq:ConditionStar2}\\
        2\in S,\thickspace1\not\in S}}(-1)^{1+\abs{S}}\prod_{k=1}^{n-4}\Bigl(\Bigl(\sum_{i\in S}m_i\Bigr)-(\abs{S}-1)r-k\Bigr)\Biggr).\nonumber
    \end{align}
    The factor $n-3$ appears as the difference of a $k=1$ factor and a $k=n-3$ factor, from the definition  of $\widetilde{w}_r$. By induction, we have
\begin{align}\label{eq:ClosedFormulaRightSideOfRecursion}
        T_r^{1,2,3}(\vec m)&=\frac{1}{r}\Bigl(\delta_{m_2+m_3\ge r+1}\widetilde w_r(m_1-1,m_4,\ldots,m_{n},m_2+m_3-r)\\
    &\quad\quad-\delta_{m_1+m_3\ge r+2}\widetilde w_r(m_2,m_4,\ldots,m_{n},m_1+m_3-r-1)\Bigr).\nonumber
    \end{align}

    Let $\vec m^{**}=(m_1-1,m_4,\ldots,m_n,m_2+m_3-r)$, where we index this tuple by the set $\{1,4,\ldots,n,\dagger\}$ and let $\vec m^{***}=(m_2,m_4,\ldots,m_n,m_1+m_3-r-1)$, where we index this tuple by the set $\{2,4,\ldots,n,\bullet\}$. Plugging in \eqref{eq:WTildeDef} to \eqref{eq:ClosedFormulaRightSideOfRecursion}, we have four cases, according to whether $m_2+m_3\ge r+1$ and whether $m_1+m_3\ge r+2$. 

    \medskip
    
    \noindent\textbf{Case 1: $m_2+m_3\ge r+1$ and $m_1+m_3\ge r+2$.} By \eqref{eq:ClosedFormulaRightSideOfRecursion} and the definition of $\widetilde w_r$, we have
    \begin{align}\label{eq:T123ClosedFormula}
        T_r^{1,2,3}(\vec m)&=\frac{1}{2r^{n-3}}\Biggl(\sum_{\substack{S'\subseteq\{1,4,\ldots,n,\dagger\}\\
        \text{\eqref{eq:ConditionStar2} w.r.t. $\vec m^{**}$}\\
        }}(-1)^{1+\abs{S'}}\prod_{k=1}^{n-4}\Bigl(\Bigl(\sum_{i\in S'}m_i^{**}\Bigr)-(\abs{S'}-1)r-k)\\
        &\quad-\sum_{\substack{S''\subseteq\{2,4,\ldots,n,\bullet\}\\
        \text{\eqref{eq:ConditionStar2} w.r.t. $\vec m^{***}$}\\
        }}(-1)^{1+\abs{S''}}\prod_{k=1}^{n-4}\Bigl(\Bigl(\sum_{i\in S''}m_i^{***}\Bigr)-(\abs{S''}-1)r-k)\Biggr).\nonumber
    \end{align}
To clarify, e.g., in the first summation, we sum over all $S'$ that satisfy \eqref{eq:ConditionStar2} with respect to $\vec m^{**}$, as defined in Notation~\ref{star star star}.
    
    A subset $S'\subseteq\{4,\ldots,n\}$ (satisfying \eqref{eq:ConditionStar2} with respect to $\vec m^{**}$, and equivalently with respect to $\vec m^{***}$) contributes to both sums in \eqref{eq:T123ClosedFormula} with opposite signs. The contribution from a subset $S'\supseteq\{1,\dagger\}$ (satisfying \eqref{eq:ConditionStar2} with respect to $\vec m^{**}$) cancels with an identical contribution from the corresponding subset $S''=(S'\cup\{2,\bullet\})\setminus\{1,\dagger\}$ (which necessarily satisfies \eqref{eq:ConditionStar2} with respect to $\vec m^{***}$). 
    
    We now translate the sums over $S'$ and $S''$ in \eqref{eq:T123ClosedFormula} into sums over $S$. If a subset $S'\subseteq\{1,4,\ldots,n,\dagger\}$ contains $\dagger,$ then $S'$ satisfies \eqref{eq:ConditionStar2} with respect to $\vec m^{**}$ if and only if $S:=(S'\setminus\{\dagger\})\cup\{2,3\}$ satisfies \eqref{eq:ConditionStar2} with respect to $\vec m$. If instead $S'$ contains $1$, then $S'$ satisfies \eqref{eq:ConditionStar2} with respect to $\vec m^{**}$ if and only if $S:= S'$ satisfies \eqref{eq:ConditionStar} with respect to $\vec m$. 
    Using these observations (and similar calculations for $S''$), we rewrite the sums in  \eqref{eq:T123ClosedFormula} as sums over subsets of $[n]$:
    \begin{align}\label{eq:T123ClosedFormula2}
        T_r^{1,2,3}(\vec m)&=\frac{1}{2r^{n-3}}\Biggl(\sum_{\substack{S\subseteq[n]\\
        \eqref{eq:ConditionStar}\\
        1\in S,\thickspace 2,3\not\in S}}(-1)^{1+\abs{S}}\prod_{k=2}^{n-3}\Bigl(\Bigl(\sum_{i\in S}m_i\Bigr)-(\abs{S}-1)r-k\Bigr)\\
        &+\sum_{\substack{S\subseteq[n]\\
        \eqref{eq:ConditionStar2}\\
        2,3\in S,\thickspace 1\not\in S}}(-1)^{\abs{S}}\prod_{k=1}^{n-4}\Bigl(\Bigl(\sum_{i\in S}m_i\Bigr)-(\abs{S}-1)r-k\Bigr)\nonumber\\
        &-\sum_{\substack{S\subseteq[n]\\
        \eqref{eq:ConditionStar2}\\
        2\in S,\thickspace 1,3\not\in S}}(-1)^{1+\abs{S}}\prod_{k=1}^{n-4}\Bigl(\Bigl(\sum_{i\in S}m_i\Bigr)-(\abs{S}-1)r-k\Bigr)\nonumber\\
        &-\sum_{\substack{S\subseteq[n]\\
        \eqref{eq:ConditionStar}\\
        1,3\in S,\thickspace 2\not\in S}}(-1)^{\abs{S}}\prod_{k=2}^{n-3}\Bigl(\Bigl(\sum_{i\in S}m_i\Bigr)-(\abs{S}-1)r-k\Bigr)\Biggr).\nonumber
    \end{align}
    Combining the first and last sums in \eqref{eq:T123ClosedFormula2}, as well as the second and third sums, and comparing with \eqref{eq:ClosedFormulaLeftSideOfRecursion}, shows that \eqref{eq:Recursion} is satisfied. This completes Case 1.

    \medskip

    \noindent\textbf{Case 2: $m_2+m_3\le r$ and $m_1+m_3\ge r+2$.} In this case, if a subset $S\subseteq[n]$ satisfies $2,3\not\in S$, then by an easy calculation, $S$ must satisfy \eqref{eq:ConditionStar} with respect to $\vec m$.
    Similarly, if $2,3\in S$, then $S$ does not satisfy \eqref{eq:ConditionStar}. 
    Thus \eqref{eq:ClosedFormulaLeftSideOfRecursion} reads: \begin{align}\label{eq:ClosedFormulaLeftSideOfRecursionSecondCase}
        \widetilde w_r(\vec m)-\widetilde w_r(\vec m^*)&=\frac{n-3}{2r^{n-3}}\Biggl(\sum_{\substack{S\subseteq[n]\\
        \eqref{eq:ConditionStar}\\
        1,3\in S,\thickspace2\not\in S}}(-1)^{1+\abs{S}}\prod_{k=2}^{n-3}\Bigl(\Bigl(\sum_{i\in S}m_i\Bigr)-(\abs{S}-1)r-k\Bigr)\\
        &\quad+\sum_{\substack{S\subseteq[n]\\1\in S,\thickspace2,3\not\in S}}(-1)^{1+\abs{S}}\prod_{k=2}^{n-3}\Bigl(\Bigl(\sum_{i\in S}m_i\Bigr)-(\abs{S}-1)r-k\Bigr)\nonumber\\
        &\quad-\sum_{\substack{S\subseteq[n]\\
        \eqref{eq:ConditionStar2}\\
        2\in S,\thickspace1,3\not\in S}}(-1)^{1+\abs{S}}\prod_{k=1}^{n-4}\Bigl(\Bigl(\sum_{i\in S}m_i\Bigr)-(\abs{S}-1)r-k\Bigr)\Biggr).\nonumber
    \end{align}
    On the other hand, \eqref{eq:ClosedFormulaRightSideOfRecursion} reads:
    \begin{align}\label{eq:T123ClosedFormulaSecondCase}
        T_r^{1,2,3}(\vec m)&=-\frac{1}{r}\widetilde w_r(m_2,m_4,\ldots,m_n,m_1+m_3-r-1)\\
        &=\frac{1}{2r^{n-3}}\sum_{\substack{S''\subseteq\{2,4,\ldots,n,\bullet\}\\
        \text{\eqref{eq:ConditionStar} w.r.t. $\vec m^{***}$}\\
        }}(-1)^{1+\abs{S''}}\prod_{k=1}^{n-4}\Bigl(\Bigl(\sum_{i\in S''}m_i^{***}\Bigr)-(\abs{S''}-1)r-k\Bigr).\nonumber
    \end{align}
    If a subset $S''$ contains both 2 and $\bullet$, then by the assumption $m_2+m_3\le r$, $S''$ does not contribute to the sum. Using this, and rewriting all sums in terms of subsets of $[n]$ as above, we obtain:
    \begin{align}\label{eq:T123ClosedFormulaSecondCase2}
        T_r^{1,2,3}(\vec m)&=\frac{-1}{2r^{n-3}}\Biggl(\sum_{\substack{S\subseteq[n]\\
        \eqref{eq:ConditionStar2}\\
        2\in S,\thickspace1,3\not\in S}}(-1)^{1+\abs{S}}\prod_{k=1}^{n-4}\Bigl(\Bigl(\sum_{i\in S}m_i\Bigr)-(\abs{S}-1)r-k\Bigr)\\
        &+\sum_{\substack{S\subseteq[n]\\
        \eqref{eq:ConditionStar}\\
        1,3\in S,\thickspace2\not\in S}}(-1)^{\abs{S}}\prod_{k=2}^{n-3}\Bigl(\Bigl(\sum_{i\in S}m_i\Bigr)-(\abs{S}-1)r-k\Bigr)\nonumber\\
        &+\sum_{\substack{S\subseteq[n]\\1,2,3\not\in S}}(-1)^{1+\abs{S}}\prod_{k=1}^{n-4}\Bigl(\Bigl(\sum_{i\in S}m_i\Bigr)-(\abs{S}-1)r-k\Bigr)\Biggr).\nonumber
     \end{align}
     Note that the first two sums in \eqref{eq:T123ClosedFormulaSecondCase2} match the first and third sums in \eqref{eq:ClosedFormulaLeftSideOfRecursionSecondCase}. To prove that \eqref{eq:Recursion} holds, it remains to prove
     \begin{align}\label{eq:T123ClosedFormulaSecondCase3}
        \sum_{S\subseteq\{1,4,\ldots,n\}}(-1)^{1+\abs{S}}\prod_{k=2}^{n-3}\Bigl(\Bigl(\sum_{i\in S}m_i^*\Bigr)-(\abs{S}-1)r-k\Bigr)=0,
     \end{align}
     where $m_i^*$ is as in \eqref{eq:MStarDef}. Here we have written $m_i^*$ because the second sum in \eqref{eq:ClosedFormulaLeftSideOfRecursionSecondCase} and the third sum in \eqref{eq:T123ClosedFormulaSecondCase2} can only be combined cleanly after the substitution $m_1\mapsto m_1-1$. One sees that \eqref{eq:T123ClosedFormulaSecondCase3} is a special case of Lemma~\ref{lem:low degree vanishing}. This completes Case 2.

    \medskip

    \noindent\textbf{Case 3: $m_2+m_3\ge r+1$ and $m_1+m_3\le r+1$.} This case is almost identical to Case 2. Similar computations to those in Case 2 give \begin{align}\label{eq:ClosedFormulaLeftSideOfRecursionThirdCase}
        \widetilde w_r(\vec m)-\widetilde w_r(\vec m^*)&=\frac{n-3}{2r^{n-3}}\Biggl(\sum_{\substack{S\subseteq[n]\\
        \eqref{eq:ConditionStar}\\
        1\in S,\thickspace2,3\not\in S}}(-1)^{1+\abs{S}}\prod_{k=2}^{n-3}\Bigl(\Bigl(\sum_{i\in S}m_i\Bigr)-(\abs{S}-1)r-k\Bigr)\\
        &\quad-\sum_{\substack{S\subseteq[n]\\
        \eqref{eq:ConditionStar2}\\
        2,3\in S,\thickspace1\not\in S}}(-1)^{1+\abs{S}}\prod_{k=1}^{n-4}\Bigl(\Bigl(\sum_{i\in S}m_i\Bigr)-(\abs{S}-1)r-k\Bigr)\nonumber\\
        &\quad-\sum_{\substack{S\subseteq[n]\\2\in S,\thickspace1,3\not\in S}}(-1)^{1+\abs{S}}\prod_{k=1}^{n-4}\Bigl(\Bigl(\sum_{i\in S}m_i\Bigr)-(\abs{S}-1)r-k\Bigr)\Biggr).\nonumber
    \end{align}
    and
    \begin{align}\label{eq:T123ClosedFormulaThirdCase}
        T_r^{1,2,3}(\vec m)&=\frac{1}{2r^{n-3}}\Biggl(\sum_{\substack{S\subseteq[n]\\
        \eqref{eq:ConditionStar}\\
        1\in S,\thickspace2,3\not\in S}}(-1)^{1+\abs{S}}\prod_{k=2}^{n-3}\Bigl(\Bigl(\sum_{i\in S}m_i\Bigr)-(\abs{S}-1)r-k\Bigr)\\
        &+\sum_{\substack{S\subseteq[n]\\
        \eqref{eq:ConditionStar2}\\
        2,3\in S,\thickspace1\not\in S}}(-1)^{\abs{S}}\prod_{k=1}^{n-4}\Bigl(\Bigl(\sum_{i\in S}m_i\Bigr)-(\abs{S}-1)r-k\Bigr)\nonumber\\
        &+\sum_{\substack{S\subseteq[n]\\1,2,3\not\in S}}(-1)^{1+\abs{S}}\prod_{k=1}^{n-4}\Bigl(\Bigl(\sum_{i\in S}m_i\Bigr)-(\abs{S}-1)r-k\Bigr)\Biggr).\nonumber
     \end{align} 
    From  \eqref{eq:ClosedFormulaLeftSideOfRecursionThirdCase} and \eqref{eq:T123ClosedFormulaThirdCase}, we see that \eqref{eq:Recursion} is equivalent to $$\sum_{S\subseteq\{2,4,\ldots,n\}}(-1)^{1+\abs{S}}\prod_{k=1}^{n-4}\Bigl(\Bigl(\sum_{i\in S}m_i\Bigr)-(\abs{S}-1)r-k\Bigr)=0,$$ which is a special case of Lemma \ref{lem:low degree vanishing}. This completes Case 3.

    \medskip
    
    \noindent\textbf{Case 4: $m_2+m_3\le r$ and $m_1+m_3\le r+1$.} In this case, if $S\subseteq [n]$ satisfies $1,3\not\in S$ or $2,3\not\in S,$ then $S$ must satisfy \eqref{eq:ConditionStar}. Similarly, if $2,3\in S$ or $1,3\in S$, then $S$ does not satisfy \eqref{eq:ConditionStar}. Thus \eqref{eq:ClosedFormulaLeftSideOfRecursion} simplifies to:
    \begin{align}\label{eq:ClosedFormulaLeftSideOfRecursionFourthCase}
        \widetilde w_r(\vec m)-\widetilde w_r(\vec m^*)&=\frac{n-3}{2r^{n-3}}\Biggl(\sum_{\substack{S\subseteq\{1,4,\ldots,n\}}}(-1)^{1+\abs{S}}\prod_{k=2}^{n-3}\Bigl(\Bigl(\sum_{i\in S}m_i\Bigr)-(\abs{S}-1)r-k\Bigr)\\
        &\quad-\sum_{\substack{S\subseteq\{2,4,\ldots,n\}}}(-1)^{1+\abs{S}}\prod_{k=1}^{n-4}\Bigl(\Bigl(\sum_{i\in S}m_i\Bigr)-(\abs{S}-1)r-k\Bigr)\Biggr),\nonumber
    \end{align}
    while \eqref{eq:ClosedFormulaRightSideOfRecursion} is equal to zero. Again, Lemma \ref{lem:low degree vanishing} implies that both sums in \eqref{eq:ClosedFormulaLeftSideOfRecursionFourthCase} are equal to zero, so \eqref{eq:Recursion} is satisfied, completing Case 4.

    We have now checked that $\widetilde w_r$ satisfies all of the conditions of Corollary \ref{lem:Reconstruct}, so we conclude $\widetilde w_r=w_r$.
\end{proof}

\section{Monotonicity of genus $0$ $r$-spin invariants}\label{sec:monotonicity}
The techniques introduced also imply that genus $0$ $r$-spin invariants satisfy a monotonicity property which is not obvious from the closed formula in the previous section. 

Recall the \emph{dominance} partial ordering on partitions of an integer $N$, where for partitions $\vec p=(p_1,p_2,\ldots)$ and $\vec p\thinspace'=(p_1',p_2',\ldots)$ of $N$ with $p_1\le p_2\le\cdots$ and $p_1'\le p_2'\le\cdots$, we say $\vec p\le\vec p\thinspace'$ if for all $i\ge0$ we have $p_1+\cdots+p_i\le p_1'+\cdots+p_i'$. That is, $\vec p\le\vec p\thinspace'$ if $\vec p$ is a ``more balanced distribution of $N$'' than $\vec p\thinspace'$. If $\vec p \le \vec p\thinspace'$, we say $\vec p\thinspace'$ \emph{dominates} $\vec p$.
\begin{remark}
    The dominance partial ordering is a fundamental structure of partitions (or more generally vectors of real numbers with fixed sum), and thus arises in many contexts inside (and outside) mathematics. See \cite{MarshallOlkinArnold} for a survey of such appearances. In algebraic geometry, the dominance partial ordering is known for (among other things) its connections to the geometry of Hilbert schemes of points on surfaces, see \cite{Nakajima}.
\end{remark}

\begin{thm}\label{thm:RespectsOrdering} Genus-zero $r$-spin invariants satisfy the following properties:
\begin{enumerate}
    \item For any $r$ and $n$, $w_r$ is a weakly order-reversing function on $n$-part partitions of $(n-2)(r+1)$ with respect to the dominance order. That is, if $\vec m$ and $\vec m'$ are two numerical $r$-spin monodromy vectors and $\vec m\le\vec m'$, then $w_r(\vec m)\ge w_r(\vec m')$.\label{it:Ordering}
    \item For any numerical $r$-spin monodromy vector $\vec m$, $w_r(\vec m)\ge0$, with $w_r(\vec m)\ne0$ if and only if $n-2\le m_i\le r-1$ for all $i$.\label{it:Positivity}
\end{enumerate}
\end{thm}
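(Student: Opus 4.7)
The plan is to prove parts (1) and (2) together by induction on the length $n$, with base cases $n=3,4$ immediate from Proposition \ref{prop:34Point}. The inductive step uses Theorem \ref{thm:Recursion} in two ways: first to deduce monotonicity at level $n$ from the inductive hypothesis at level $n-1$, and then to reduce positivity at level $n$ to a single evaluation at a distinguished extremal vector.

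For part (1), I invoke the classical fact that the dominance order is generated by elementary ``Robin Hood'' moves, i.e. $\vec m \le \vec m'$ precisely when $\vec m$ is obtained from $\vec m'$ by a sequence of replacements $(m_i', m_j') = (a,b) \mapsto (a-1, b+1)$ with $a > b+1$. It thus suffices to prove the inequality for a single such move. Applying Theorem \ref{thm:Recursion} to $\vec m'$ with these indices $i,j$ and an auxiliary $k \ne i,j$ gives
$$w_r(\vec m') - w_r(\vec m) = (n-3)\, T_r^{i,j,k}(\vec m'),$$
so the task reduces to showing $T_r^{i,j,k}(\vec m') \le 0$. The inequality $m_i' > m_j'$ immediately yields $\delta_{m_j' + m_k \ge r+1} \le \delta_{m_i' + m_k \ge r+2}$, ruling out the only sign pattern of the two deltas that could force $T > 0$. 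The case $(\delta_1, \delta_2) = (0,1)$ gives $T = -w_r(B)/r \le 0$ by inductive positivity at $n-1$. The remaining case has both deltas equal to $1$ and $T = (w_r(A) - w_r(B))/r$, where $A$ and $B$ are length-$(n-1)$ numerical vectors sharing $n-3$ entries and differing in a single two-element pair of equal sum: $A$ has $\{m_i' - 1,\ m_j' + m_k - r\}$ and $B$ has $\{m_j',\ m_i' + m_k - r - 1\}$. A direct computation shows $B$'s pair is obtained from $A$'s pair by $\min(m_i' - m_j' - 1,\ r - m_k)$ forward Robin Hood moves, so $B \le A$ in the dominance order on length-$(n-1)$ vectors, and the inductive monotonicity gives $w_r(B) \ge w_r(A)$.

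For part (2), the ``only if'' direction is immediate from Proposition \ref{prop:RamondVanishing} combined with Corollary \ref{cor:RSpinVanishing}. For ``if'' (nontrivial only when $r \ge n-1$), I claim that
$$\vec m_0 := (n-2,\, n-2,\, r-1,\, \ldots,\, r-1)$$
(two copies of $n-2$ and $n-2$ copies of $r-1$) is the unique maximum in dominance among numerical monodromy vectors whose entries all lie in $[n-2, r-1]$. Its characterization as the only such vector with all entries in $\{n-2, r-1\}$ follows from solving $a(n-2) + (n-a)(r-1) = (n-2)(r+1)$ for $a=2$; its dominance over every other vector in the range follows from iterating reverse Robin Hood moves, each of which can be arranged to stay within $[n-2, r-1]$. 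By the monotonicity established in part (1) at level $n$, it suffices to verify $w_r(\vec m_0) > 0$. Apply Theorem \ref{thm:Recursion} to $\vec m_0$ with $i=1, j=2, k=3$: the left-hand side collapses to $w_r(\vec m_0)$ because the vector $(n-3, n-1, r-1, \ldots, r-1)$ has an entry $n-3$ and vanishes by Corollary \ref{cor:RSpinVanishing}, and on the right the vector $B = (n-2, r-1, \ldots, r-1, n-4)$ also vanishes by the same Corollary. This leaves
$$w_r(\vec m_0) = \frac{n-3}{r}\, w_r(A), \qquad A = (n-3,\, r-1,\, \ldots,\, r-1,\, n-3),$$
a length-$(n-1)$ vector whose entries all lie in $[(n-1)-2, r-1]$, so inductive positivity at $n-1$ yields $w_r(A) > 0$.

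The main technical obstacle is the sign analysis of $T_r^{i,j,k}$ in the monotonicity step: one must carefully track the direction of Robin Hood relating the differing two-element multisets of $A$ and $B$, and then confirm that this comparison lifts to a comparison of the full length-$(n-1)$ vectors after sorting (which it does, since the shared entries are identical). A secondary subtlety is the combinatorial lemma that $\vec m_0$ is the unique dominance-maximum in the positive range, which reduces to checking that no numerical monodromy vector in $[n-2, r-1]$ has exactly one interior entry, so that reverse Robin Hood within the range is always available at any non-extremal vector.
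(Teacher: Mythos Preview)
Your proof is correct and part (1) follows essentially the same route as the paper's Lemma \ref{lem:cookieOrder}: reduce to a single elementary dominance move, apply Theorem \ref{thm:Recursion}, and control the sign of $T$ by comparing the two length-$(n-1)$ vectors via the inductive hypothesis. The paper packages the $\delta$-factors as genuine four-point invariants $w_r(\vec m^2), w_r(\vec m^4)$ and compares those instead, but this is cosmetic.

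Part (2) is where you diverge. The paper reduces to the whole family of vectors with $m_1=n-2$ and all entries in range, and in Lemma \ref{lem:SmallestInvariant} computes their invariant \emph{exactly} as $(n-3)!/r^{n-3}$ by the same recursion you use. You instead isolate the single vector $\vec m_0=(n-2,n-2,r-1,\ldots,r-1)$ as the dominance maximum of the positive range and only show $w_r(\vec m_0)>0$. Your approach trades the exact evaluation for the extra combinatorial claim that no in-range vector has exactly one interior entry (so reverse Robin Hood within the range is always available); this claim is correct, since solving $a(n-2)+(n-1-a)(r-1)+m_1=(n-2)(r+1)$ forces $a$ to lie strictly between $1$ and $2$ when $m_1$ is interior. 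The paper's route has the advantage of yielding the sharp lower bound $(n-3)!/r^{n-3}$ as a byproduct; yours has the advantage of needing positivity at only one explicit vector per $n$. Both are valid and of comparable length.
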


We prove the theorem in several steps, beginning with the following standard fact about the dominance order (see \cite[Prop. 2.3]{Brylawski}).
\begin{fact}\label{obs:GiveCookies}
    A partition $\vec p$ is dominated by $\vec p\thinspace'$ if and only if there is a finite sequence
$$\vec p\thinspace'=\vec p\thinspace^0,\vec p\thinspace^1,\ldots,\vec p\thinspace^k=\vec p,$$
so that $\vec p\thinspace^i$ is obtained from $\vec p\thinspace^{i-1}$ by replacing two parts $a,b\in\vec p\thinspace^{i-1}$, where $a<b-1$, with nonnegative integers $c =a+1$ and $d=b-1$, respectively. That is, $\vec p\thinspace^i$ and $\vec p\thinspace^{i+1}$ are neighbors and $\vec p\thinspace^{i+1}$ dominates $\vec p\thinspace^i$. Intuitively, a `richer' part ($b$) donates one `cookie' to a `poorer' part ($a$), but only if doing so would not make $b$ poorer than $a$.
\end{fact}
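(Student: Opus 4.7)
This is a classical fact about the dominance order; I would sketch the argument, deferring to \cite[Prop.~2.3]{Brylawski} for complete details.

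For the backward direction, I would reduce to a single cookie donation step and show that it produces a strictly more balanced partition, hence smaller in the dominance order. A clean way is through the equivalent characterization $\vec p \le \vec p\thinspace' \iff \sum_i \max(p_i - t, 0) \le \sum_i \max(p_i' - t, 0)$ for all integer thresholds $t$: the move $a, b \mapsto a+1, b-1$ (with $a < b - 1$) decreases this sum by exactly $1$ for each $t \in \{a+1, \ldots, b-1\}$ and leaves it unchanged otherwise, so each step strictly decreases in dominance and iteration gives $\vec p \le \vec p\thinspace'$.

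For the forward direction, my plan is to induct on an integer-valued distance such as $\sum_i \abs{p_i - p_i'}$ (after sorting both partitions consistently). The inductive step is to produce a single cookie donation on $\vec p\thinspace'$ whose output $\vec p\thinspace''$ satisfies $\vec p \le \vec p\thinspace'' < \vec p\thinspace'$ and is strictly closer to $\vec p$. To construct the move, I would sort both partitions in decreasing order, let $i$ be the smallest index at which they disagree (so $p_i < p_i'$ by dominance) and let $k > i$ be the smallest index with $p_k > p_k'$; this $k$ exists because the totals are equal. The bounds $p_i' \ge p_i + 1$, $p_k' \le p_k - 1$, and $p_i \ge p_k$ (from the sort order) combine to force $p_i' - p_k' \ge 2$, so the cookie donation on these two parts is legal.

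The main obstacle will be verifying that $\vec p \le \vec p\thinspace''$ continues to hold after the move and any re-sorting. The minimality of $i$ and $k$ ensures that at every intermediate index $i \le j < k$, the prefix sum of $\vec p\thinspace'$ strictly exceeds that of $\vec p$, so lowering exactly those prefix sums by $1$ (the only effect of the donation in this range) preserves the inequality. A further standard rearrangement observation --- sorting a sequence in decreasing order weakly increases all prefix sums --- ensures that re-sorting the outcome cannot destroy the inequality either. This finishes the inductive step and hence the proof.
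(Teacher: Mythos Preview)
The paper does not give its own proof of this fact; it simply records it as a standard result and cites \cite[Prop.~2.3]{Brylawski}. Your proposal is therefore consistent with the paper's treatment---you cite the same source---and goes further by sketching the classical argument, which is correct as written.
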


\begin{lemma}\label{lem:cookieOrder}
Suppose $\vec m$ and $\vec m'$ are numerical $r$-spin monodromy vectors. If $\vec m$ and $\vec m'$ are neighbors and $\vec m \ge \vec m'$, then $w_r(\vec m) \le w_r( \vec m')$.
\end{lemma}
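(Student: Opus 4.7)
The proof will proceed by strong induction on $n$, carried out jointly with Theorem~\ref{thm:RespectsOrdering} so that both parts may be assumed at all smaller lengths. The base case $n = 3$ is immediate from Proposition~\ref{prop:34Point}(1), since every $3$-point invariant equals $1$. For $n \ge 4$, using the symmetry of $w_r$ under permutations together with Fact~\ref{obs:GiveCookies}, we may assume
$\vec m = (m_1, m_2, m_3, \ldots, m_n)$ and $\vec m' = (m_1 + 1, m_2 - 1, m_3, \ldots, m_n)$ with $m_2 - m_1 \ge 2$ (so $m_1 \le r-2$ and $m_2 \ge 3$). Applying Theorem~\ref{thm:Recursion} with $i = 2$, $j = 1$, and any fixed $k \in \{3, \ldots, n\}$ yields
$$w_r(\vec m) - w_r(\vec m') = \frac{n-3}{r}\bigl(A - B\bigr),$$
where $A := \delta_{m_1 + m_k \ge r+1}\, w_r(\vec m_A)$, $B := \delta_{m_2 + m_k \ge r+2}\, w_r(\vec m_B)$, with $\vec m_A = (m_2 - 1, m_3, \ldots, \hat{m_k}, \ldots, m_n, m_1 + m_k - r)$ and $\vec m_B = (m_1, m_3, \ldots, \hat{m_k}, \ldots, m_n, m_2 + m_k - r - 1)$. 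It therefore suffices to prove $A \le B$.

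Observe that $(m_2 + m_k - r - 1) - (m_1 + m_k - r) = m_2 - m_1 - 1 \ge 1$, so $\delta_A = 1$ forces $\delta_B = 1$. If $\delta_A = 0$, then $A = 0$ while $B \ge 0$ by the inductive hypothesis (Theorem~\ref{thm:RespectsOrdering}(2) at length $n - 1$: nonnegativity at that length follows from part (1) at length $n-1$ and Proposition~\ref{prop:RamondVanishing}, since any numerical vector with some entry equal to $r$ is dominance-maximal and has vanishing Witten invariant). Hence $A - B \le 0$ in this case.

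In the remaining case $\delta_A = \delta_B = 1$, both $\vec m_A$ and $\vec m_B$ are valid numerical monodromy vectors of length $n - 1$ (the bounds $1 \le \cdot \le r$ on each entry are routine to verify using $m_1 \le r-2$, $m_2 \ge 3$, and $m_k \le r$). The two vectors agree except in two entries: $\{m_2 - 1,\, m_1 + m_k - r\}$ in $\vec m_A$ is replaced by $\{m_1,\, m_2 + m_k - r - 1\}$ in $\vec m_B$. Writing $u := m_2 - m_1 - 1 \ge 1$ and $v := r - m_k \ge 0$, these two pairs share a common sum and mean, while the $\vec m_A$-pair has spread $u + v$ and the $\vec m_B$-pair has spread $|u - v|$; in particular, the maximum of the pair has weakly decreased and the minimum has weakly increased. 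By a standard Hardy--Littlewood--P\'olya transfer argument, this single balancing replacement inside a multiset yields $\vec m_B \le \vec m_A$ in dominance order. The inductive hypothesis part (1) of Theorem~\ref{thm:RespectsOrdering} at length $n - 1$ then gives $w_r(\vec m_A) \le w_r(\vec m_B)$, i.e., $A \le B$, completing the induction.

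The main difficulty will be pinning down the dominance comparison $\vec m_B \le \vec m_A$ cleanly: one must verify that the localized balancing transfer between the two changed entries indeed yields dominance despite possible interaction with the common entries $m_3, \ldots, \hat{m_k}, \ldots, m_n$ when passing to sorted form. The other delicate point is the bootstrap between parts (1) and (2) of Theorem~\ref{thm:RespectsOrdering}, which forces a combined strong induction rather than an independent treatment of the two statements.
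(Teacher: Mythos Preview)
Your argument is correct and follows essentially the same strategy as the paper: both proofs run a joint strong induction with Theorem~\ref{thm:RespectsOrdering}, apply the recursion of Theorem~\ref{thm:Recursion} to a neighboring pair, and reduce to a dominance comparison between the two $(n-1)$-pointed monodromy vectors appearing in $T_r^{i,j,k}$, which is then settled by a single Robin~Hood/Pigou--Dalton transfer (your Hardy--Littlewood--P\'olya step, the paper's appeal to Fact~\ref{obs:GiveCookies}). The only cosmetic differences are that the paper takes $n=4$ as its base case and rewrites the indicator factors $\delta_{m_j+m_k\ge r+1}$ and $\delta_{m_i+m_k\ge r+2}$ as $4$-point invariants $w_r(\vec m^2),w_r(\vec m^4)$ via Proposition~\ref{prop:34Point}, whereas you keep them as $\delta$'s and handle the implication $\delta_A=1\Rightarrow\delta_B=1$ directly.
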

\begin{proof}
    We proceed by induction on $n$. The base case is $n=4$, which follows from Proposition \ref{prop:34Point}.

Fix $n$, and assume both statements hold for $k$-pointed monodromy vectors if $k<n$. Without loss of generality, write  $\vec{m}=(m_1,\ldots,m_n)$ and $\vec m' = (m_1 - 1, m_2 +1, m_3, \ldots, m_n)$ with $1<m_1\le m_2<r$ as $\vec m \le \vec m'$.  For $j\in\{4,\ldots,n\},$ we have by definition $$T_r^{1,2,j}(m_1,\ldots,m_n)=w_r(\vec m^1)w_r(\vec m^2)-w_r(\vec m^3)w_r(\vec m^4),$$ where:
\begin{align*}
    \vec m^1&=(m_1-1,m_3,\ldots,\hat{m_j},\ldots,m_{n},m_2+m_j-r)&\vec m^2&=(2,m_2,m_j,2r-m_2-m_j)\\
    \vec m^3&=(m_2,m_3,\ldots,\hat{m_j},\ldots,m_{n},m_1+m_j-r-1)&\vec m^4&=(2,m_1-1,m_j,2r+1-m_1-m_j).
\end{align*}

Note first that $$
    (m_1-1)+(m_2+m_j-r)=(m_1+m_j-r-1)+(m_2)$$
    and $$m_2>\max\{m_1-1,m_2+m_j-r\} \ge \min\{m_1-1,m_2+m_j-r\} > m_1 +m_j -r - 1.$$
 Thus there is a finite sequence like that in Fact~\ref{obs:GiveCookies} between $\vec m^3$ and $\vec m^1$.
This proves $\vec m^1\le\vec m^3$, so by the inductive hypothesis, we have 
\begin{align}\label{eq:m1m3}
    w_r(\vec m^1)\ge w_r(\vec m^3).
\end{align}

Next, we prove $w_r(\vec m^2)\ge w_r(\vec m^4)$. As both are 4-point invariants with an insertion of 2, both are either $0$ or $\tfrac{1}{r}$. We need only to show that if $w_r(\vec m^2) = 0$ then $w_r(\vec m^4) = 0$. 
Indeed, using Proposition~\ref{prop:34Point} and the fact that $m_2 < r$,  $w_r(\vec m^2)=0$ implies that either $2r-m_2-m_j\ge r$ or $m_j = 1$, which in turn implies either $2r+1-m_1-m_j\ge r$ or $m_j = 1$. Thus $w_r(\vec m^4)=0$. In conclusion,
\begin{align}\label{eq:m2m4}
    w_r(\vec m^2)\ge w_r(\vec m^4).
\end{align}

By the inductive hypothesis, the expressions $w_r(\vec m^1),$ $w_r(\vec m^2),$ $w_r(\vec m^3),$ and $w_r(\vec m^4)$ are nonnegative, so \eqref{eq:m1m3} and \eqref{eq:m2m4} imply $T_r^{1,2,j}\ge0$.
Thus 
\begin{align}\label{eq:InequalityOneStep}
    w_r(m_1,\ldots,m_n)&\ge w_r(m_1-1,m_2+1,m_3,\ldots,m_n).\qedhere
\end{align}
\end{proof}

\begin{lemma}\label{lem:SmallestInvariant}
    Let $\vec m=(m_1,\ldots,m_n)$ be a monodromy vector with $m_1=n-2$ and $n-2\le m_i\le r-1$ for all $i$. Then $w_r(\vec m)=\frac{(n-3)!}{r^{n-3}}$.
\end{lemma}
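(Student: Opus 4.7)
The plan is to prove this by induction on $n$, with base case $n = 3$ following immediately from Proposition~\ref{prop:34Point}(1), as $\frac{(n-3)!}{r^{n-3}} = 1$ when $n = 3$.

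For the inductive step with $n \ge 4$, I will apply Theorem~\ref{thm:Recursion} with $i = 1$ and $j = 2$, which is valid since $m_1 = n-2 \ge 2 > 1$ and $m_2 \le r - 1 < r$. The modified vector $(m_1 - 1, m_2 + 1, m_3, \ldots, m_n)$ contains the entry $m_1 - 1 = n - 3$, so by Corollary~\ref{cor:RSpinVanishing} its $r$-spin invariant is zero, and \eqref{eq:Recursion} collapses to
\begin{equation*}
w_r(\vec m) = (n - 3) \cdot T_r^{1,2,k}(\vec m)
\end{equation*}
for any choice of $k \in \{3, \ldots, n\}$.

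Next, I will analyze the two $(n-1)$-pointed invariants appearing in $T_r^{1,2,k}$. The subtracted one contains the entry $m_1 + m_k - r - 1 = n - 3 + m_k - r$, which is at most $n - 4$ since $m_k \le r - 1$, and hence vanishes by Corollary~\ref{cor:RSpinVanishing} applied with $n$ replaced by $n - 1$. For the remaining invariant, I will use the elementary bound
\begin{equation*}
m_j + m_k \;\ge\; (n-2)r - (n-3)(r-1) \;=\; r + n - 3,
\end{equation*}
which holds for every pair of distinct $j, k \in \{2, \ldots, n\}$ because $\sum_{i \ne 1} m_i = (n-2)r$ and each $m_i \le r - 1$. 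This bound simultaneously activates the delta $\delta_{m_2 + m_k \ge r + 1}$ and guarantees that the new insertion $m_2 + m_k - r$ lies in $[n - 3,\, r - 2] \subseteq [(n-1)-2,\, r-1]$.

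With those inequalities in hand, the remaining $(n-1)$-pointed invariant $w_r(n - 3,\, (m_a)_{a \ne 1,2,k},\, m_2 + m_k - r)$ has all entries in $[n - 3, r - 1]$, total sum $(n-3)(r+1)$, and a distinguished entry equal to $(n-1) - 2 = n - 3$, so the inductive hypothesis applies and yields the value $\frac{(n-4)!}{r^{n-4}}$. Assembling the formula gives
\begin{equation*}
w_r(\vec m) \;=\; (n-3) \cdot \tfrac{1}{r} \cdot \tfrac{(n-4)!}{r^{n-4}} \;=\; \tfrac{(n-3)!}{r^{n-3}},
\end{equation*}
as desired. I expect the main obstacle to be establishing the uniform inequality $m_j + m_k \ge r + n - 3$ and then verifying that the new insertion $m_2 + m_k - r$ falls in exactly the range needed to invoke the inductive hypothesis; everything else reduces to routine applications of Theorem~\ref{thm:Recursion} together with the vanishing in Corollary~\ref{cor:RSpinVanishing}.
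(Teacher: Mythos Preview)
Your argument is correct and follows essentially the same induction as the paper's proof, which differs only in taking $n=4$ as the base case and fixing $k=3$ throughout. One small imprecision: when $n=4$ your appeal to Corollary~\ref{cor:RSpinVanishing} for the subtracted term is not valid (that corollary requires at least four marked points), but the term vanishes anyway because $m_1+m_k=(n-2)+m_k\le r+1$ forces $\delta_{m_1+m_k\ge r+2}=0$.
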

\begin{proof}
    We proceed by induction on $n,$ with base case $n=4$ following from Proposition \ref{prop:34Point}. 

    For $n>4,$ we apply Theorem \ref{thm:Recursion} to $\vec m$, with $(i,j,k)=(1,2,3)$. Since $m_1=n-2,$ by Corollary \ref{cor:RSpinVanishing}, $w_r(m_1-1,m_2+1,m_3,\ldots,m_n)=0$, so Theorem \ref{thm:Recursion} reads $$w_r(\vec m)=(n-3)T^{1,2,3}_r(\vec m).$$ We have $$m_2+m_3-r=(n-2)(r+1)-(n-2)-r-\sum_{i=4}^nm_i\ge(n-2)(r+1)-(n-2)-r-(n-3)(r-1)=n-3,$$ and $$m_1+m_3-r-1\le (n-2)+(r-1)-r-1=n-4,$$ which implies $$T^{1,2,3}_r(\vec m)=\frac{1}{r}(1\cdot w_r(m_1-1,m_4,\ldots,m_n,m_2+m_3-r)-0)=\frac{1}{r}\cdot\frac{(n-4)!}{r^{n-4}}$$ by induction. Thus $w_r(\vec m)=(n-3)T^{1,2,3}_r(\vec m)=\frac{(n-3)!}{r^{n-3}}.$
\end{proof}

\begin{proof}[Proof of Theorem~\ref{thm:RespectsOrdering}]
Fix an $n$-pointed $r$-spin monodromy vector $\vec m'$ with $\vec m'\ge\vec m$. By Fact \ref{obs:GiveCookies}, there is a finite sequence $\vec m'=\vec m\thinspace^0>\vec m\thinspace^1>\cdots>\vec m\thinspace^k=\vec m$ of neighbors, where $\vec m\thinspace^{i}$ is dominated by $\vec m\thinspace^{i+1}$. By Lemma \ref{lem:cookieOrder}, $w_r(\vec m\thinspace^{i})\le w_r(\vec m\thinspace^{i+1})$ for all $i$, so we conclude $W_r(\vec m)\ge W_r(\vec m')$. This proves statement \eqref{it:Ordering}.

Applying Lemma \ref{lem:cookieOrder} repeatedly,
we have $$w_r(\vec m)-w_r(m_1-k,m_2+k,m_3,\ldots,m_n)\ge0$$ for any $k\le\min\{m_1-1,r-m_2\}$. Taking $k=\min\{m_1-1,r-m_2\}$ gives $w_r(\vec m)\ge0$. This proves the first half of statement \eqref{it:Positivity}.

For the second half of statement \eqref{it:Positivity}, the `if' direction follows from Proposition \ref{prop:RamondVanishing} and Corollary \ref{cor:RSpinVanishing}. For the `only if' direction, suppose $\vec m=(m_1,\ldots,m_n)$ is a monodromy vector such that $n-2\le m_i\le r-1$ for all $i$, with $m_1\le\cdots\le m_n$. It is easy to check using Fact \ref{obs:GiveCookies} that $\vec m$ is dominated by a monodromy vector $\vec m'$ with $m_1=n-2$ and $n-2\le m_i\le r-1$ for $i\in\{2,\ldots,n\}$. (Roughly, $m_1$ repeatedly `donates a cookie' to a part $m_i<r-1$; such a part is guaranteed to exist by $\sum_im_i=(n-2)(r+1),$ and $\vec m'\ge\vec m$ follows from the assumption that $m_1\le m_2\le\cdots\le m_n$.) Thus by statement \eqref{it:Ordering} proved just above, it is sufficient to prove $w_r(\vec m)>0$ in the case where $m_1=n-2$ and $n-2\le m_i\le r-1$ for all $i$. This is immediate from Lemma \ref{lem:SmallestInvariant}.
\end{proof}

\bibliography{GZeroRevised.bbl}
\bibliographystyle{amsalpha}
\end{document}